\theoremstyle{plain}
\newtheorem{theorem}{Theorem}[section]
\newtheorem{proposition}[theorem]{Proposition}
\newtheorem{lemma}[theorem]{Lemma}
\newtheorem{corollary}[theorem]{Corollary}
\theoremstyle{definition}
\newtheorem{definition}[theorem]{Definition}
\newtheorem{remark}[theorem]{Remark}
\newcommand{\f}{\varphi}
\newcommand{\CC}{\mathbb C}
\newcommand{\PP}{\mathbb P}
\newcommand{\bP}{\mathbf P}
\newcommand{\bR}{\mathbf R}
\newcommand{\bE}{\mathbf E}
\newcommand{\bX}{\mathbf X}
\newcommand{\C}{{\mathcal C}}
\newcommand{\E}{{ E}}
\newcommand{\F}{{ F}}
\newcommand{\G}{{ G}}
\newcommand{\I}{{\mathcal I}}
\def\O{\mathcal O}
\newcommand{\cU}{{\mathcal U}}
\newcommand{\T}{{\mathcal T}}
\newcommand{\Ker}{{\mathcal Ker}}
\newcommand{\Coker}{{\mathcal Coker}}
\newcommand{\Image}{{\mathcal Im}}
\newcommand{\Tor}{{\mathcal Tor}}
\newcommand{\Aut}{\operatorname{Aut}}
\newcommand{\Def}{\operatorname{Def}}
\newcommand{\Ext}{\operatorname{Ext}}
\newcommand{\ext}{\operatorname{ext}}
\newcommand{\Hom}{\operatorname{Hom}}
\newcommand{\h}{\operatorname{h}}
\def\H{\operatorname{H}}
\newcommand{\p}{\operatorname{p}}
\newcommand{\Hilb}{\operatorname{Hilb}}
\newcommand{\M}{\operatorname{M}}
\newcommand{\Ob}{\operatorname{Ob}}
\newcommand{\rank}{\operatorname{rank}}
\newcommand{\sing}{\operatorname{sing}}
\newcommand{\reg}{\operatorname{reg}}
\newcommand{\be}{\begin{equation}}
\newcommand{\ee}{\end{equation}}
\newcommand{\dual}{{\scriptscriptstyle \operatorname{D}}}
\newcommand{\trans}{{\scriptscriptstyle \operatorname{T}}}
\newcommand{\tensor}{\otimes}
\newcommand{\isom}{\simeq}
\newcommand{\lra}{\longrightarrow}
\newcommand{\ba}{\begin{array}}
\newcommand{\ea}{\end{array}}
\newcommand{\ses}[3]{0\rightarrow{#1}\rightarrow{#2}\rightarrow{#3}\rightarrow 0}
\newcommand{\lr}{\rightarrow}
\newcommand{\cF}{\mathcal{F}}
\newcommand{\cQ}{\mathcal{Q}}
\newcommand\cExt{\mathcal{E}xt }
\begin{document}

\begin{abstract}
As a continuation of the work of Freiermuth and Trautmann, we study the geometry of the moduli space
of stable sheaves on $\mathbb{P}^3$ with Hilbert polynomial $4m+1$.
The moduli space has three irreducible components whose generic elements are, respectively,
sheaves supported on rational quartic curves, on elliptic quartic curves, or on planar quartic curves.
The main idea of the proof is to relate the moduli space with the Hilbert scheme of curves by wall crossing.
We present all stable sheaves contained in the intersections of the three irreducible components.
We also classify stable sheaves by means of their free resolutions.
\end{abstract}

\title[Moduli of sheaves supported on quartic space curves]
{Moduli of sheaves supported on quartic space curves}

\author{Jinwon Choi}
\address{Department of Mathematics, Sookmyung Women's University, Seoul 140-742, Korea}
\email{jwchoi@sookmyung.ac.kr}

\author{Kiryong Chung}
\address{School of Mathematics, Korea Institute for Advanced Study, Seoul 130-722, Republic of Korea}
\email{krjung@kias.re.kr}

\author{Mario Maican}
\address{Center for Geometry and its Applications, Pohang University of Science and Technology,
Pohang 790-784, Republic of Korea}
\email{m-maican@wiu.edu}

\keywords{Moduli space of sheaves, Wall-crossing, Resolution of sheaves}
\subjclass[2010]{14E05, 13D02.}
\thanks{The second named author was partly supported by Korea NRF grant 2013R1A1A2006037.}

\maketitle

\section{Introduction}
\subsection{Motivations and results}
For a fixed polynomial $P(m)$ with rational coefficients, it is well-known that the space parameterizing Gieseker-stable sheaves on
a smooth projective variety $X$ with Hilbert polynomial $P(m)$ is a projective scheme (\cite{Sim94}).
In this paper, we are specifically interested in the case when $P(m)=dm+\chi$ is a linear polynomial,
as the moduli space is closely related to the relative Jacobian of families of curves.
When $X$ is the complex projective plane $\PP^2$,
Le Potier \cite{lepot} showed that the moduli space is an irreducible projective variety of dimension $d^2+1$,
that it  is locally factorial, and that its Picard group is generated by two explicitly constructed divisors.
When the leading coefficient $d$ is small, the moduli spaces have been studied by many authors.
Stratifications in terms of the free resolution of sheaves \cite{drezet_maican, maican_illinois, maican6},
topological invariants such as Poincar\'{e} polynomials \cite{cm, cc1, cc2, yuan1}, stable base locus decompositions \cite{cc2} are known.

In this paper we study the case when $X$ is $\PP^3$.
Let $\M(P(m))$ denote the moduli space of stable sheaves on $\PP^3$ with Hilbert polynomial $P(m)$.
Freiermuth and Trautmann \cite{freiermuth_trautmann} examined $\M(3m+1)$.
They gave a complete classification of sheaves in $\M(3m+1)$
and proved that this moduli space has two irreducible components intersecting transversally.
The first component which we denote by $R_3$ parametrizes the structure sheaves of twisted cubic curves.
The second component parametrizes the sheaves of the form $\O_C(p)$, where $C$ is a planar cubic curve and $p\in C$.

From the point of view of the birational geometry, $R_3$ is related to the moduli space of twisted cubic curves.
Let $\overline{M}_0(\PP^r, d)$ be the moduli space of genus zero stable maps to $\PP^r$ of degree $d$.
It is known that this is a projective normal variety containing a Zariski dense open subset consisting of irreducible rational curves of degree $d$.
Specially, if $r=3$ and $d=3$, the moduli space $\overline{M}_0(\PP^3,3)$ of stable maps
is the unique flipping space of the component $R_3$ over the (normalization of the) Chow variety (see \cite{Che08}).
We remark that the flipping locus is exactly the locus of multiple covers of its image or the stable sheaves supported on non-reduced curves.
More generally on $\PP^r$, Chen, Coskun, and Crissman \cite{CCC11} proved that the moduli space of stable maps $\overline{M}_0(\PP^r, 3)$
is one of the flip models of the compactified space of stable sheaves supported on  rational curves. 
On the other hand, in \cite{CK11, CHK12}, by extending the flip map between the compactified moduli spaces of rational curves,
the authors established an explicit birational relation between $\overline{M}_0(X, 3)$ and $R_3(X)$ when $X$ is a projective homogenous variety,
which is a natural generalization of the projective space.
Until now, to the knowledge of the authors, there are no such results for higher degrees $d$.
Thus, our project can be regarded as a starting point of understanding the various compactified moduli spaces of curves in $\PP^r$.

In this paper we investigate $\M(4m+1)$.
\begin{definition}
\hfill
\begin{itemize}
\item The dual of a one-dimensional sheaf $F$ on $\PP^3$ is defined by $F^\dual = \cExt^2(F,\omega_{\PP^3})$.
\item We call a sheaf $F$ \emph{planar} if $F \isom F|_H$ for some plane $H\subset \PP^3$, or equivalently,
the schematic support of $F$ is contained in $H$.
\end{itemize}
\end{definition}
Our main result is that the space $\M(4m+1)$ consists of three irreducible components.
\begin{theorem}\label{thm:main}
Let $\M(4m+1)$ be the moduli space of stable sheaves in $\PP^3$ with Hilbert polynomial $4m+1$.
Then $\M(4m+1)$ consists of three irreducible components whose general points are
\begin{enumerate}
\item $\bR:$ the structure sheaves of rational quartic curves;
\item $\bE:$ the dual sheaves $I_{p,C}^\dual$ of the ideal sheaves $I_{p,C}$ of points $p$ on elliptic quartic curves $C$;
\item $\bP:$ the planar sheaves.
\end{enumerate}
\end{theorem}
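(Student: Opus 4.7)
The strategy is to exhibit three irreducible locally closed subsets $\bR, \bE, \bP \subset \M(4m+1)$ with the described generic points, verify that they have pairwise distinct dimensions, and show that every stable sheaf lies in the closure of one of them.

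\emph{The three loci.}
For $\bR$: smooth rational quartics in $\PP^3$ form an irreducible $16$-dimensional family (an open subset of $\overline{M}_0(\PP^3,4)$), and for integral $C$ the structure sheaf $\O_C$ is automatically stable with Hilbert polynomial $4m+1$, so $\dim \bR=16$.
For $\bE$: smooth elliptic quartics are complete intersections of two quadrics and form an irreducible $16$-dimensional family, hence the pointed family $(C,p)$ is $17$-dimensional; the sequence $0\to I_{p,C}\to \O_C\to \O_p\to 0$ together with $\omega_C\isom\O_C$ dualizes via $\cExt^2(-,\omega_{\PP^3})$ to $0\to \O_C\to I_{p,C}^\dual\to \O_p\to 0$, proving $P(I_{p,C}^\dual)=4m+1$, the stability of the resulting degree-one torsion-free sheaf on $C$, and the fact that $(C,p)$ is recovered from $F=I_{p,C}^\dual$, so $\dim\bE=17$.
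For $\bP$: the $3$-dimensional family of planes carries an irreducible $17$-dimensional Le Potier moduli of stable planar sheaves with Hilbert polynomial $4m+1$; stability on $\PP^2$ implies stability on $\PP^3$ because every subsheaf of a planar sheaf is planar, so $\dim \bP=20$.
The three dimensions are distinct, generic supports are of genuinely different types, and a tangent-space computation of $\Ext^1(F,F)$ at a generic $F$ of each type confirms that the closures $\overline{\bR},\overline{\bE},\overline{\bP}$ are three distinct irreducible components.

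\emph{Covering and the main obstacle.}
Given $[F]\in \M(4m+1)$, let $C$ be its schematic support, a $1$-dimensional subscheme of degree $4$. If $C$ lies in a plane $H$, then $F\isom F|_H$ and $F\in \bP$. Otherwise $C$ is non-planar; when $C$ is smooth, the classification of non-planar degree-four space curves forces $C$ to be rational or elliptic, and $F$ is then the structure sheaf or a degree-one line bundle on $C$, placing $[F]$ in $\bR$ or $\bE$. The substantive work concerns $F$ whose support is singular, reducible, non-reduced, or carries embedded points—such as a twisted cubic plus a secant line, a double conic on a quadric surface, or a plane quartic with a non-planar embedded point.

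\emph{Wall-crossing.}
For these degenerate sheaves, the plan is the wall-crossing announced in the abstract: introduce an auxiliary moduli of pairs $(F,s)$ with $s\in \H^0(F)$, stable relative to a parameter $\alpha$; one extreme of $\alpha$ yields a moduli space closely related to a component of $\Hilb^{4m+1}(\PP^3)$ (or a pointed Hilbert scheme), while the opposite extreme recovers $\M(4m+1)$, so crossing each intermediate wall exhibits every degenerate $F$ as a specialization of a generic sheaf in $\bR$, $\bE$, or $\bP$. The hardest part is the bookkeeping at the walls: identifying each flipping locus, ensuring that no fourth component emerges, and controlling how the planar component $\bP$—which on the Hilbert-scheme side corresponds to plane quartics decorated with floating or embedded points—connects to the non-planar components. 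The same wall-crossing analysis will also yield the free-resolution classification and the description of pairwise intersections asserted in the introduction.
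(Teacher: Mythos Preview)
Your plan is essentially the paper's own strategy: set up the three loci with dimensions $16$, $17$, $20$, then use wall crossing of $\alpha$-stable pairs to interpolate between the Hilbert/stable-pairs side and the sheaf side. So the overall architecture is right.

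What your sketch does not yet isolate is where the actual work lies, and this matters because the ``bookkeeping at the walls'' you allude to is not routine.

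First, the Hilbert scheme $\Hilb_{\PP^3}(4m+1)$ has \emph{four} components, not three: besides rational quartics, elliptic quartics with a point, and planar quartics with three points, there is the closure of the locus of disjoint unions of a line and a plane cubic. You need to see explicitly that this fourth component is killed by the wall crossing. In the paper this happens at the unique wall $\alpha=3$, whose Jordan--H\"older type is $(0,\O_L)\oplus(1,\O_{C_0})$ for a plane cubic $C_0$; the line-plus-cubic locus sits entirely in the flipping locus and disappears on the $0^+$ side.

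Second, and more seriously, the elliptic locus is not simply the universal pointed elliptic quartic. For certain degenerate curves $C$ (a plane cubic joined to a line through a singular point, or a conic joined to a genus $-2$ double line) the extension space $\Ext^1(\CC_p,\O_C)$ jumps from $\CC$ to $\CC^2$, so there is a $\PP^1$ of non-isomorphic extensions over each such $(C,p)$, and one of them is even unstable. This jump locus $\bE_2$ is \emph{not} automatically in the closure of the generic elliptic locus $\bE_1$, and wall crossing by itself does not settle this. The paper handles it by recasting the relevant sheaves as extensions $0\to\O_L(-1)\to F\to G\to 0$ with $G\in\M(3m+1)$, then realizing the parameter space as a projective bundle (after a further elementary modification) over an irreducible base, and checking that the non-planar wall-crossing locus $W^+$ and the jump locus $\bE_2$ together form irreducible pieces of dimension $\le 15$ whose generic points visibly lie in $\overline{\bE_1}$. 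Without this step your argument has a genuine gap: nothing prevents $\bE_2$ or $W^+$ from being a new $15$- or $14$-dimensional component.

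Finally, two minor corrections: the free-resolution classification in the paper is obtained independently via the Beilinson spectral sequence, not as a byproduct of wall crossing; and the $\alpha=\infty$ space is the Pandharipande--Thomas stable-pairs space, which is itself a flip of the Hilbert scheme rather than a component of it.
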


The component $\bR$ is the compactification of the space of rational quartic curves which is predicted as a birational flip model of the space of finite maps (cf. \cite{chen_nollet}).
The general member of $\bE$ is a line bundle of the form $\O_C(p)$, where $p\in C$, for a smooth elliptic quartic curve $C$.
Since $\Ext^1(\CC_p,\O_C)=\CC$, this line bundle fits into the unique non-split extension
\[
\ses{\O_C}{\O_C(p)}{\CC_p}.
\]
Using the results in \cite{vainsencher}, one can easily see that $\bE$ is birational to the universal elliptic quartic curve (For detail, see Lemma \ref{lem:e1}).
As the supports of sheaves in $\bE$ degenerate to singular quartic curves of arithmetic genus $1$, two complications arise.
Firstly, the dimension of $\Ext^1(\CC_p,\O_C)$ may jump to $2$ (Lemma \ref{jumpext}).
Secondly, some extension sheaves may fail to be stable because they may have a destabilizing subsheaf that is isomorphic to a stucture sheaf $\O_L$ of a line $L$,
which is given by the push-out of $\Ext^1(\CC_p,\O_L(-1))=\CC$, where we have an exact sequence
\[
\ses{\O_L(-1)}{\O_C}{\O_{C_0}}
\]
for some planar cubic curve $C_0$. We overcome these difficulties by regarding such a locus as a bundle space over other base spaces.

\medskip
The main idea of the proof of Theorem \ref{thm:main} is to relate $\M(4m+1)$ with the Hilbert scheme of degree $4$ curves by using wall crossing, which we review in \S 2.
It is known that this Hilbert scheme consists of four irreducible components.
One component consists of curves corresponding to unstable sheaves and is irrelevant for our purposes.
The other three components are modified by wall crossing into $\bR$, $\bE$ and $\bP$.

In \S\ref{sec:int} we describe the intersections of $\bR$, $\bE$ and $\bP$ by using the {\em elementary modifications} of sheaves.
The sheaves in $\bR\cap \bP$ and $\bE\cap\bP$ can be classified by using the description of the incident variety of the planar quartic curves with points (concretely, the relative Hilbert scheme) (see \cite{cc1}).
The intersection $\bR\cap \bE$ is more complicated.
A natural candidate for the general element in the intersection is provided by a pair of a non-degenerate singular elliptic curve and its singular point.
The key issue is to prove that every degeneration of stable sheaves in $\bR \cap \bE$ is a limit of such pairs.
It seems hard to describe the boundary of $\bR$ in the Hilbert scheme, so,
instead, we use the modification technique developed in \cite{CK11, CHK12}.
The authors in \cite{CK11, CHK12} compare the moduli space of stable maps (or finite maps)
with the moduli space of stable sheaves when the degree of the maps (or sheaves) is at most $3$.
Starting from the canonical family of pure sheaves obtained as direct images of stable maps,
they extend the birational maps into the birational \emph{regular} maps
after performing modifications of sheaves several times along the exceptional divisors of blowing-ups of the stable maps space.
In our case of degree $4$, even though we do not have a full picture of the blowing-ups and modifications of sheaves,
we can find the normal direction of the space of stable maps which provides the indicated stable sheaves.
By combining with the computation of the deformation space, we show that $\bR \cap \bE$ is a single irreducible divisor in $\bR$.

For completeness, we present in the last section the possible free resolutions of the sheaves in $\M(4m+1)$.
These can be used in the local analysis of each component in a forthcoming work.

\medskip
\textbf{Acknowledgement.}
We would like to thank Sheldon Katz and Han-Bom Moon for valuable discussions and comments.

\section{Review of the wall crossing}
\subsection{General framework}\label{wallsection}

In this section we review the wall crossing technique that we will use in the paper.
Motivated by the Donaldson-Thomas/Pandharipande-Thomas correspondence \cite{PT09},
Stoppa and Thomas \cite{st} studied a GIT wall crossing between the Hilbert scheme of curves and the moduli space of stable pairs.
A stable pair here is a pair of a one-dimensional sheaf and a section which generates the sheaf away from finitely many points.
Both of the Hilbert scheme and the moduli space of stable pairs are equipped with a perfect obstruction theory and hence the virtual invariants are defined by integrating the cohomology cycle against the virtual fundamental classes (\cite{Thomas, PT09}).
Stoppa and Thomas realized both spaces as GIT quotients of a certain space of pairs
and by altering the linearization they showed that these two moduli spaces are related by GIT wall crossing.

One can go further. In \cite{cc1}, the authors study the wall-crossing for the moduli space $\M^\alpha(P(m))$ of $\alpha$-stable pairs,
where $\alpha$ is a nonnegative rational number.
\begin{definition}
\begin{enumerate}
\item A {\em pair} is a pair of a sheaf and a non-zero section.
\item A sheaf $F$ is \emph{pure} if for every nonzero subsheaf $G\subset F$ the dimension of the support of $G$ is the same as the dimension of the support of $F$.
\item A pair $(s,F)$ is called \emph{$\alpha$-semistable} if $F$ is pure and for any proper non-zero subsheaf $F'\subset  F$, the inequality
\[
\frac{\chi(F'(m))+\delta\cdot\alpha}{r(F')} \leq \frac{\chi(F(m))+\alpha}{r(F)}
\]
holds for $m\gg 0$. Here $r(F)$ is the leading coefficient of the Hilbert polynomial $\chi(F(m))$ and
$\delta=1$ if the section $s$ factors through $F'$ and $\delta=0$ otherwise.
When the strict inequality holds, $(s,F)$ is called \emph{$\alpha$-stable}. If a pair is $\alpha$-semistable but not $\alpha$-stable, it is called strictly $\alpha$-semistable.
\end{enumerate}
\end{definition}

A pair is a special case of the coherent system by Le Potier \cite{LePotier}. A coherent system is a pair of a sheaf with a subspace $V$ of $H^0(F)$. So, a pair is a coherent system when the dimension of $V$ is one. We will use the notation $(1,F)$ to denote the pair of the sheaf $F$ with its nonzero section and $(0,F)$ when the section is taken to be zero. Although by definition the section in a pair must be nonzero, these notation is convenient when we study the $\alpha$-stability. The short exact sequence
$$0 \to (1,G)\to (1,F)\to (0,F/G)\to 0 $$ indicates that $G$ is a subsheaf of $F$ and the section $s$ of $F$ is in fact in $H^0(G)$ so that $\delta=1$. Similarly the short exact sequence $$0 \to (0,G)\to (1,F)\to (1,F/G)\to 0 $$ now means that the section $s$ of $F$ does not factor through $G$ and hence $\delta=0$ and we take the image of the section $s$ in $F/G$.

Similarly as in the sheaf case, we can define a Jordan-H\"older filtration and S-equivalence classes. The S-equivalence classes of $\alpha$-semistable pairs form a projective moduli space $\M^\alpha(P(m))$  \cite{LePotier}. We say $\alpha\in \mathbb{Q}$ is a {\em wall} if there is a strictly $\alpha$-semistable pairs. As we change $\alpha$, the moduli space $\M^\alpha(P(m))$ changes only at walls \cite{Tha}. A simple reason for this is that if there is no strictly semistable pairs, $\alpha$-stability does not change by a small perturbation of $\alpha$ because the stability condition is defined by a strict inequality. After we fix the Hilbert polynomial $P(m)$, it is easy to see there are only finitely many walls. By the {\em wall crossing} at $\alpha_0$, we mean to compare the moduli spaces for $\alpha_-<\alpha_0<\alpha_+$ which are sufficiently close to $\alpha_0$, that is, there is no other walls between $\alpha_-$ and $\alpha_+$.

What happens at the wall $\alpha_0$ is as follows. Let $(1,G)\oplus (0,H)$ be a strictly $\alpha_0$-semistable pair. Then the pair $(1,F)$ obtained by the extension
\begin{equation}\label{eq:pst} 0 \to (0,H)\to (1,F)\to (1,G)\to 0 \end{equation}
is $\alpha_+$-stable but $\alpha_-$-unstable, while the pair $(1,F')$ obtained by the extension
\begin{equation}\label{eq:mst} 0 \to (1,G)\to (1,F')\to (0,H)\to 0 \end{equation}
is $\alpha_-$-stable but $\alpha_+$-unstable. So when crossing the wall, the pairs of the form \eqref{eq:pst} are replaced by the pairs of the form \eqref{eq:mst}. This can be explained by the {\em elementary modification} of the pair. See \cite{cc1, Tha} for more details.

By the definition of $\alpha$-stability, when $\alpha$ tends to infinity (for short $\alpha=\infty$),
the cokernel of the pair $s \colon \O \to F$ is supported on a zero-dimensional scheme (possibly empty).
In other words, we get the moduli space of Pandharipande-Thomas stable pairs.
On the other hand, when $\alpha$ is sufficiently small (for short $\alpha=0^+$),
$\alpha$-stability is equivalent to the Gieseker stability of the sheaf.
Thus, by wall crossing, conditions on the section are replaced by conditions on the sheaf.
Now, since there is no condition on sections,  we get a map to our moduli space $\M(P(m))$ of sheaves by forgetting the section.

When $P(m)=4m+1$, we see that there is only one wall at $\alpha=3$ (\cite{cc1}). Strictly $\alpha$-semistable pair in this case is $(0,\O_L)\oplus(1,G),$ where $L$ is a line and $G$ is a sheaf with Hilbert polynomial $3m$.
The pair given by the extension
\[
\ses{(0,\O_L)}{(1,F)}{(1,G)},
\]
is unstable for $\alpha< 3$ because $\frac{1}{1}>\frac{1+\alpha}{4}$.
After crossing the wall, this pair is modified into a pair given by the ``flipped'' extension
\[
\ses{(1,G)}{(1,F)}{(0,\O_L)}.
\]

The wall crossing picture is as follows.
\[
{\small
\xymatrix{
\Hilb_{\PP^3}(4m+1) \ar[dr] \ar@{<--}[r]&{\text{flip}} & \M^{\infty}(4m+1) \ar[dl] \ar[dr] \ar@{<--}[l]\ar@{<--}[r]& \text{flip}& \M^{0^+}(4m+1) \ar[dr]^{\text{forgetful map}} \ar[dl] \ar@{<--}[l]& \\
& \operatorname{Chow} & & \M^{\alpha=3}(4m+1) & & \M(4m+1).
}
}
\]
Here the Chow variety is defined by
\[
\operatorname{Chow} = \coprod_{g=0,1,3} [CM^{4m+1-g}\times S^g(\PP^3)]
\]
where $CM^{4m+1-g}$ is the space of Cohen-Macaulay (shortly, CM) curves with Hilbert polynomial $4m+1-g$ and
$S^g(\PP^3)$ is the $g$-fold symmetric product. Note that there does not exist any CM curve of genus $g=2$ in $\PP^3$ (\cite[Theorem 3.3]{Har94}).
\begin{lemma}\label{pairsstable}
\hfill
\begin{enumerate}
\item If a pair $(s,F)$ is $\alpha$-semistable then the (scheme theoretic support) of $F$ is a CM-curve.
\item A pair $(s,F)$ is $\infty$-stable
if and only if the cokernel of $s$ is supported on a zero-dimensional subscheme of the support of $F$.
\item Assume that $\gcd(d,\chi)=1$. Then $(s,F)$ is $0^+$-stable if and only if $F$ is a stable sheaf.
\end{enumerate}
\end{lemma}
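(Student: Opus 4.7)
The plan is to read each statement directly from the definition of $\alpha$-stability, examining the behavior of the defining inequality in the appropriate limit.

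For part (1), I would show that every pure one-dimensional coherent sheaf on $\PP^3$ automatically has Cohen--Macaulay scheme-theoretic support. If the support $C=V(\operatorname{Ann}(F))$ possessed an embedded zero-dimensional associated point $p$, then some nonzero local section $f$ of $\O_C$ would be annihilated by the maximal ideal at $p$; by construction $f\notin\operatorname{Ann}(F)$, so $f\cdot F\subset F$ is a nonzero zero-dimensional subsheaf, contradicting purity.

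For part (2), I would expand the defining inequality
\[
\frac{\chi(F'(m))+\delta\alpha}{r(F')}\;<\;\frac{\chi(F(m))+\alpha}{r(F)}
\]
and extract the dominant $\alpha$-term as $\alpha\to\infty$. The inequality degenerates to $\delta/r(F')\leq 1/r(F)$, which is vacuous when $\delta=0$ and forces $r(F')=r(F)$ when $\delta=1$. Plugging in $F'=\operatorname{Im}(s)$ shows $\infty$-stability is equivalent to $r(\operatorname{Im}(s))=r(F)$, i.e.\ the cokernel of $s$ is zero-dimensional. Conversely, once the cokernel is zero-dimensional, any $F'$ through which $s$ factors contains $\operatorname{Im}(s)$ and thus has $r(F')=r(F)$; the residual inequality $\chi(F'(m))<\chi(F(m))$ for large $m$ then follows from purity of $F$, because $F/F'$ is zero-dimensional of positive length.

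For part (3), I would let $\alpha\to 0^+$ in the same inequality; the $\alpha$-terms drop out and the condition becomes exactly the Gieseker inequality for $F$, yielding one direction immediately. The converse requires the coprimality hypothesis: $0^+$-stability gives Gieseker semistability of $F$, and any $F'\subsetneq F$ sharing the reduced Hilbert polynomial of $F$ would force $r(F')=r(F)=d$ (since $\chi/d$ is in lowest terms when $\gcd(d,\chi)=1$), whereupon $F/F'$ would be a nonzero zero-dimensional quotient and $\chi(F')<\chi(F)$ would contradict equality of the reduced polynomials. The main subtlety throughout is the bookkeeping of the section parameter $\delta$ in the two regimes and pinpointing the precise step at which the coprimality assumption becomes indispensable.
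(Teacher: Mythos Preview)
Your proposal is correct and follows essentially the same approach as the paper: both derive (1) from purity, (2) from the large-$\alpha$ asymptotics of the stability inequality (which the paper defers to \cite{PT09}), and (3) from the observation that at $\alpha=0$ the inequality reduces to Gieseker stability together with the fact that $\gcd(d,\chi)=1$ rules out strictly semistable sheaves. You have simply unpacked in detail what the paper states tersely or by citation.
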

\begin{proof}
Part (1) is by the purity of $F$. Part (2) directly comes from \cite[\S 1.3]{PT09}. For part (3), when $\gcd(d,\chi)=1$, there is no strictly (Gieseker) semistable sheaves. When $\alpha=0$, the $\alpha$-stability is the same as the Gieseker stability by definition and a small perturbation of $\alpha$ does not change the stability.
\end{proof}

\begin{remark}\label{3mplusonewall}
\hfill
\begin{enumerate}
\item By a simple calculation, one can see that there are no walls for $d=1,2$, so $\M^{\alpha}(dm+1)$ does not change when $\alpha$ varies.
\item As a first non-trivial case, when the Hilbert polynomial is $P(m)=3m+1$,
Freiermuth and Trautmann proved that $\M(3m+1)$ consists of two smooth irreducible components $R \cup E$.
Here $R$ parametrizes the structure sheaves of the twisted cubic curves and $E$ is isomorphic to the universal cubic plane curve.
By analyzing the deformation space of each sheaf in $R \cap E$, they showed that $R$ and $E$ meet transversally.
This can be explained as follows by using wall crossing.
According to Piene and Schlessinger \cite{ps}, the Hilbert scheme $\Hilb_{\PP^3}(3m+1)$ consists of two irreducible components:
rational cubic curves, respectively, elliptic curves with a point.
After wall crossing, $\M^{\infty}(3m+1)$ consists of two irreducible components $R\cup E$ as above.
Note that the locus of elliptic curves with a free point is excluded after performing the wall-crossing.
There is no wall-crossing for pairs and all sheaves $F$ satisfy $h^0(F)=1$ so that the forgetful map is an isomorphism.
Thus, we have
\[
\M^{\infty}(3m+1) \isom \M^{0^+}(3m+1) \isom \M(3m+1).
\]
\end{enumerate}
\end{remark}

\subsection{Geometry of the Hilbert scheme}\label{sec:hilb}

The irreducible components of the Hilbert scheme $\Hilb_{\PP^3}(4m+1)$ have been described in \cite{chen_nollet}.
\begin{proposition}(\cite[Theorem 4.9]{chen_nollet})\label{prop:hilb41}
The Hilbert scheme of curves with Hilbert polynomial $4m+1$ in $\PP^3$ consists of four irreducible components:
\begin{enumerate}
\item The closure of the locus of the rational quartic curves.
\item The closure of the locus of the unions of a line and a planar cubic curve.
\item The closure of the elliptic quartic curves with one isolated point.
\item The closure of the planar quartic curves with three isolated points.
\end{enumerate}
\end{proposition}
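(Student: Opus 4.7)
The plan is to stratify $\Hilb_{\PP^3}(4m+1)$ according to the maximal Cohen--Macaulay 1-dimensional subscheme $C \subset Z$. For any $[Z]$ in the Hilbert scheme there is a canonical short exact sequence
\[
0 \to \mathcal{K} \to \O_Z \to \O_C \to 0,
\]
where $\deg C = 4$ and $\mathcal{K}$ is zero-dimensional of some length $\ell$. Taking Euler characteristics gives $\ell = 1 - \chi(\O_C) = p_a(C)$. Since the excerpt invokes Hartshorne's nonexistence of a CM curve of degree 4 and arithmetic genus 2 in $\PP^3$, and since pure degree-$4$ curves of genus $3$ must be planar by adjunction while non-planar pure degree-$4$ curves have $p_a \leq 1$, the relevant values are $p_a(C) \in \{0,1,3\}$; configurations of negative arithmetic genus (for instance two skew conics or four skew lines) exist only in non-maximal strata and must be shown to specialize into the closures described below.

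For each value of $g = p_a(C)$ I would analyze the stratum separately. When $g=3$ one has $\ell = 3$ and $C$ a planar quartic, so the stratum is fibered over $(\PP^3)^\vee \times |\O_{\PP^2}(4)|$ by the Hilbert scheme of $3$ points, producing an irreducible component of dimension $3+14+9 = 26$. When $g=1$ one has $\ell = 1$ and $C$ pure of degree $4$ with $\chi(\O_C) = 0$; by the classical theory of $\Hilb_{\PP^3}(4m)$, smooth elliptic quartic curves form an open dense irreducible $16$-dimensional family, and adjoining a single free or embedded point yields an irreducible $19$-dimensional component. Finally, when $g=0$ one has $Z = C$ pure of degree $4$ and genus $0$; by the classical classification of low-genus space curves due to Gruson--Peskine and Hartshorne, these split into exactly two irreducible $16$-dimensional families, namely the closure of smooth rational quartic curves and the closure of disjoint unions of a line and a planar cubic.

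The dimensions $16,16,19,26$ (together with distinguishing generic features such as integrality, linear span, and the presence of isolated points) then separate the four loci into pairwise-distinct irreducible components. The main obstacle lies in the $g=1$ stratum: one must show that elliptic quartics with an \emph{embedded} point lie in the closure of the locus of elliptic quartics carrying a free disjoint point, and that degenerations of the underlying pure curve (such as unions $L \cup C_0$ meeting in a point, with the added point placed appropriately) together assemble into a single irreducible family. A secondary but unavoidable step is to rule out phantom components: each pure quartic of negative arithmetic genus comes with a prescribed number of residual points making $\chi = 1$, and one must produce explicit deformations placing these configurations in the closure of one of the four listed components.
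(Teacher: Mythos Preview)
The paper does not prove this proposition; it is quoted directly from Chen--Nollet \cite[Theorem 4.9]{chen_nollet} with no argument supplied, so there is no in-paper proof to compare against.

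Your stratification by the arithmetic genus of the underlying CM curve is exactly the organizing principle Chen--Nollet use, and you have correctly isolated the real content: showing that an elliptic quartic carrying an \emph{embedded} point lies in the closure of elliptic quartics with a free isolated point is precisely the ``detaching embedded points'' problem that gives their paper its title, and it occupies most of their work. Two small corrections to your outline. First, your concern about pure quartics of negative arithmetic genus is misplaced: since the length $\ell$ of $\mathcal K$ equals $p_a(C)$ and must be nonnegative, no scheme $Z \in \Hilb_{\PP^3}(4m+1)$ can have two skew conics or four skew lines as its CM part; these configurations simply do not occur here, and there is nothing to rule out. Second, the assertion that the $g=0$ locus breaks into exactly two irreducible $16$-dimensional families is not Gruson--Peskine or Hartshorne; it is the main theorem of Nollet--Schlesinger \cite{NS03}, which the paper also cites.
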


The space of elliptic quartic curves in (3) has been studied in \cite{vainsencher,chen_nollet}. We remark that every connected CM-curve of degree $d=4$ and genus $g=1$ is a ACM-curve and non-planar (\cite[Theorem 3.3 and Proposition 3.5]{Har94}).

\begin{proposition}\label{prop:hilb}
\hfill
\begin{itemize}
\item (\cite[\S 5]{vainsencher}) The closure of CM-curves component in $\Hilb_{\PP^3}(4m)$ is obtained from the Grassmannian $\operatorname{Gr}(2,10)$ by blowing up twice,
where the blow-up loci are degeneracy loci of determinantal varieties.
\item (\cite[Theorem 5.2]{vainsencher}) The possible types (up to projective equivalence) of defining ideals for the connected CM-curve $C$ are:
\begin{enumerate}
\item $I_C=\langle q_1,q_2\rangle$, where $q_1$ and $q_2$ are quadratic polynomials.
\item $I_C=\langle xy,xz,yq_1+zq_2\rangle$; here $C$ is the union of a planar cubic curve and a line meeting at a point.
\item $I_C=\langle x^2,xy,xq_1+yq_2\rangle $; here $C$ is the union of a planar conic curve and a double line with genus $-2$.
\end{enumerate}
\item (\cite[Example 2.8 (b)]{chen_nollet}) The ideal sheaf of the CM-curve $C$ with Hilbert polynomial $4m$ has the resolution
\[
\ses{\O(-4)\oplus \O(-3)}{\O(-3)\oplus \O(-2)^{\oplus 2}}{I_C}.
\]
\end{itemize}
\end{proposition}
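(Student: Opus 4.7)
The plan is to verify each of the three bullets by an explicit computation with the Hilbert polynomial $4m$ and the CM hypothesis as input; all three statements are proved in the cited references, so in practice I would mostly follow \cite{vainsencher} and \cite{chen_nollet}, and the main work is to reconstruct the right determinantal/Koszul bookkeeping.

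For the first item, I would exploit the fact, recalled earlier in the paper, that every connected CM curve of degree $4$ and arithmetic genus $1$ is ACM and generically a $(2,2)$ complete intersection, hence satisfies $h^0(\I_C(2))=2$ on the entire CM component (the Hilbert polynomial gives $\chi(\I_C(2))=\binom{5}{3}-8=2$, and $h^1$ vanishes by ACM). This produces a morphism from the CM component to $\operatorname{Gr}(2,10)$ sending $C\mapsto \H^0(\I_C(2))$, with a rational inverse sending a pencil to its base locus. The inverse fails to extend precisely where the pencil picks up a common linear, and then a common quadratic, factor; these two loci are cut out by natural determinantal conditions on the tautological subbundle over $\operatorname{Gr}(2,10)$. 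Blowing them up in the prescribed order resolves the rational map and identifies the resulting variety with the CM component.

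For the second item, I would classify pencils of quadrics in $\PP^3$ up to $\mathrm{PGL}_4$-action by the Kronecker normal form of the associated pencil of symmetric matrices, and retain only those with a purely one-dimensional base scheme. The transverse case gives the complete intersection in (1); a pencil sharing a single linear factor $x$ decomposes as the plane $\{x=0\}$ cut by a residual cubic plus a line, giving (2); and the further degeneracy with a doubled linear factor produces a planar conic together with a double line of genus $-2$, giving (3). The main obstacle is the bookkeeping required to exclude all other Kronecker forms as sources of CM curves, which amounts to a case-by-case purity check on the base locus.

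For the third item, the Hilbert function of an ACM curve with Hilbert polynomial $4m$ is $h^0(\I_C(k))=\binom{k+3}{3}-4k$ for $k\ge 0$, giving two quadric generators and, in degree $3$, one additional generator not obtainable as a combination of linear multiples of the quadrics. The syzygies among these three generators work out to one linear and one quadratic relation, and packaging them produces the displayed resolution. Semicontinuity of Betti numbers along the flat CM family, together with specialization from the generic complete intersection, then propagates this shape of resolution to every member of the component.
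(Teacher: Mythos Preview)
The paper does not prove this proposition at all: it is stated as a collection of results quoted from \cite{vainsencher} and \cite{chen_nollet}, with no accompanying proof. So there is nothing to compare your argument against except the original sources. That said, two of your sketches have genuine gaps worth flagging.

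For the second bullet, your mechanism for producing cases (2) and (3) is incorrect. A pencil of quadrics sharing a common linear factor $x$ consists of forms $x l_1, x l_2$; its base locus is the plane $\{x=0\}$ together with the line $\{l_1=l_2=0\}$, which is two-dimensional, not a curve. The curves in (2) and (3) are \emph{not} complete intersections of two quadrics---their ideals require three generators---so they cannot be recovered as base loci of pencils, even after discarding components. In \cite{vainsencher} they arise instead as the curves parametrized by the exceptional divisors of the blow-ups in the first bullet; an intrinsic classification would proceed by taking a CM curve $C$, observing $h^0(\I_C(2))=2$ by ACM, and then analyzing the syzygies among the two quadrics directly (Hilbert--Burch), not via Kronecker forms of the pencil.

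For the third bullet, your Hilbert-function count is off in the complete-intersection case: when $I_C=\langle q_1,q_2\rangle$, the eight linear multiples of $q_1,q_2$ are independent and already span $\H^0(\I_C(3))$, so there is \emph{no} new cubic generator and the minimal resolution is the Koszul complex $0\to\O(-4)\to 2\O(-2)\to \I_C\to 0$. The displayed resolution is therefore non-minimal in case (1), obtained by adding a trivial $\O(-3)\xrightarrow{1}\O(-3)$ summand. Upper-semicontinuity of Betti numbers goes in the wrong direction to propagate a fixed resolution shape from the generic (smallest Betti numbers) point to the special ones; what is actually needed---and what \cite{chen_nollet} does---is to write down the explicit Hilbert--Burch presentation in each of the three cases and observe that they all fit the same numerical template.
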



\section{The moduli space $\M^{\infty}(4m+1)$}
In this section we will describe all stable pairs in $\M_{\PP^3}^{\infty}(4m+1)$.
Considering the Hilbert-Chow morphism and the Hilbert scheme described in \S \ref{sec:hilb}, we classify all stable pairs according to their supports.

\begin{lemma}\label{class}
Let $(s,F) \in \M_{\PP^3}^{\infty}(4m+1)$ be a stable pair. Then,
\begin{enumerate}
\item $s \colon \O_C \stackrel{\isom}{\to} F$ where $C$ is a rational quartic curve or the disjoint union of a line and a planar cubic curve.
Note that $\chi(\O_C(m)) = 4m + 1$.
\item $s \colon \O_C \hookrightarrow F$ such that ${\mathcal Coker}(s)=\CC_p$ for a point $p\in C$. Note that $\chi(\O_C(m))=4m$.
\item $s \colon \O_C \hookrightarrow F$ such that $K = {\mathcal Coker}(s)$ has dimension zero and length $3$. Note that $\chi(\O_C(m))=4m-2$.
\end{enumerate}
\end{lemma}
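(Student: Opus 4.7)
The plan is to combine Lemma \ref{pairsstable} with the classification of degree-$4$ CM-curves in $\PP^3$ (Proposition \ref{prop:hilb41} together with the absence of genus-$2$ CM-curves, \cite[Theorem 3.3]{Har94}) and read off the three cases from the length $\ell := \chi(\Coker(s))$. From Lemma \ref{pairsstable}(1), $F$ is pure one-dimensional with CM scheme-theoretic support $C$; from Lemma \ref{pairsstable}(2), $K := \Coker(s)$ is zero-dimensional.

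The key preliminary step is to identify the image $G := \Image(s)$ with the full structure sheaf $\O_C$. Since $I_C$ annihilates $F$, the cyclic image $G$ has the form $\O_C/J$ with $J := \mathrm{Ann}_{\O_C}(s)$. Because $K = F/G$ is zero-dimensional, at each generic point $\eta$ of $C$ one has $G_\eta = F_\eta$; coupled with the faithfulness of $F$ as an $\O_C$-module (by the definition of scheme-theoretic support), this forces $J_\eta = 0$. Hence $J \subset \O_C$ is a subsheaf of a pure one-dimensional sheaf with only zero-dimensional support, so $J = 0$ and $G = \O_C$. In particular $\deg C = \deg F = 4$, and the exact sequence $\ses{\O_C}{F}{K}$ gives $\chi(\O_C) = 1 - \ell$, so that $C$ has arithmetic genus $\ell$.

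I would finish by case analysis on $\ell$. By Proposition \ref{prop:hilb41} together with \cite[Theorem 3.3]{Har94}, the possible arithmetic genera of a degree-$4$ CM-curve in $\PP^3$ are $\{0,1,3\}$, so $\ell \in \{0,1,3\}$. If $\ell = 0$ then $K = 0$ and Proposition \ref{prop:hilb41} identifies $C$ as a rational quartic or a disjoint union of a line and a planar cubic, yielding (1). If $\ell = 1$ then $K = \CC_p$ for a single point $p$, and $p \in C$ since otherwise $\Ext^1(\CC_p,\O_C) = 0$ would split off a zero-dimensional summand of $F$, contradicting purity; this yields (2). If $\ell = 3$ then $K$ is a length-$3$ zero-dimensional sheaf whose support must likewise lie in $C$ by the same purity argument applied componentwise, yielding (3).

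The hardest step is the middle paragraph: pinning down that the cyclic submodule generated by $s$ is the \emph{full} structure sheaf $\O_C$ rather than some smaller cyclic quotient (with embedded components or lower degree). Everything else is then a direct dictionary between $\ell$, the arithmetic genus of $C$, and the cited Hilbert-scheme classification.
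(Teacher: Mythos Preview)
Your proof is correct and follows essentially the same approach as the paper: invoke Lemma~\ref{pairsstable} to see that $\Coker(s)$ is zero-dimensional with support in $C$, then use the classification of degree-$4$ CM-curves (genera $0,1,3$ only) from \S\ref{sec:hilb} and compare Hilbert polynomials. The paper's proof is two sentences and leaves the identification $\Image(s)=\O_C$ implicit (this is standard for PT pairs), whereas you spell it out carefully via faithfulness and purity; this extra detail is sound and does not constitute a different method.
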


\begin{proof}
By stability, the cokernel of $s$ must be supported on a zero-dimensional subscheme of the support of the sheaf (Lemma \ref{pairsstable}).
By using the classification of CM-curves given at \S \ref{sec:hilb} and comparing Hilbert polynomials,
one can check that the above list is complete.
\end{proof}

\begin{remark}
In case (1) of Lemma \ref{class}, the set of rational quartic curves is nothing but the Hilbert scheme of connected CM-curves with Hilbert polynomial $4m+1$,
which is well-known to be irreducible (cf. \cite{NS03}). We denote this component by $\bR^{\infty}$.
We denote by $\bX^{\infty}$ the component consisting of disjoint unions of a line and a planar cubic curve.
This component will be dropped from the moduli space after wall crossing.

In case (3), the sheaf $F$ must be planar. Let $F$ be the pure sheaf fitting into the short exact sequence
\[
\ses{\O_C}{F}{K}
\]
with $l(K)=3$. By the genus-degree formula, the curve $C$ has degree $4$ and is contained in a plane $H\subset \PP^3$.
Let us consider the natural restriction map
\[
\phi \colon F \twoheadrightarrow F|_H.
\]
Since $F|_H$ has the subsheaf $\O_C$, ${\mathcal Ker}(\phi)$ is zero-dimensional or zero.
In the first case the purity of $F$ gets contradicted. Thus $F \isom F|_H$, that is, $F$ is planar.
Such sheaves form an irreducible component, denoted $\bP^\infty$,
which is isomorphic to the relative moduli space $\M^{\infty}(\PP \cU,4m+1)$, where $\cU$ is the universal rank three bundle over $Gr(3,4)=(\PP^3)^*$.

In case (2) of Lemma \ref{class}, the sheaf $F$ is given by the extension
\[
\ses{\O_C}{F}{\CC_p},
\]
where $C$ is as in Proposition \ref{prop:hilb}. However, unlike in the degree 3 case studied by Freiermuth and Trautmann,
the sheaf $F$ may not be uniquely determined by a pair $(p,C)$,
that is, the extension group $\Ext^1(\CC_p,\O_C)$ can have dimension greater than one.
In fact, $\dim \Ext^1(\CC_p,\O_C) \leq 2$ (see Lemma \ref{jumpext}).
We denote the locus of such pairs by $\bE^{\infty}=\bE_1^{\infty}\cup \bE_2^{\infty}$, where the lower index is $\dim \Ext^1(\CC_p,\O_C)$.
\end{remark}

\begin{lemma}\label{jumpext}
Let $C$ be a CM-curve with Hilbert polynomial $4m$ and let $p \in C$. Then
\[
1\leq \dim \Ext^1(\CC_p,\O_C)\leq 2.
\]
Moreover,
$
\Ext^1(\CC_p,\O_C) \isom \CC^2
$
precisely when (up to projective equivalence)
\begin{enumerate}
\item $I_{C}=\langle xy,xz,yq_1+zq_2\rangle$ and $q_1(p)=q_2(p)=0$,
\item $I_C=\langle x^2,xy,xq_1+yq_2\rangle$ and $q_1(p)=q_2(p)=0$.
\end{enumerate}
\end{lemma}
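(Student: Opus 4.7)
The strategy is to reduce the computation of $\Ext^1(\CC_p,\O_C)$ to a linear-algebra question on the Hilbert--Burch matrix of $I_C$. First, apply $\Hom(\CC_p,-)$ to $\ses{I_C}{\O_{\PP^3}}{\O_C}$: since $\CC_p$ has codimension three in $\PP^3$ we have $\Ext^i(\CC_p,\O_{\PP^3})=0$ for $i\le 2$, and the long exact sequence yields $\Ext^1(\CC_p,\O_C)\isom\Ext^2(\CC_p,I_C)$. Applying $\Hom(\CC_p,-)$ once more to the locally free resolution
\[
0 \lr A \xrightarrow{M} B \lr I_C \lr 0,\qquad A=\O(-4)\oplus\O(-3),\ B=\O(-3)\oplus\O(-2)^{\oplus 2},
\]
of Proposition \ref{prop:hilb}, and using $\Ext^i(\CC_p,\O(k))=0$ for $i<3$ together with $\Ext^3(\CC_p,\O(k))\isom\CC$, the resulting complex collapses to
\[
\Ext^2(\CC_p,I_C) \isom \ker\bigl(M(p)\colon \CC^2 \to \CC^3\bigr),
\]
where $M(p)$ denotes the evaluation of $M$ at the point $p$.

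The upper bound $\dim\Ext^1(\CC_p,\O_C)\le 2$ is then immediate. For the lower bound I appeal to Hilbert--Burch: since $I_C$ is a Cohen--Macaulay ideal of codimension two with a resolution of the displayed shape, it is generated by the three $2\times 2$ minors of $M$ (which can also be verified directly in each case by computing the minors and comparing with the generators). These minors vanish at every $p\in C$, forcing $\rank M(p)\le 1$ and hence $\dim\ker M(p)\ge 1$.

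It remains to classify when the jump $\dim\Ext^1=2$ occurs, i.e.\ when $M(p)=0$, and I treat the three defining types of $I_C$ in Proposition \ref{prop:hilb} separately. In the complete-intersection type $I_C=\langle q_1,q_2\rangle$ the minimal resolution is the Koszul complex of length two, so to fit the shape above one must adjoin a trivial summand $\O(-3)\xrightarrow{\mathrm{id}}\O(-3)$; consequently one column of $M$ carries a nonzero constant entry, $\rank M(p)=1$ identically on $C$, and the jump never occurs. In the remaining two types the syzygy matrix is read off directly from the generators: the degree-three column of $M$ encodes the obvious syzygy between the two monomial generators, while the degree-four column expresses the cubic generator as a combination of the two quadrics. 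The resulting explicit matrices
\[
\begin{pmatrix} x & 0 \\ -q_1 & z \\ -q_2 & -y \end{pmatrix} \quad \text{and} \quad \begin{pmatrix} x & 0 \\ -q_1 & y \\ -q_2 & -x \end{pmatrix}
\]
have all entries of positive degree in the linear forms and in $q_1,q_2$, and one reads off that $M(p)=0$ is equivalent to $p$ lying at the distinguished singular locus of $C$ (the node where the line meets the planar cubic, respectively a point on the reduced double line) together with $q_1(p)=q_2(p)=0$. After placing this distinguished point at $[1:0:0:0]$ by a projective change of coordinates, this is precisely the condition of the lemma.

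The main delicate step will be the last one, namely verifying that $M(p)=0$ cannot occur outside the distinguished singular locus. This is handled by a short stratification of $C$ according to its irreducible and embedded components: on any non-distinguished stratum at least one of the coordinate entries of $M$ remains nonzero, so $\rank M(p)=1$ on that stratum.
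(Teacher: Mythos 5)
Your argument is correct and is essentially the paper's proof: both reduce the computation to the rank at $p$ of the syzygy matrix in the Chen--Nollet resolution of Proposition \ref{prop:hilb} and then run the identical three-case analysis on the ideal types, the only difference being that the paper applies Serre duality, $\Ext^1(\CC_p,\O_C)\isom\Ext^1(I_C,\CC_p)^*$, and computes the cokernel of the evaluated map, while you apply $\Hom(\CC_p,-)$ directly and compute $\ker M(p)$ --- the transpose of the same linear algebra (your Hilbert--Burch remark is a nice touch, making the lower bound $\rank M(p)\le 1$ on $C$ uniform where the paper reads it off case by case). One trivial slip: with the coordinates as written, the distinguished point $x=y=z=0$ is $[0:0:0:1]$, the normalization used in the paper, not $[1:0:0:0]$.
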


\begin{proof}
By Serre duality (\cite[Theorem 3.12]{Huy06}), and using the short exact sequence
\[
\ses{I_C}{\O_{\PP^3}}{\O_C},
\]
we have the isomorphisms
\[
\Ext^1(\CC_p,\O_C) \isom \Ext^2 (\O_C,\CC_p)^* \isom \Ext^1 (I_C,\CC_p)^*
\]
From the resolution of $I_C$ at Proposition \ref{prop:hilb}, we get the exact sequence
\begin{multline}
\label{sequenceofP1}
0 \to \Ext^0 (I_C,\CC_p) \to \Ext^0(\O(-3)\oplus2\O(-2),\CC_p) \xrightarrow{\delta} \Ext^0(\O(-4)\oplus \O(-3),\CC_p) \to \\
\Ext^1 (I_C,\CC_p) \to 0.
\end{multline}
Now we calculate the rank of $\delta$ for each case described in Proposition \ref{prop:hilb}. Without loss of generality, we may assume that $p=[0:0:0:1]$.
\begin{enumerate}
\item If $C$ is general, meaning $I_C=\langle q_1,q_2\rangle $, then $\delta$ is given by the matrix
\[
\left[\begin{array}{cc}0 & -q_2 \\0 & q_1 \\ 1 & 0 \\\end{array}\right]
\]
which has rank $1$ at $p$. Thus, $\Ext^1 (\CC_p,\O_C) \isom \CC$.
\item Assume that $C$ is the union of the line $L$ and cubic curve $C_0$.
Write
\[
I_C=\langle xy,xz,yq_1+zq_2\rangle, \qquad I_L=\langle y,z\rangle, \qquad I_{C_0}=\langle x, yq_1+zq_2\rangle.
\]
Then $\delta$ is given by the matrix
\[
\left[\begin{array}{cc}-q_1 & z \\-q_2 & -y \\ x & 0 \\\end{array}\right]
\]
whose rank at $p$ depends on the position of $p$ as follows:
\begin{enumerate}
\item $\rank (\delta(p)) = 0$ if and only if $p \in \sing(C_0)$ (i.e., $q_1(p)=q_2(p)=0$) and  $p \in L$.
In this case, $\Ext^1(\CC_p,\O_C) \isom \CC^2$;
\item $\rank(\delta(p)) = 1$, otherwise. In this case, $\Ext^1(\CC_p,\O_C) \isom \CC$.
\end{enumerate}
\item Assume that $C$ is the union of a conic curve and a double line of genus $-2$. Write
\[
I_C=\langle x^2,xy,xq_1+yq_2\rangle, \qquad I_L=\langle x,y\rangle, \qquad I_Q=\langle x,q_2\rangle.
\]
Then $\delta$ is given by the matrix
\[
\left[\begin{array}{cc}-q_1 & -y \\-q_2 &x \\ x & 0 \\\end{array}\right].
\]
As before, we see that $\Ext^1(\CC_p,\O_C) \isom \CC^2$ if and only if $q_1(p)=q_2(p)=0$.
This happens when $p \in L\cap Q$ and $q_1(p)=0$. \qedhere
\end{enumerate}
\end{proof}


\noindent
We denote by $\langle C\rangle $ the maximal linear space containing the curve $C$. From Lemma \ref{jumpext}, we obtain:

\begin{proposition}
\label{prop:einf}
The moduli space $\M^{\infty}(4m+1)$ is the union
$
\bR^{\infty}\cup \bE^{\infty}\cup\bP^{\infty}\cup \bX^{\infty}.
$
Furthermore, $\bE^{\infty} = \bE_1^{\infty} \cup \bE_2^{\infty}$, where
\begin{enumerate}
\item $\bE_1^{\infty}$ is the set of non-split extensions of $\CC_p$ by $\O_C$ such that one of the following holds:
\begin{enumerate}
\item $I_C=\langle q_1,q_2\rangle$;
\item $I_C=I_{L\cup C_0}$, where $C_0$ is a planar cubic curve, $L$ is a line meeting $C_0$ at a point, and $p$ is a point in $C_0$
such that either $p\notin \sing(C_0)$ or $p\notin L$;
\item $I_C=\langle x^2,xy,xq_1+yq_2\rangle$ after a change of coordinates,
and $p$ is a point on $C$ such that $p \notin L$ or $q_1(p) \neq 0$ or $q_2(p) \neq 0$;
\end{enumerate}
\item $\bE_2^{\infty}$ is the set of non-split extensions of $\CC_p$ by $\O_C$ such that one of the following holds:
\begin{enumerate}
\item $I_C=I_{L\cup C_0}$, $L\nsubseteq \langle C_0\rangle$, $\{p\}=C\cap L \subset \sing(C_0)$ for a line $L$ and a cubic curve $C_0$;
\item $I_C=\langle x^2,xy,xq_1+yq_2\rangle$ after a change of coordinates, and $p$ is a point on $C$ such that $p \in L$ and $q_1(p)=q_2(p)=0$.
\end{enumerate}
\end{enumerate}
\end{proposition}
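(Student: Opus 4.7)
The plan is to assemble the statement from the two preceding lemmas; the argument is essentially an organizational step, so I foresee no real obstacle beyond careful bookkeeping.

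First I would read off the four-way decomposition $\M^\infty(4m+1) = \bR^\infty \cup \bE^\infty \cup \bP^\infty \cup \bX^\infty$ directly from Lemma \ref{class}: its case (1), combined with Proposition \ref{prop:hilb41}, gives either $\bR^\infty$ or $\bX^\infty$ depending on whether the CM-curve is a rational quartic or a disjoint union of a line and a planar cubic; case (2) is by definition $\bE^\infty$; and case (3) is $\bP^\infty$, since the remark following the classification already shows that a stable pair with length-three cokernel is forced to be planar. This exhausts the moduli space.

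Next I would split $\bE^\infty$ according to the value $d := \dim \Ext^1(\CC_p, \O_C) \in \{1,2\}$ established by Lemma \ref{jumpext}, setting $\bE_j^\infty$ to be the locus where $d = j$. The lemma pins down $d = 2$ to the two algebraic conditions $q_1(p) = q_2(p) = 0$ in the line-cubic and conic-double-line normal forms, so the remaining task is purely a geometric translation. In the line-cubic case $I_C = \langle xy, xz, yq_1+zq_2\rangle$, the plane cubic $C_0$ is cut out by $yq_1+zq_2$ inside $\{x=0\}$, whose partial derivatives at the normal-form point $p = [0{:}0{:}0{:}1]$ are exactly $q_1(p)$ and $q_2(p)$; hence $q_1(p)=q_2(p)=0$ is equivalent to $p \in \sing(C_0)$, and combined with the automatic $p \in L$ imposed by the normal form this becomes the condition $\{p\} = C \cap L \subset \sing(C_0)$ listed in $\bE_2^\infty$(a). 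The same partial-derivative check applied to $I_C = \langle x^2,xy,xq_1+yq_2\rangle$ handles $\bE_2^\infty$(b). Negating these in each of the respective normal forms gives $\bE_1^\infty$(b) and (c), while the complete-intersection case of Lemma \ref{jumpext}(1) always falls into $\bE_1^\infty$(a).

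Finally, the extra clause $L \not\subseteq \langle C_0\rangle$ appearing in $\bE_2^\infty$(a) is automatic from the normal form of case (2) in Proposition \ref{prop:hilb}: were $L$ contained in the plane of $C_0$, Bezout would force $L \cap C_0$ to have length $3$, contradicting the single-point meeting built into that normal form. Since these cases exhaust Proposition \ref{prop:hilb}, the proposition follows as a formal synthesis of Lemmas \ref{class} and \ref{jumpext} together with the routine coordinate-level check above.
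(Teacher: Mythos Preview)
Your proposal is correct and follows essentially the same approach as the paper, which simply states ``From Lemma~\ref{jumpext}, we obtain:'' and treats the proposition as a direct corollary of the two preceding lemmas and the remark after Lemma~\ref{class}. Your write-up is in fact more explicit than the paper's: you spell out the partial-derivative check identifying $q_1(p)=q_2(p)=0$ with $p\in\sing(C_0)$, and you justify the clause $L\nsubseteq\langle C_0\rangle$ via B\'ezout, neither of which the paper bothers to record.
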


\noindent
In the cases (1)(c) and (2)(b), we frequently write $C=L^2Q$,
where $Q$ is the conic defined by $\langle x, q_2 \rangle$ and $L$ is the line $\langle x,y\rangle$,
because $C$ is a union of $Q$ and a double line supported on $L$.

\begin{lemma}
\label{lem:e1}
Let $E\subset \Hilb_{\PP^3}(4m)$ be the locus of connected locally CM-curves.
Let $\C=\{(p,C) \mid C\in E, p\in C\}$ be the universal family of $E$. Then $\C$ is an irreducible variety of dimension $17$.
Consequently, $\bE_1^{\infty}$ is an irreducible variety of dimension $17$.
\end{lemma}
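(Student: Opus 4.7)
The plan is first to compute $\dim E$ and $\dim \C$, then argue irreducibility of $\C$ by a standard flatness/generic-fiber argument, and finally reduce $\bE_1^{\infty}$ to a dense open subset of $\C$ via Lemma \ref{jumpext}.

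First, I would read off the dimension and irreducibility of $E$ from Proposition \ref{prop:hilb}. Since $E$ is obtained from the Grassmannian $\operatorname{Gr}(2,10)$ by two blow-ups along determinantal loci, it is irreducible of dimension $\dim \operatorname{Gr}(2,10)=16$. The forgetful morphism $\pi\colon \C\to E$ is the universal family of the Hilbert scheme restricted to $E$; in particular it is proper and flat with one-dimensional fibers, so $\dim \C = 16+1 = 17$ and $\C$ is equidimensional.

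Next I would prove irreducibility of $\C$. Because $\pi$ is flat and $E$ is irreducible, every irreducible component of $\C$ dominates $E$. Over the dense open subset $U\subset E$ parametrizing smooth elliptic quartic curves (case (1) of Proposition \ref{prop:hilb}), the fiber of $\pi$ is irreducible, so any component of $\C$ that dominates $E$ must contain the preimage of $U$. Two such components would then share a dense open subset, forcing them to coincide; hence $\C$ is irreducible of dimension $17$.

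Finally, I would relate $\bE_1^{\infty}$ to $\C$. By Lemma \ref{jumpext}, the locus $\C_1\subset \C$ on which $\dim \Ext^1(\CC_p,\O_C)=1$ is the complement of the closed locus described by cases (2) and (3) of that lemma and is non-empty (it contains every pair $(p,C)$ with $C$ a smooth elliptic quartic); by upper semicontinuity $\C_1$ is open, and it is dense since $\C$ is irreducible. The assignment $F\mapsto (p,C)$, sending an $\bE_1^{\infty}$-sheaf $F$ to its schematic support $C$ and the point $p$ supporting the cokernel of $\O_C\hookrightarrow F$, defines a map $\bE_1^{\infty}\to \C_1$. Over $(p,C)\in \C_1$, the fiber consists of isomorphism classes of non-split extensions of $\CC_p$ by $\O_C$, which is a single point because $\Ext^1(\CC_p,\O_C)\cong \CC$. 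Thus $\bE_1^{\infty}\to \C_1$ is bijective on closed points, and $\bE_1^{\infty}$ inherits irreducibility and dimension $17$ from $\C_1$.

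The main technical point I expect to need to handle carefully is the irreducibility argument for $\C$: one must invoke flatness of the universal family over $E$ to force every component to dominate $E$, and then pick up the correct dense open locus of smooth elliptic quartics from Proposition \ref{prop:hilb} so that the general fiber is literally irreducible (rather than, say, a reducible CM-curve as in cases (2)–(3)). Everything else is a direct consequence of Proposition \ref{prop:hilb} and Lemma \ref{jumpext}.
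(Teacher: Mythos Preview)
Your proof is correct and follows essentially the same approach as the paper: both deduce that $E$ is irreducible of dimension $16$ from Proposition~\ref{prop:hilb}, use flatness of the universal family $\pi\colon\C\to E$ to get that every component of $\C$ has dimension $17$, and then establish irreducibility by controlling what happens over a dense open in $E$. The only cosmetic difference is that the paper rules out extra components by bounding $\dim\pi^{-1}(\text{exceptional locus})\le 16$, whereas you argue that every component dominates $E$ and must contain the fibers over the smooth locus; you also spell out the passage from $\C$ to $\bE_1^\infty$, which the paper leaves implicit.
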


\begin{proof}
The projection map $\pi \colon \C\subset E\times \PP^3 \to E$ is flat.
By \cite{vainsencher}, we know that $E$ is an irreducible variety. Now we apply \cite[III. Proposition 9.6]{Hartshorne}.
For each $e\in E$, every irreducible component of the fiber $\pi^{-1}(e)$ has dimension one.
Thus, every irreducible component of $\C$ has dimension $16+1=17$.
By Proposition \ref{prop:hilb}, $E$ is a blown-up space of a Grassmannian variety.
Thus, the inverse image of $\pi$ away from the exceptional locus in $E$ is irreducible.
But the inverse image of the exceptional locus in $E$ has dimension $16$, hence it does not form a new irreducible component of $\C$.
\end{proof}

\section{Proof of the main theorem}\label{sec:main}

We use the wall crossing of $\alpha$-stable pairs to relate $\M^{\infty}(4m+1)$ with $\M^{0^+}(4m+1)$.
Note that, since a sheaf in $\M(4m+1)$ has at least one non-zero section, the forgetful map from $\M^{0^+}(4m+1)$ to $\M(4m+1)$ is surjective.
Moreover, we have the following.

\begin{lemma}\label{lem:h0}
For $F\in \M(4m+1)$, we have $1\le h^0(F)\le 2$. Moreover if $h^0(F)=2$, then $F$ must be planar.
\end{lemma}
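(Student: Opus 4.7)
The lower bound is immediate: as $F$ is supported in dimension one, $H^i(F)=0$ for $i\ge 2$, so $\chi(F)=h^0(F)-h^1(F)=1$ gives $h^0(F)\ge 1$.

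For the upper bound and the planarity claim I would start from any nonzero section $s\colon \O_{\PP^3}\to F$. Purity of $F$ implies $\operatorname{Im}(s)\isom \O_C$ for some Cohen--Macaulay curve $C\subseteq \operatorname{Supp}(F)$, and the stability inequality $p(\O_C)<p(F)=m+1/4$ rewrites as $(1-p_a(C))/\deg(C)<1/4$. Using that connected CM curves of degree $\le 2$ have $p_a\le 0$ and that no CM curve of degree $4$ and genus $2$ exists in $\PP^3$ (\cite[Theorem 3.3]{Har94}), only three cases for $C$ survive: (i) a plane cubic; (ii) a plane quartic; (iii) a (necessarily non-planar) connected CM curve of degree $4$ and arithmetic genus $1$ as in Proposition \ref{prop:hilb}. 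Setting $Q:=F/\O_C$, a further stability argument forces $Q$ to be $\O_L$ for a line $L$ in case (i), $\CC_p$ for a point $p$ in case (iii), or a length-$3$ zero-dimensional sheaf in case (ii). Applying cohomology to $0\to \O_C\to F\to Q\to 0$ and using $h^0(\O_C)=1$, one gets $h^0(F)=1+\dim\ker\bigl(\delta\colon H^0(Q)\to H^1(\O_C)\bigr)$. In cases (i) and (iii) this yields $h^0(F)\le 2$ at once from $\dim H^0(Q)=1$; in case (ii) the bound reduces to the standard $h^0(F)\le 2$ for stable sheaves on $\PP^2$ with Hilbert polynomial $4m+1$ (Le Potier, \cite{lepot}).

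To finish, when $h^0(F)=2$, in case (ii) both $\O_C$ and $Q$ are annihilated by the linear form cutting out the plane $H$ of $C$, so $I_H\cdot F=0$ and hence $F\isom F|_H$. In case (i), $h^0(F)=2$ is equivalent to $\delta=0$, and a direct calculation shows that if $L\not\subseteq H$ then the natural map $\Ext^1(\O_L,\O_C)\to H^1(\O_C)$ is an isomorphism, so the unique nontrivial extension has $\delta\neq 0$, contradicting $h^0(F)=2$; thus $L\subseteq H$, and the same annihilator argument gives $F\isom F|_H$. The main obstacle is case (iii): ruling out stable $F$ with non-planar $C$ attaining $h^0(F)=2$. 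Here $\delta=0$ forces $\dim\Ext^1(\CC_p,\O_C)\ge 2$, which by Lemma \ref{jumpext} happens exactly when $C=L\cup C_0$ with $p\in L\cap\sing(C_0)$, or when $C=L^2Q$ with $p\in L$ and $q_1(p)=q_2(p)=0$. Using the defining ideals in Proposition \ref{prop:hilb}, I would identify the one-dimensional subspace of $\Ext^1(\CC_p,\O_C)$ where $\delta=0$ with the extensions in which the line structure sheaf $\O_L$ survives as a subsheaf of $F$; since $p(\O_L)=m+1>m+1/4=p(F)$, any such $F$ is unstable, contradicting our hypothesis. This eliminates case (iii) and completes the proof.
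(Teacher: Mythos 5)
Your proposal takes a genuinely different route from the paper, which proves Lemma \ref{lem:h0} in one line by appealing to the classification of resolutions in \S\ref{sec:res}, itself obtained from the Beilinson spectral sequence together with the estimates $h^0(F(-1))=0$ and $h^1(F)\le 1$ of \cite{choi_chung_estimates}. (Note that the numerical bound is in fact immediate from the latter: $h^0(F)=1+h^1(F)\le 2$; the real content of the lemma is the planarity assertion.) Your section-theoretic skeleton --- the image of a section is $\O_C$ for a CM curve, stability plus Hartshorne's genus bound leaves exactly the three cases, and $h^0$ is then controlled through the quotient $Q=F/\O_C$ --- is correct and would yield a self-contained proof, but as written it has gaps precisely where the lemma is nontrivial.

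First, in case (ii) you invoke the bound $h^0\le 2$ for plane quartic sheaves (\cite{lepot}, \cite{drezet_maican}) before knowing that $F$ is planar, and your later annihilator argument does not close this circle: nothing guarantees $l\cdot Q=0$ (only the \emph{reduced} support of $Q$ lies in $H$; $Q$ may carry nonreduced structure transverse to $H$), and even granting $l\O_C=0=lQ$ one only gets $l^2F=0$, not $lF=0$. The correct argument, which is the paper's remark after Lemma \ref{class} and needs no hypothesis on $h^0$, is a purity argument: multiplication by $l$ kills $\O_C$, hence factors through the zero-dimensional $Q$, so $lF$ is a zero-dimensional subsheaf of the pure sheaf $F$ (indeed of $\O_C$), hence zero; this makes every case (ii) sheaf planar \emph{before} any cohomology count and restores the logical order. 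Second, case (iii) --- the crux --- is only announced, not proved. The claim that $\delta=0$ on a nonsplit class forces $\dim\Ext^1(\CC_p,\O_C)\ge 2$ is given no argument, and your proposed mechanism (identifying $\ker\delta$ with extensions in which $\O_L$ survives) cannot supply one when $C$ is irreducible, e.g.\ a nodal or cuspidal elliptic quartic with $I_C=\langle q_1,q_2\rangle$ and $p\in\sing(C)$, where $\dim\Ext^1=1$ by Lemma \ref{jumpext} and there is no line in sight; there you must verify directly that the unique nonsplit extension has $h^0=1$, for instance by identifying it with the pushforward of the partial normalization of $C$ at $p$ (connected of arithmetic genus $0$), or with $\O_C(p)$ when $p$ is regular. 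In the two-dimensional cases your identification of $\ker\delta$ with the pushout line of $\Ext^1(\CC_p,\O_L(-1))$ is true but unproven; a cleaner route, already implicit in Proposition \ref{prop:E2} and Lemma \ref{building}, is that any \emph{stable} such $F$ fits into $0\to\O_L(-1)\to F\to G\to 0$ with $G$ a stable sheaf of Hilbert polynomial $3m+1$, whence $h^0(F)\le h^0(G)=1$ by the degree-three result recalled in Remark \ref{3mplusonewall}. The same filtration device would also replace the ``direct calculation'' you leave undone in case (i), where $F$ fits into $0\to\O_L(-1)\to F\to\O_{C_0}(p)\to 0$ and the singular positions of $p$ on $C_0$ otherwise require a separate local $\Ext$ computation.
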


\begin{proof}
This is clear from the possible resolutions of $F$ found at \S \ref{sec:res}.
\end{proof}

\noindent
When $F$ is nonplanar, by choosing the unique non-zero section, we will regard a sheaf as a pair.
The locus of planar sheaves was studied in \cite{cc1}.

We will use the following well-known lemma frequently.
\begin{lemma}
Let $X$ be a projective scheme and $Y \subset X$ a closed subscheme.
Let $\F$ be a coherent $\O_X$-module and let $\G$ be a coherent $\O_Y$-module.
Then there is an exact sequence of vector spaces
\begin{multline}
\label{thomas1}
0 \to \Ext^1_{O_Y}(\F_{|Y}, \G) \to \Ext^1_{\O_X}(\F, \G) \to \Hom_{\O_Y}(\Tor_1^{\O_X}(\F, \O_Y), \G) \\
\to \Ext^2_{\O_Y}(\F_{|Y}, \G) \to \Ext^2_{\O_X}(\F, \G)
\end{multline}
In particular, if $\F$ is an $\O_Y$-module, then there is an exact sequence
\begin{multline}
\label{thomas2}
0 \to \Ext^1_{O_Y}(\F_{|Y}, \G) \to \Ext^1_{\O_X}(\F, \G) \to \Hom_{\O_Y}(\F \tensor_{\O_X} \I_Y, \G) \\
\to \Ext^2_{\O_Y}(\F_{|Y}, \G) \to \Ext^2_{\O_X}(\F, \G).
\end{multline}
\end{lemma}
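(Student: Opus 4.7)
The plan is to deduce both exact sequences from the Grothendieck spectral sequence for a composition of derived functors. The starting point is the derived change-of-rings adjunction
\[
R\Hom_{\O_X}(\F,\G) \;\simeq\; R\Hom_{\O_Y}\!\bigl(\F \tensor^L_{\O_X} \O_Y,\; \G\bigr)
\]
associated to the closed immersion $i\colon Y\hookrightarrow X$; this is the derived form of $i^*\dashv i_*$, valid whenever $\G$ is an $\O_Y$-module, and is legitimate in our setting because $i_*$ is exact and preserves injectives on (quasi-)coherent sheaves over a projective scheme.

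Next I would apply the hyper-Ext spectral sequence to the right-hand side. Recalling that $\F \tensor^L_{\O_X}\O_Y$ has homology $\Tor_q^{\O_X}(\F,\O_Y)$ in degree $-q$, this produces a first-quadrant spectral sequence
\[
E_2^{p,q} = \Ext^p_{\O_Y}\!\bigl(\Tor_q^{\O_X}(\F, \O_Y),\; \G\bigr) \;\Longrightarrow\; \Ext^{p+q}_{\O_X}(\F, \G).
\]
Its standard five-term exact sequence in low degrees
\[
0 \to E_2^{1,0} \to H^1 \to E_2^{0,1} \to E_2^{2,0} \to H^2
\]
is, after substituting $\F\tensor_{\O_X}\O_Y = \F|_Y$ for the $q=0$ terms, precisely \eqref{thomas1}.

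For the second assertion I would tensor the defining sequence $\ses{\I_Y}{\O_X}{\O_Y}$ with $\F$ over $\O_X$. Since $\O_X$ is Tor-trivial, the induced long Tor exact sequence collapses to
\[
0 \to \Tor_1^{\O_X}(\F, \O_Y) \to \F \tensor_{\O_X}\I_Y \to \F \to \F \tensor_{\O_X}\O_Y \to 0,
\]
and under the hypothesis that $\F$ is already an $\O_Y$-module the rightmost map is the canonical isomorphism, forcing $\Tor_1^{\O_X}(\F, \O_Y) \isom \F \tensor_{\O_X}\I_Y$. Substituting this identification into \eqref{thomas1} immediately yields \eqref{thomas2}.

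There is no real obstacle here: the only delicate point is verifying the derived adjunction, which is standard for a closed immersion of projective schemes. Should one wish to avoid spectral-sequence machinery, the same five-term sequence falls out of the two spectral sequences associated to the double complex $\Hom_{\O_Y}(P_\bullet\tensor_{\O_X}\O_Y,\; \I^\bullet)$ built from a locally free $\O_X$-resolution $P_\bullet\to\F$ and an injective $\O_Y$-resolution $\G\to\I^\bullet$: one of them collapses via the adjunction $\Hom_{\O_Y}(P_\bullet\tensor_{\O_X}\O_Y,\I^\bullet)\isom \Hom_{\O_X}(P_\bullet,\I^\bullet)$ to compute $\Ext^\ast_{\O_X}(\F,\G)$, while the other realizes the $E_2$-page displayed above.
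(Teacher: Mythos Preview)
Your argument is correct and follows essentially the same route as the paper: both invoke the base-change spectral sequence $\Ext_{\O_Y}^p(\Tor_q^{\O_X}(\F,\O_Y),\G)\Rightarrow\Ext_{\O_X}^{p+q}(\F,\G)$ and read off its five-term exact sequence, with your write-up simply supplying more detail on the derived adjunction underlying it and on the identification $\Tor_1^{\O_X}(\F,\O_Y)\isom\F\tensor_{\O_X}\I_Y$ when $\F$ is an $\O_Y$-module.
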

\begin{proof}
The results directly come from the existence of the base change spectral sequence (\cite[Theorem 12.1]{mcc01}):
\[
\Ext_{\O_Y}^p(\Tor_q^{\O_X}(\F, \O_Y), \G) \Rightarrow \Ext^{p+q}_{\O_X}(\F, \G). \qedhere
\]
\end{proof}

\begin{remark}
In view of the above lemma, the results in \cite[Theorem 1.1]{freiermuth_trautmann} can be read as follows.
Using the wall-crossing (Remark \ref{3mplusonewall}), one can see that $\M(3m+1)$ consists of two components:
\begin{enumerate}
\item Non-planar sheaves, i.e. the structure sheaves of CM-curves of degree 3;
\item Planar sheaves.
\end{enumerate}

The dimensions of each component are 12 and 13, respectively.
Moreover, it was shown in \cite[Theorem 1.1]{freiermuth_trautmann} that the intersection of these two components consists of singular planar sheaves,
i.e., planar sheaves that are not locally free on their support.
The space of CM-curves is quasi-projective and smooth, cf. \cite[\S 4]{ps}.
Let $F$ be a stable sheaf supported on a curve $C$ that is contained in a plane $H\subset \PP^3$. From \eqref{thomas2}, we have
\[
0 \to \Ext_H^1(F,F) \to \Ext_{\PP^3}^1(F,F) \to \Hom_H(F,F(1)) \to \Ext_H^2(F,F) = \Ext_H^0(F,F(-3))^*=0.
\]
By the Grothendieck-Riemann-Roch theorem, $\dim \Ext_H^1(F,F)=10$.
Now, if $F$ is a locally free $\O_C$-module, then
\[
\Hom_H(F,F(1)) \isom \H^0(C, F^*\otimes F(1)) \isom \H^0(\O_C(1)) \isom \CC^3,
\]
hence $F$ gives a smooth point in the moduli space $\M(3m+1)$.
If $F$ is not locally free on its support, then from the exact sequence
\[
\ses{\O_C}{F}{\CC_p}
\]
we deduce that $\dim \Hom(F,F(1))\leq 4$.
According to \cite[Theorem 1.1]{freiermuth_trautmann}, this is actually an equality, hence $F$ gives a singular point in $\M(3m+1)$.
\end{remark}

\noindent
The numerical type of wall of the spaces $\M^{\alpha}(4m+1)$ is given by the following lemma.

\begin{lemma}\label{wall}
The wall crossing on $\M^{\alpha}(4m+1)$ occurs at $\alpha=3$ with the Jordan-H\"older filtration
\[
(0,\O_L)\oplus (1,\O_{C_0})
\]
where $C_0$ is a planar cubic curve.
\end{lemma}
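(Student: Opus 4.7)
My plan is to reduce the lemma to the general wall-crossing result of \cite{cc1} and then carry out a short classification argument to pin down $G$. First I would invoke \cite{cc1}, which establishes that $\M^\alpha(4m+1)$ has a unique wall at $\alpha=3$ and that every strictly semistable pair at this wall is S-equivalent to $(0,\O_L)\oplus(1,G)$, where $L$ is a line and $G$ is a pure one-dimensional sheaf with Hilbert polynomial $3m$ admitting a nonzero section. The numerics check out: both factors have reduced $\alpha$-slope $1$ at $\alpha=3$ (since $\chi(\O_L)/r(\O_L)=1$ and $(\chi(G)+3)/r(G)=3/3=1$), and the Hilbert polynomials sum correctly to $(m+1)+3m=4m+1$.

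The new content is then to refine $G$ to $\O_{C_0}$ for a planar cubic $C_0$. I would exploit the $\alpha=3$-stability of $(1,G)$: the nonzero section factors as $\O\twoheadrightarrow\O_{C'}\hookrightarrow G$, where $C'$ is the schematic image, a Cohen--Macaulay curve of degree $r'\leq 3$. If $\O_{C'}$ were a proper subsheaf of $G$, stability would force the reduced $\alpha$-slope of the sub-pair $(1,\O_{C'})$ to be strictly less than $1$, giving $\chi(\O_{C'})<r'-3\leq 0$. But by the standard genus bound for CM curves of degree at most three in $\PP^3$ (\cite[Proposition 3.5]{Har94}), one has $\chi(\O_{C'})\geq 0$, with equality only for planar cubics. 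Hence $\O_{C'}=G$, and matching Euler characteristics forces $r'=3$ and $\chi(\O_{C'})=0$, so $C'=C_0$ is a planar cubic and $G=\O_{C_0}$.

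It remains to verify that each proposed factor is itself $\alpha=3$-stable. For $(0,\O_L)$, the proper subsheaves of $\O_L$ are $\O_L(-k)$ with $k\geq 1$ and have reduced slope $1-k<1$. For $(1,\O_{C_0})$, the constant section $1\in H^0(\O_{C_0})$ is not contained in any ideal sheaf $I\cdot\O_{C_0}$, so every proper subsheaf has $\delta=0$; a short case analysis by the type of $C_0$ (smooth elliptic, nodal or cuspidal, union of a line and a conic, three lines, etc.) shows that every such subsheaf has reduced slope less than $1$. The main obstacle is the middle paragraph, where the possibility that $\O_{C'}$ is a proper subsheaf of $G$ must be ruled out, and the genus bound for low-degree CM curves is the key input.
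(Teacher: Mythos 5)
Your proof is correct and follows the same skeleton as the paper's: both reduce, via the numerical wall analysis (the paper's proof says ``by numerical computation,'' pointing back to \S 2 where \cite{cc1} is invoked, exactly as you do), to a Jordan-H\"older decomposition $(0,F_{m+1})\oplus(1,F_{3m})$ with $F_{m+1}\isom\O_L$, so that the only real content is identifying $F_{3m}$. Where you diverge is in that identification. The paper disposes of it in one sentence: $F_{3m}$ ``is planar because it has a section and is pure,'' and then a semistable planar sheaf with Hilbert polynomial $3m$ is the structure sheaf of a planar cubic by \cite[Page 18]{drezet_maican}. You instead argue intrinsically in $\PP^3$: the section's image $\O_{C'}$ is the structure sheaf of a CM curve of degree $r'\le 3$, the $3$-stability of the Jordan-H\"older factor $(1,G)$ forces $\chi(\O_{C'})<r'-3\le 0$ whenever $\O_{C'}\subsetneq G$, and Hartshorne's genus bounds \cite{Har94} give $\chi(\O_{C'})\ge 0$ for CM curves of degree at most three, with equality only for planar cubics. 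Your route buys something concrete: it fills in the step the paper leaves implicit (purity plus the existence of a section alone do not force planarity; one needs the stability of the factor to exclude $r'<3$ and a large zero-dimensional cokernel, which is precisely your slope inequality), and the same genus-bound mechanism is what rules out the spurious numerical walls (e.g.\ at $\alpha=1$ or $\alpha=7$) if one prefers not to lean on \cite{cc1}, whose wall computation is carried out for $\PP^2$. The paper's route is shorter by outsourcing the classification to \cite{drezet_maican}. One cosmetic simplification: for the $3$-stability of $(1,\O_{C_0})$ no case analysis by the type of $C_0$ is needed, since the section generates $\O_{C_0}$, so every proper subpair has $\delta=0$, and sheaf-semistability of the structure sheaf of any planar curve gives $\chi(F')/r(F')\le 0<1$ uniformly.
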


\begin{proof}
By numerical computation, the possible type of wall is given by
\[
(0,F_{m+1})\oplus (1,F_{3m}),
\]
where the subscripts indicate the Hilbert polynomials of the sheaves (\S \ref{wallsection}). Obviously, $F_{m+1} \isom \O_L$.
The sheaf $F_{3m}$ is planar because it has a section and is pure. Thus, $F_{3m}$ is isomorphic to the structure sheaf of a planar cubic curve (\cite[Page 18]{drezet_maican}).
\end{proof}

\noindent
By wall crossing, the pairs in $\M^{\infty}(4m+1)$ of the form
\begin{equation}\label{eq6}
\ses{(0,\O_L)}{(1,F)}{(1,\O_{C_0})}
\end{equation}
are modified into pairs in $\M^{0^+}(4m+1)$ of the form
\[
\ses{(0,\O_{C_0})}{(1,F)}{(1,\O_L)}.
\]
We call the set of such pairs the \emph{wall crossing locus}.

The immediate consequence of Lemma \ref{wall} is that $\bX^{\infty}$ is dropped after the wall crossing.
We denote by $\bR^+$, $\bE_1^+$, $\bE_2^+$ and $\bP^+$ the loci in $\M^{0^+}(4m + 1)$ corresponding to
$\bR^\infty$, $\bE_1^\infty$, $\bE_2^\infty$, respectively, $\bP^\infty$.
Away from the wall crossing locus, the moduli space is unchanged.
By Lemma \ref{lem:h0}, after forgetting the section, we have $\bR^+ \isom \bR$, $\bE_1^+ \isom \bE_1$, and $\bE_2^+\isom \bE_2$.

\begin{lemma}
\hfill
\begin{enumerate}
\item The locus $\bR^{\infty}$ does not intersect the wall crossing locus. Thus, $\bR^{\infty} \isom \bR^{+}$.
\item The locus $\bP^{+}$ is the relative $0^+$-stable pairs space $\M^{0^+}(\PP \cU, 4m+1)$,
where $\cU$ is the universal rank three bundle over the Grassmannian $\operatorname{Gr}(3,4)=(\PP^3)^*$. In particular, $\bP^+$ is irreducible.
\end{enumerate}
\end{lemma}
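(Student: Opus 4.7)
For part (1), the plan is to verify that $\Hom(\O_L, \O_C) = 0$ for every line $L \subset \PP^3$ and every connected CM-curve $C$ of Hilbert polynomial $4m+1$. By Lemma~\ref{class} any pair in $\bR^\infty$ is of the form $(1, \O_C)$ for such a $C$, and by Lemma~\ref{wall} membership in the wall crossing locus forces a non-zero inclusion $\O_L \hookrightarrow \O_C$, so this vanishing suffices. I split into two cases. If $L$ is not contained in $C$ as a subscheme, then a non-zero morphism $\varphi \colon \O_L \to \O_C$ has image supported on the zero-dimensional intersection $L \cap C$, but by purity of $\O_C$ (the CM property) the image must be pure one-dimensional, a contradiction. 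If $L \subset C$, then $L$ is an irreducible component of $C$; by connectedness of $C$ together with $\deg C = 4 > 1$, the residual subscheme $C' = \overline{C \setminus L}$ meets $L$ in a non-empty zero-dimensional scheme $P$. The morphism $\varphi$ then factors through the kernel $K$ of $\O_C \to \O_{C'}$, which is supported on $L$ and identified with the twist $\O_L(-P)$ in the case where $L$ has multiplicity one in $C$; hence $\Hom(\O_L, \O_L(-P)) = \H^0(\O_L(-P)) = 0$ since $\deg P \geq 1$.

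For part (2), the plan is to identify $\bP^+$ with $\M^{0^+}(\PP \cU, 4m+1)$ via push-forward along the universal plane, and then derive irreducibility from that of a single fiber. Any $(s, F) \in \bP^+$ has $F$ planar, so $F \isom i_{H*}F_0$ for the unique plane $H$ containing the scheme-theoretic support of $F$ and a sheaf $F_0$ on $H$, and $s$ descends via adjunction to a section $s_0$ of $F_0$. Conversely, push-forward along $i_H$ sends a $0^+$-stable pair on $H$ to a planar $0^+$-stable pair on $\PP^3$. The preservation of $\alpha$-stability under $i_{H*}$ follows from the equivalence of categories between coherent sheaves on $H$ and coherent $\O_{\PP^3}$-modules scheme-theoretically supported on $H$, which matches Hilbert polynomials and subsheaf lattices. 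Carrying out this construction in families over the universal plane $\PP \cU \to (\PP^3)^*$ yields the claimed isomorphism. For irreducibility, each fiber over a plane $H$ is isomorphic to $\M^{0^+}_{\PP^2}(4m+1)$, which is irreducible by \cite{cc1} (using that $\gcd(4,1)=1$ so $0^+$-stable pairs are the same as stable sheaves equipped with a non-zero section, and $\M_{\PP^2}(4m+1)$ is irreducible by Le Potier); since $(\PP^3)^*$ is irreducible and the relative moduli is a projective family with irreducible fibers of constant dimension, the total space is irreducible.

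The main obstacle I foresee is in part (1), specifically the case in which $L$ has multiplicity greater than one in the CM-curve $C$: the identification of $K$ with $\O_L(-P)$ must then be replaced by a local analysis of the non-reduced structure of $C$ along $L$. A clean alternative approach uses the fact that a connected CM-curve $C$ of Hilbert polynomial $4m+1$ satisfies $h^0(\O_C) = 1$ in the reduced case, so any morphism $\O_L \to \O_C$ corresponds to a constant global section of $\O_C$ annihilated by $I_L$, which forces $I_L \subset I_C$ and hence $C \subset L$, contradicting $\deg C = 4$.
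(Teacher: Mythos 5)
Your reduction of part (1) to showing $\Hom(\O_L,\O_C)=0$ is sound, but the non-reduced case you flag at the end is a genuine hole, not a technicality: $\bR^{\infty}$ contains non-reduced connected CM curves of genus $0$ (for instance divisors of type $(3,1)$ on a smooth quadric, which contain a double line, and ribbon structures on conics), and for these neither your case analysis (which identifies the kernel $K$ with $\O_L(-P)$ only when $L$ has multiplicity one in $C$) nor your fallback (which you hedge to the reduced case) applies. The paper's actual proof is essentially your ``clean alternative'' without the hedge: it asserts $h^0(\O_C)=1$ for \emph{every} connected CM curve $C$ with Hilbert polynomial $4m+1$ and concludes in one line --- if $(1,\O_C)$ sat in the wall-crossing extension $\ses{(0,\O_L)}{(1,F)}{(1,\O_{C_0})}$ then, since $\H^1(\O_L)=0$, one gets $h^0(F)=h^0(\O_L)+h^0(\O_{C_0})=2$, contradicting $h^0(\O_C)=1$. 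Note that your own annihilator argument (a constant section $c\neq 0$ with $I_L\cdot c=0$ forces $I_L\subset I_C$, i.e.\ $C\subset L$) works verbatim for non-reduced $C$ once one knows $\H^0(\O_C)=\CC$; what could spoil it is a nilpotent global section, so the only missing ingredient is $h^0(\O_C)=1$ for the non-reduced members (true, and checkable on the explicit degenerations, e.g.\ by K\"unneth for quadric divisors, though the paper itself only asserts it). Once that is in place, your two-case analysis is superfluous.

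For part (2), your identification of $\bP^+$ with the relative pair space via pushforward along the unique plane agrees with the paper, which states it without proof. However, your parenthetical justification of irreducibility of the fiber --- that $0^+$-stable pairs ``are the same as'' stable sheaves with a non-zero section, so irreducibility follows from Le Potier's irreducibility of $\M_{\PP^2}(4m+1)$ --- is not a valid inference. The forgetful map $\M_{\PP^2}^{0^+}(4m+1)\to \M_{\PP^2}(4m+1)$ has fibers $\PP(\H^0(F))$, and the Brill--Noether locus $h^0(F)=2$ is \emph{nonempty} for planar sheaves with Hilbert polynomial $4m+1$ (sheaves of the form $\O_C(-p)(1)$; cf.\ Lemma \ref{lem:h0} and resolution \eqref{resolution_3}), so the map is not an isomorphism and irreducibility of the target does not formally transfer to the source: one must rule out a separate component sitting over the Brill--Noether locus. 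This is precisely why the paper invokes \cite[Proposition 4.4]{cc1}, which exhibits $\M_{\PP^2}^{0^+}(4m+1)$ as the blow-up of $\M_{\PP^2}(4m+1)$ along the Brill--Noether locus, hence irreducible. Since you do cite \cite{cc1}, this gap is reparable by pointing at that specific statement, but the argument you actually give would equally ``prove'' irreducibility in situations where it can fail, so as written it does not stand on its own.
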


\begin{proof}
The structure sheaf $\O_C$ of a connected CM-rational quartic curve $C\in \bR^{\infty}$ does not fits into the short exact sequence \eqref{eq6} because of $h^0(\O_C)=1$, hence (1) follows.
For (2), we can apply the wall crossing of \cite{cc1}.
Note that, by the results of \cite[Proposition 4.4]{cc1}, $\M_{\PP^2}^{0^+}( 4m+1)$ is a blow-up of $\M_{\PP^2}(4m+1)$ along the Brill-Noether locus, hence it is irreducible.
It follows that $\bP^+$ is irreducible.
\end{proof}

\noindent
Thus far we have shown that $\bR^{+}$ and $\bP^{+}$ are irreducible components of $\M^{0^+}(4m+1)$.
We denote the non-planar wall crossing loci by $W^\infty$ and $W^+$ respectively.
We will next show that $\M^{0^+}(4m+1)\setminus (\bR^{+} \cup \bP^{+})$ is contained in the closure of $\bE_1^{+}\setminus W^+$,
denoted $\bE^+$, which is irreducible by virtue of Lemma \ref{lem:e1}.

Recall that in Proposition \ref{prop:einf}, we decomposed $\bE^\infty_2$ into two subsets
according to whether the support of the sheaf is the union of a line with a cubic curve or the union of a double line with a conic curve.
After wall crossing, we also have the two corresponding loci, which we denote $\bE_{2a}^+$ and $\bE_{2b}^+$.

\begin{proposition}\label{prop:E2}
We have the decomposition $\bE_2^{+}\setminus W^{+} = \bE_{2a}^+ \cup \bE_{2b}^+$, where
\begin{enumerate}
\item $\bE_{2a}^+$ is the union of sets of the form
$\PP(\Ext^1(\CC_p, \O_C)) \setminus \PP(\Ext^1(\CC_p, \O_L(-1))) \isom \PP^1 \setminus \{ \text{\emph{pt}} \}$,
with $C = L \cup C_0$ for a cubic curve $C_0$, $L\nsubseteq \langle C_0\rangle$, $\{ p \}=C \cap L \subset \sing(C_0)$;
\item $\bE_{2b}^+$ is the union of sets of the form
$\PP(\Ext^1(\CC_p, \O_C)) \setminus \PP(\Ext^1(\CC_p, \O_L(-1))) \isom \PP^1 \setminus \{ \text{\emph{pt}} \}$,
with $C = L^2 \cup Q$ for a conic curve $Q$, $L\subset  \langle Q \rangle$, $p \in L$, $p_1(p)=p_2(p)=0$.
\end{enumerate}

\end{proposition}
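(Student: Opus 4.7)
The plan is, for each pair $(p,C)$ in one of the two strata of Proposition~\ref{prop:einf}(2), to identify the wall-crossing locus $W^\infty$ inside the line $\PP(\Ext^1(\CC_p,\O_C))\isom \PP^1$ of extensions $\ses{\O_C}{F}{\CC_p}$ as a single distinguished point, namely $\PP(\Ext^1(\CC_p,\O_L(-1)))$. Once that is established, deleting $W^+$ from $\bE_2^+$ removes exactly one point from each $\PP^1$, and the remaining punctured lines glue together into the two families $\bE_{2a}^+$ and $\bE_{2b}^+$ corresponding to the two strata.

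The first step is to produce, in both cases, the short exact sequence
\[
\ses{\O_L(-1)}{\O_C}{\O_{C_0}}
\]
with $C_0$ a planar cubic. In case (2)(a) this is tautological since $C=L\cup C_0$ with $L\not\subset \langle C_0\rangle$. In case (2)(b) where $C=L^2Q$, I would take $C_0=L\cup Q$ inside the plane $\langle Q\rangle$; a direct computation with the ideals $I_C=\langle x^2,xy,xq_1+yq_2\rangle$ and $I_{C_0}=\langle x,yq_2\rangle$ identifies the kernel of $\O_C\twoheadrightarrow \O_{C_0}$ with the line bundle generated by the class of $x$, which is $\O_L(-1)$. Applying $\Hom(\CC_p,-)$ and using $\Hom(\CC_p,\O_{C_0})=0$ by purity, I obtain an injection $\Ext^1(\CC_p,\O_L(-1))\hookrightarrow \Ext^1(\CC_p,\O_C)$. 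Since $p\in L$ in both strata, a local computation (or the restriction sequence $\ses{\O_L(-1)}{\O_L}{\CC_p}$) yields $\Ext^1(\CC_p,\O_L(-1))\isom \CC$, which gives the distinguished point.

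The second step is the two-way identification of this point with $W^\infty$. For the forward direction, an extension $F$ whose class lies on this distinguished line is the pushout of $\O_L(-1)\hookrightarrow \O_C$ along $\O_L(-1)\hookrightarrow \O_L$, and therefore fits into $\ses{\O_L}{F}{\O_{C_0}}$, which is the destabilizing sequence of Lemma~\ref{wall}, so $(1,F)\in W^\infty$. For the converse, given $F\in \bE_2^\infty\cap W^\infty$ admitting both $\ses{\O_C}{F}{\CC_p}$ and $\ses{\O_L}{F}{\O_{C_0}}$, I would chase the intersection $\O_L\cap \O_C$ inside $F$: the composite $\O_L\hookrightarrow F\twoheadrightarrow \CC_p$ cannot vanish, for otherwise $\O_L\hookrightarrow \O_C$ would force the zero-dimensional quotient $\O_L/\O_L(-1)$ to embed into the pure sheaf $\O_{C_0}$. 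Hence the composite is surjective, $\O_L\cap \O_C=\O_L(-1)$, and the class of $F$ lies in the image of $\Ext^1(\CC_p,\O_L(-1))$. The main technical point is case (2)(b), where one must check that the destabilizing line $L\subset F$ is the specific line appearing in the defining ideal of $C$ rather than some other line of the plane $\langle Q\rangle$; this is handled via the length-$2$ analysis of the non-reduced structure of $\O_C$ along $L$, which pins down $L$ uniquely as the reduced support of the embedded component.
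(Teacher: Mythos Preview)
Your setup is exactly the paper's: both arguments hinge on the short exact sequence $\ses{\O_L(-1)}{\O_C}{\O_{C_0}}$ and the resulting long exact sequence in $\Ext(\CC_p,-)$, which picks out the distinguished point $\PP(\Ext^1(\CC_p,\O_L(-1)))$ inside $\PP(\Ext^1(\CC_p,\O_C))\isom\PP^1$. Your forward direction (the pushout showing that the distinguished extension contains $\O_L$, hence lies in $W^\infty$) is correct and is actually left implicit in the paper's proof.

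The substantive difference is in the other inclusion. The paper does \emph{not} argue by your converse diagram chase. Instead it observes that when $\delta(F)\neq 0$ the sheaf $F$ fits into a non-split extension
\[
\ses{\O_L(-1)}{F}{G},\qquad G=\O_{C_0}(p)\in \M(3m+1),
\]
and then invokes Lemma~\ref{building} to conclude that $F$ is Gieseker-stable outright. This is stronger than ``$\O_L$ is not a subsheaf'' and immediately places $F$ in $\M^{0^+}\setminus W^+$.

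Your converse, by contrast, has a genuine gap. You assume $F\in W^\infty$ comes with a destabilizing sequence $\ses{\O_L}{F}{\O_{C_0}}$ using the \emph{same} $L$ and $C_0$ that appear in the decomposition of $C$. A priori the destabilizing line could be any line $L'\subset\operatorname{supp}(F)$ with $F/\O_{L'}$ a planar cubic structure sheaf. In case~(a) this is already an issue: if $C_0$ is reducible it contains lines $L'\neq L$, and your argument does not exclude them. You flag the analogous difficulty only in case~(b), and the proposed ``length-$2$ analysis of the non-reduced structure'' is not a proof. The cleanest fix is precisely the paper's route: prove stability directly via Lemma~\ref{building}, which rules out \emph{every} destabilizing subsheaf at once and makes the identification of the specific line unnecessary.
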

\begin{proof}
Consider the exact sequence
\[
0=\Ext^0(\CC_p, \O_{C_0}) \to \Ext^1(\CC_p, \O_L(-1)) \isom \CC \to \Ext^1(\CC_p, \O_C) \xrightarrow{\delta} \Ext^1(\CC_p, \O_{C_0}) \isom \CC.
\]
If $F \in \PP(\Ext^1(\CC_p, \O_C))$ has non-zero image $G = \delta(F)$, then $F$ fits into the non-split short exact sequence
\[
\ses{\O_L(-1)}{F}{G}.
\]
By below Lemma \ref{building}, $F$ is a stable sheaf and, thus, it does not contain $\O_L$.
\end{proof}
\begin{lemma}\label{building}
Let $G$ be a stable sheaf with Hilbert polynomial $(d-1)m+1$. Let $L$ be a line.
Then, every sheaf $F$ fitting into the non-split short exact sequence
\begin{equation}
\ses{\O_L(-1)}{F}{G}
\end{equation}
is stable with Hilbert polynomial $dm+1$.
\end{lemma}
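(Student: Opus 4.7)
The plan is to verify the two defining conditions of Gieseker stability for $F$: purity of dimension one, and the slope inequality $p(F') < p(F)$ for every proper nonzero subsheaf $F' \subset F$. Additivity of Hilbert polynomials immediately gives $\chi(F(m)) = dm + 1$, hence $p(F) = m + 1/d$.

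For purity, any zero-dimensional subsheaf $T \subset F$ would map to a zero-dimensional subsheaf of $G$, which must vanish by purity of the stable sheaf $G$; hence $T \subset \O_L(-1)$. But $\O_L(-1)$ is pure on the line $L$, so $T = 0$.

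For the slope inequality, given $F' \subset F$ proper nonzero, I would set $A := F' \cap \O_L(-1)$ and let $B$ be the image of $F'$ in $G$, obtaining a short exact sequence $0 \to A \to F' \to B \to 0$ with $A \subset \O_L(-1)$ and $B \subset G$. Since $F'$ is pure, $A$ is either $0$ or of the form $\O_L(-1-k)$ for some $k \geq 0$, so $\chi(A) \leq 0$ and $\deg A \in \{0,1\}$. The decisive numerical fact is that every proper nonzero subsheaf $B \subset G$ satisfies $\chi(B) \leq 0$: stability of $G$ gives $(d-1)\chi(B) < \deg(B) \leq d-1$, whence $\chi(B) < 1$.

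A short case analysis on the pair $(A,B)$ then finishes the proof. In every case \emph{except} $A = 0$ with $B = G$, the bounds $\chi(A), \chi(B) \leq 0$ force $\chi(F') = \chi(A) + \chi(B) \leq 0$ while $\deg F' \geq 1$, yielding $p(F') \leq 0 < 1/d$. The exceptional case $A = 0$, $B = G$ would make $F' \to G$ an isomorphism and thus produce a splitting of the given extension, contradicting the non-split hypothesis. The conceptual heart of the argument is this one exclusion, which is exactly where non-splitness enters; the main potential obstacle is to notice the sharp bound $\chi(B) \leq 0$ for proper $B \subset G$, since the naive bound $p(B) < 1/(d-1)$ alone would not be strong enough to conclude $p(B) < 1/d$.
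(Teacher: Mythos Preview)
Your approach is essentially identical to the paper's: intersect a subsheaf $F'$ with $\O_L(-1)$, push it forward to $G$, and bound the Euler characteristics of the two pieces. The paper does not bother to check purity separately, but your argument for that is fine.

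There is, however, a small slip in your case analysis. You assert that ``in every case except $A=0$ with $B=G$, the bounds $\chi(A),\chi(B)\le 0$ force $\chi(F')\le 0$.'' But when $B=G$ and $A\neq 0$, you have $\chi(B)=\chi(G)=1$, so the inequality $\chi(B)\le 0$ is simply false there and your stated reasoning does not apply. This case still works, but for a different reason: since $F'\subsetneq F$ and $B=G$, the inclusion $A\subset\O_L(-1)$ must be proper (otherwise the five lemma gives $F'=F$), hence $A\simeq\O_L(-1-k)$ with $k\ge 1$ and $\chi(A)\le -1$, so $\chi(F')=\chi(A)+1\le 0$. The paper handles exactly this by splitting into the two cases $G'\neq G$ and $G'=G$, observing in the latter that both $K\neq 0$ (non-splitness) and $K\neq\O_L(-1)$ (properness) are needed to get $\chi(K)\le -1$. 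Once you insert this missing subcase, your proof is complete and matches the paper's.
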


\begin{proof}
Let $F'$ be a subsheaf of $F$. Let $G'$ be its image in $\G$ and $K = F' \cap \O_L(-1)$.
We have $\chi(K) \le 0$. If $G' \neq G$, then $\chi(G') \le 0$, hence $\chi(F') \le 0$.
If $G' = G$, then $K \neq 0$ and $K \neq \O_L(-1)$, hence $\chi(K) \le -1$, hence $\chi(F') \le 0$.
\end{proof}




\noindent
Next we will show that the wall crossing locus is contained in $\bE^+$.

\begin{lemma}
The variety $W^{+}\cup\bE_{2a}^+$ is irreducible of dimension $15$ and is contained in $\bE^+$.
\end{lemma}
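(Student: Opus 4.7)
The plan is to exhibit $W^{+}\cup\bE_{2a}^+$ as the image of a single irreducible fifteen-dimensional variety, constructed as a blow-up of an incidence variety equipped with an elementary modification of the universal family, and then to establish containment in $\bE^{+}$ by explicit one-parameter deformations of the marked point.

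First, I would parameterize $W^{+}$. Each $F\in W^{+}$ comes by wall-crossing at $\alpha=3$ from a non-split extension $0\to\O_{C_0}\to F\to\O_L\to 0$ with $L$ a line and $C_0$ a planar cubic, $L\not\subset\langle C_0\rangle$ (else $F$ would be planar). A local computation with the Koszul resolution of $\O_L$ will show $\mathcal{H}om(\O_L,\O_{C_0})=0$ and $\mathcal{E}xt^1(\O_L,\O_{C_0})$ a skyscraper of length one supported on $L\cap C_0$, regardless of whether the intersection point is smooth on $C_0$ or lies at $\sing(C_0)$; hence $\Ext^1(\O_L,\O_{C_0})\isom\CC$ whenever $L\cap C_0\ne\emptyset$ and vanishes otherwise. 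Stability forces $L\cap C_0\ne\emptyset$, and the extension is then unique up to scalar. The incidence $\{(L,C_0,p):p\in L\cap C_0,\ L\not\subset\langle C_0\rangle\}$ projects to $(C_0,p)$ as an open subset of a $\PP^2$-bundle (lines through $p$ in $\PP^3$) over the irreducible universal planar cubic of dimension $3+9+1=13$, giving irreducibility and total dimension $15$; the projection to $(L,C_0)$ is birational, so $W^{+}$ is irreducible of dimension $15$.

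Next, I would incorporate $\bE_{2a}^+$. At $(L,C_0)\in\mathcal{B}:=\{(L,C_0):L\cap C_0\ne\emptyset,\ L\not\subset\langle C_0\rangle\}$ with $L$ through $p:=\sing(C_0)$, the same $F\in W^{+}$ also fits in $0\to\O_C\to F\to\CC_p\to 0$ with $C=L\cup C_0$; Lemma~\ref{jumpext}(2) yields $\Ext^1(\CC_p,\O_C)\isom\CC^2$, and $F$ corresponds to the distinguished point of $\PP^1=\PP(\Ext^1(\CC_p,\O_C))$ lying in the image of $\PP(\Ext^1(\CC_p,\O_L(-1)))$, while by Proposition~\ref{prop:E2} the remaining $\PP^1\setminus\{\mathrm{pt}\}$ parameterizes $\bE_{2a}^+$. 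The sublocus $\mathcal{B}_0\subset\mathcal{B}$ on which $L$ meets $\sing(C_0)$ is irreducible of dimension $3+8+2=13$, codimension two. I would then blow up $\mathcal{B}$ along $\mathcal{B}_0$ to obtain an irreducible fifteen-dimensional variety $\widetilde{\mathcal{B}}$ whose exceptional divisor is a $\PP^1$-bundle over $\mathcal{B}_0$, and perform an elementary modification of the universal family along this divisor, in the spirit of \cite{CK11, CHK12}, so that the classifying map to $\M^{0^+}(4m+1)$ has image exactly $W^{+}\cup\bE_{2a}^+$, thereby proving irreducibility and dimension $15$.

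For the containment in $\bE^{+}=\overline{\bE_1^+\setminus W^+}$, I would write each $F\in W^{+}\cup\bE_{2a}^+$ as the extension of $\CC_p$ by $\O_C$ with $C=L\cup C_0$ and deform the marked point via a section $p_t\in C_0$ with $p_t\notin L$ for $t\ne 0$. For such $t$, $\Ext^1(\CC_{p_t},\O_L(-1))=0$, so the unique extension $F_t$ is stable and lies in case~(1)(b) of Proposition~\ref{prop:einf}, hence in $\bE_1^+\setminus W^+$. When $F$ lies in the generic stratum of $W^+$ (i.e.~$(L,C_0)\in\mathcal{B}\setminus\mathcal{B}_0$), the moduli-theoretic limit of $F_t$ at $t=0$ equals $F$ immediately by the wall-crossing at $\alpha=3$. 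When $(L,C_0,p)\in\mathcal{B}_0$ (so $\dim\Ext^1(\CC_p,\O_C)=2$), the limit class in $\PP^1$ depends on the tangent direction of the approach $p_t\to p_0$; the hard part will be to show that by varying this direction every class in $\PP^1$ (including the distinguished $W^+$-point) is realized, which will follow from a direct calculation of the specialization map from tangent directions of $C_0$ at $p_0$ to $\PP(\Ext^1(\CC_{p_0},\O_C))$ and will rely again on the modification technique of \cite{CK11, CHK12}.
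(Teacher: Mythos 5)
Your construction has the right architecture (an irreducible parameter space dominating the union via a blow-up plus elementary modification, then containment from general points plus irreducibility), and it closely parallels the paper's, but the decisive step is missing. Since $\dim \bE_{2a}^+ = 14 < 15$, the union $W^+ \cup \bE_{2a}^+$ is irreducible if and only if $\bE_{2a}^+$ lies in the closure of $W^+$, and this is exactly what your assertion that ``the classifying map \dots has image exactly $W^{+}\cup\bE_{2a}^+$'' must carry. To get it, you need to prove that the specialization map from normal directions of $\mathcal{B}_0$ in $\mathcal{B}$ (equivalently, the restricted Kodaira--Spencer map along your exceptional divisor) surjects onto the space of stable extension classes over each configuration in $\mathcal{B}_0$. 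You flag this yourself as ``the hard part'' in your final paragraph but never supply it, and it is precisely the core of the paper's proof: there one takes the $\PP^1$-bundle $P = \PP(\cExt^1_p(q_1^*\cF_1, q_2^*\cF_2(-1)\oplus q_3^*\cF_3))$ over the triple incidence variety $T$ of $(p, C_0, L)$, blows up the section $s(T)$ of split sheaves $\O_L(-1)\oplus\O_{C_0}(p)$, and proves the surjectivity $N_{s(T)/P,s} \twoheadrightarrow \Ext^1_{\PP^3}(\O_{C_0}(p), \O_L(-1))$ by exhibiting the inclusion $N_{T_{\sing}/T,\pi(s)} \subset N_{s(T)/P,s}$ with $N_{T_{\sing}/T,\pi(s)} \isom \Ext^1_{C_0}(\CC_p,\CC_p)^*$, and then chaining Serre duality on $C_0$, the base-change sequence \eqref{thomas1}, and \cite[Theorem 13]{maican_duality} to identify this space with $\Ext^1_{\PP^3}(\O_{C_0}(p),\O_L(-1))$. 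Without an analogue of this computation, your argument only shows that the image contains $W^+$ together with an undetermined subset of $\bE_{2a}^+$, so neither the irreducibility nor the containment of $\bE_{2a}^+$ in $\bE^+$ is established.

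There is also a concrete misidentification in your setup: you place the $W^+$-sheaf at ``the distinguished point of $\PP^1 = \PP(\Ext^1(\CC_p,\O_C))$ lying in the image of $\PP(\Ext^1(\CC_p,\O_L(-1)))$.'' That point is the push-out sheaf, which contains $\O_L$ as a destabilizing subsheaf and is therefore \emph{unstable}; it is exactly the point excluded in Proposition \ref{prop:E2} (and flagged in the paper's introduction). The sheaf of $W^+$ over such a configuration is a different, stable point of the same $\PP^1$, characterized by fitting into $0\to\O_{C_0}\to F\to\O_L\to 0$; consequently the marked point your specialization map is supposed to hit is misplaced, and the correct target of the normal-direction analysis is the full space $\Ext^1_{\PP^3}(\O_{C_0}(p),\O_L(-1))$ rather than the $\PP^1$ with your labelling. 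Two smaller points: your claim that $\Ext^1(\O_L,\O_{C_0})\isom\CC$ holds ``regardless'' of the singularity type needs verification at worse singularities of $C_0$ (for instance when $C_0$ has a point of multiplicity three on $L$), a computation the paper sidesteps by parameterizing the wall-crossing sheaves through $T$ rather than through $\Ext^1(\O_L,\O_{C_0})$; on the other hand, your dimension counts (your $\mathcal{B}$, $\mathcal{B}_0$ match the paper's $T$, $T_{\sing}$ of dimensions $15$ and $13$) and your deformation $p_t \to p$ in the last paragraph, showing that general points lie in $\bE^+$, are sound and agree with the paper.
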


\begin{proof}
Consider the extension
\begin{equation}
\label{eq4}
\ses{\O_{C_0}}{F}{\O_L}.
\end{equation}
Denote $\{p\} = L \cap {C_0}$.
Let $H$ be the plane containing ${C_0}$. Tensoring \eqref{eq4} with $\O_H$ we get the exact sequence
\[
0 = \Tor_1^{\O_{\PP^3}} (\O_L, \O_H) \to \O_{C_0} \to \F_{| H} \to \CC_{p} \to 0
\]
from which we see that $\F_{| H} \isom \O_{C_0}(p)$ because $\Ext_H^1(\CC_p,\O_{C_0})=\CC$. We obtain the extension
\begin{equation}
\label{eq5}
0 \to \O_L(-1) \to F \to \O_{C_0}(p) \to 0.
\end{equation}
Conversely, by Lemma \ref{building}, any sheaf of this form is stable.
Let
\[
T = \{(p,{C_0},L) \mid \{p\}=L\cap {C_0}\quad \text{and} \quad L \nsubseteq H=\langle C_0 \rangle \}
\]
where ${C_0}$ is a plannar cubic curve contained in the plane $H$.
Consider the subset $T_{\sing} \subset T$ given by the condition $p\in \sing({C_0})$.
Obviously, $T$ is a $(\PP^2 \setminus \PP^1)$-bundle over the universal planar cubic curve.
Here $\PP^2 \setminus \PP^1$ parametrizes the choice of line $L$.
The space $T_{\sing}$ has the same bundle structure over the singular cubic curves,
hence $T$ and $T_{\sing}$ are smooth of dimension $15$, respectively, $13$.

We claim that $W^{+}\cup\bE_{2a}^+$ is irreducible. Let us consider a $\PP^1$-bundle over $T$ defined by a relative extension sheaf.
Let us denote the natural projection maps:
\[
q_1:T\times \PP^3 \lr \PP^3\times \PP^3, \quad q_2:T\times \PP^3\lr Gr(2,4)\times \PP^3,
\]
\[
q_3:T\times \PP^3\lr \PP(\mathrm{Sym}^3\cU)\times \PP^3, \quad p:T\times \PP^3\lr T,
\]
where $U$ is the universal rank $3$-bundle over $Gr(3,4)=(\PP^3)^*$.
Let $\cF_1$, $\cF_2$, and $\cF_3$ be the universal families of points, lines, and planar cubics, respectively.
Let
\[
\pi: P:=\PP(\cExt_p^1(q_1^*\cF_1,q_2^*\cF_2(-1)\oplus q_3^*\cF_3)) \rightarrow T
\]
be the structure morphism of the projectivized relative extension sheaf over $T$. Then the tautological family $\cF$ parameterized by $P$ (\cite[Example 2.1.12]{huybrechts_lehn})
define a rational map
\[
\Psi:P\dashrightarrow W^{+}\cup\bE_{2a}^+.
\]
By some diagram chasing, the map $\Psi$ is well-defined on the complement of the union of two disjoint sections $s(T)\cup s'(T)\subset P$ consisting of sheaves of the forms $\{\O_L(-1)\oplus \O_{C_0}(p)\} \cup \{\O_L\oplus \O_{C_0}\}$ because it has the destabilizing quotient sheaf $\O_L(-1)$ and $\O_{C_0}$ respectively. Because of the existence of relative Quot scheme, it forms a flat family of sheaves $\cQ$ over $s(T)$. Let $g:\widetilde{P}\rightarrow P$ be the blowing-up of $P$ along $s(T)$. Let $\widetilde{s(T)}$ be the exceptional divisor.
Let
\[
\cF':=\mathrm{ker}(\widetilde{\cF}\twoheadrightarrow \widetilde{\cF}|_{\widetilde{s(T)}\times \PP^3} \twoheadrightarrow \widetilde{\cQ})
\]
be the composition of the surjective maps where $\widetilde{F}$ denotes the pull-back of the sheaf $F$ along the map $g\times \mathrm{id}:\widetilde{P}\times \PP^3 \rightarrow P\times\PP^3$.
Then the modification has the effect of the change of the subsheaf and quotient sheaf (cf. \cite[Definition 2.5]{cc1}). To finish the proof of our claim, it is enough to show that all stable sheaves parameterized by $W^{+}\cup\bE_{2a}^+$ aries from this modification. That is, let us show that the normal space
\[
N_{s(T)/P,s}\twoheadrightarrow \Ext_{\PP^3}^1(\O_{C_0}(p),\O_L(-1))
\]
is surjective where $s:=g(v)=[\O_L(-1)\oplus \O_{C_0}(p)]$ for $v\in \widetilde{s(T)}$. Note that there is an inclusion $N_{T_{\mathrm{sing}}/T,\pi(s)}\subset N_{s(T)/P,s}$ and the former space is isomorphic to $\Ext_{C_0}^1(\CC_p, \CC_p)^*$ (\cite[\S 3.1.1]{iena1}).
Also,
\[
\Ext_{C_0}^1(\CC_p, \CC_p)^*\cong \Hom_{C_0}(I_{p,C_0}, \CC_p)^*\cong \Ext^1_{C_0}(\CC_p,I_{p,C_0})
\]
\[
\cong \Ext_H^1(\CC_p,I_{p,C_0})\cong \Ext^1_{\PP^3}(\O_L(-1), I_{p,C_0}) \cong \Ext_{\PP^3}^1(\O_{C_0}(p),\O_L(-1))
\]
where the first one comes from $\ses{I_{p,C_0}}{\O_{C_0}}{\CC_p}$. The second one comes from the fact that the dualizing sheaf $\omega_{C_0}$ is a line bundle and thus one can apply a version of Serre duality. The third and fourth one come from the equation \eqref{thomas1} and $\O_L(-1)|_H\cong \CC_p$. The last one is \cite[Theorem 13]{maican_duality}. Hence we proved that there exists a surjective (rational) map
\[
\widetilde{\Psi}:\widetilde{P}\rightarrow W^{+}\cup\bE_{2a}^+
\]
which finish the proof of our claim.
As general points in $W^{+}\cup\bE_{2a}^+$ are contained in $\bE^+$ and since it is irreducible,
$W^{+}\cup\bE_{2a}^+$ itself is contained in $\bE^+$.
\end{proof}

\noindent
It remains to show that $\bE_{2b}^\infty\setminus W^{\infty}=\bE_{2b}^+\setminus W^{+}$ is contained in  $\bE^+$.
To this end, let $Z$ be the locus of the extension sheaves $F$ fitting into the short exact sequence
\[
\ses{\O_L(-1)}{F}{O_{LQ}(p)}
\]
such that $p\in Q$. Clearly, $\bE_{2b}^+$ is contained in $Z$. We will show that $Z$ is irreducible.
Since general elements in $Z$ are contained in $\bE^+$, this proves our assertion.
Consider the map
\[
\phi \colon Z \to \Hilb_{\PP^2}(m+1) \times \M_{\PP^2}(3m+1), \qquad [\F] \mapsto (L, [\O_{LQ}(p)]).
\]
It will turn out that $\phi$ is a projective bundle over some irreducible variety. This will prove that $Z$ is an irreducible variety.

\begin{lemma}
\label{lemma:S}
The locally closed subset $\mathbf{S} \subset \Hilb_{\PP^2}(m+1) \times \M_{\PP^2}(3m+1)$
of pairs $(L, [\O_{C_0}(p)])$ for which $C_0= L \cup Q$ and $p \in Q$ for a conic curve $Q \subset \PP^2$ is irreducible.
\end{lemma}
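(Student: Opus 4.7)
The plan is to realize $\mathbf{S}$ as the image of a surjective morphism from an explicit irreducible incidence variety. Concretely, I would set
\[
I = \{(L, Q, p) \in (\PP^2)^* \times \PP^5 \times \PP^2 : p \in \operatorname{Supp}(Q)\},
\]
where $\PP^5 = |\O_{\PP^2}(2)|$ parametrizes conics in $\PP^2$. First I would check that $I$ is irreducible of dimension $8$: the universal conic $\mathcal{U} \subset \PP^5 \times \PP^2$ is cut out by a single bihomogeneous polynomial of bidegree $(1,2)$, which is irreducible in $\CC[a_0, \ldots, a_5, x, y, z]$, so $\mathcal{U}$ is an irreducible hypersurface of dimension $6$, and $I \cong (\PP^2)^* \times \mathcal{U}$ is irreducible.

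Next I would restrict to the open subvariety $I^\circ \subset I$ where $L$ is not a component of $Q$, i.e.\ where the linear form defining $L$ does not divide the quadratic form defining $Q$. This open set is nonempty (take any smooth $Q$) and hence irreducible. Over $I^\circ$ the scheme-theoretic union $C_0 := L \cup Q$ is a plane cubic varying in a flat family of Hilbert polynomial $3m$, and $I_{p,C_0}^\dual$ is a stable sheaf of Hilbert polynomial $3m+1$; this produces a morphism
\[
\Phi \colon I^\circ \lra \Hilb_{\PP^2}(m+1) \times \M_{\PP^2}(3m+1), \quad (L, Q, p) \mapsto (L, [I_{p,C_0}^\dual]),
\]
whose image is contained in $\mathbf{S}$.

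Finally I would verify that $\Phi$ surjects onto $\mathbf{S}$. Given $(L, [\O_{C_0}(p)]) \in \mathbf{S}$, by definition $C_0 = L \cup Q$ with $p \in Q$; since $C_0$ has degree $3$ and $L$ has degree $1$, an inclusion $L \subset Q$ would force the scheme-theoretic union to have degree at most $2$, contradicting $\deg C_0 = 3$. Hence $(L, Q, p) \in I^\circ$ and its image under $\Phi$ is exactly $(L, [\O_{C_0}(p)])$. Because $I^\circ$ is irreducible and $\Phi$ is a surjective morphism, $\mathbf{S}$ is irreducible.

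The main obstacle I anticipate is not conceptual but technical: one must justify that $(L, Q, p) \mapsto I_{p, L \cup Q}^\dual$ is a genuine morphism of schemes into $\M_{\PP^2}(3m+1)$, which reduces to the flatness of the relative family of pure sheaves $I_{p, C_0}^\dual$ over $I^\circ$. Since the cubics $C_0$ form a flat family with constant Hilbert polynomial $3m$ and the formation of $I_{p,C_0}^\dual$ commutes suitably with base change for these families, this is a standard, if somewhat tedious, verification.
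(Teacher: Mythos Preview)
Your proposal is correct and follows essentially the same route as the paper. The paper also exhibits $\mathbf{S}$ as the image of a surjection from the product of the space of lines with the universal conic, phrased there as $\Hilb_{\PP^2}(m+1)\times \M_{\PP^2}^{\infty}(2m+2)$; your $I^\circ$ and the paper's domain are the same variety, and you are simply more explicit about the open restriction $L\nsubseteq Q$ and the flatness check needed to get an honest morphism.
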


\begin{proof}
Let $\M_{\PP^2}^{\infty}(2m+2)$ be the moduli space of pairs with Hilbert polynomial $2m+2$.
One can easily see that it is isomorphic to the universal conic curve, which is a $\PP^4$-bundle over $\PP^2$,
so is irreducible (\cite[Lemma 2.3]{cc1}). The morphism
\[
\Hilb_{\PP^2}(m+1) \times \M_{\PP^2}^{\infty}(2m+2) \to \mathbf{S}, \qquad (L,p,Q)\mapsto (L, \O_{Q\cup L}(p))
\]
is well-defined and surjective. Thus, $\mathbf{S}$ is irreducible.
\end{proof}

\begin{proposition}
The locus $Z$ is irreducible of dimension $14$.
\end{proposition}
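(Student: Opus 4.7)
The plan is to analyze the map $\phi$ and realize $Z$ generically as a projective bundle over an irreducible base, from which the irreducibility of $Z$ and the dimension $14$ will both follow. The base I have in mind is the relative version $\widetilde{\mathbf{S}}$ of the irreducible variety $\mathbf{S}$ of Lemma~\ref{lemma:S} over the Grassmannian $(\PP^3)^*$ of planes in $\PP^3$: for each plane $H\subset\PP^3$, the fiber $\mathbf{S}_H$ is irreducible of dimension $8$ by (the proof of) Lemma~\ref{lemma:S}, so $\widetilde{\mathbf{S}}$ is irreducible of dimension $11$. Any $F\in Z$ has a canonical underlying plane, namely the linear span of its planar support, so $\phi$ factors through $\widetilde{\mathbf{S}}$.

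Next I would analyze the fibers of $\phi$. Over a point $(L,[\O_{C_0}(p)])\in\widetilde{\mathbf{S}}$ with $C_0=L\cup Q$ and $p\in Q$, the fiber parametrizes isomorphism classes of non-split extensions
\[
\ses{\O_L(-1)}{F}{\O_{C_0}(p)},
\]
i.e.\ points of $\PP(\Ext^1_{\PP^3}(\O_{C_0}(p),\O_L(-1)))$. By Lemma~\ref{building} every such $F$ is stable and therefore lies in $Z$; conversely, $F\in Z$ determines $(L,C_0,p)$ intrinsically, as $\O_L(-1)\subset F$ is the unique maximal subsheaf of reduced Hilbert polynomial $m$ whose quotient is pure of polynomial $3m+1$. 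To upgrade this to a projective bundle structure I would show that $\dim\Ext^1_{\PP^3}(\O_{C_0}(p),\O_L(-1))=4$ on a dense open of $\widetilde{\mathbf{S}}$. The exact sequence \eqref{thomas2} with $Y=H$ reduces the computation to Ext groups on $\PP^2\cong H$, which in turn can be handled using the exact sequence $0\to\O_Q(p-L\cap Q)\to\O_{C_0}(p)\to\O_L\to 0$ on $H$ together with Serre duality.

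Once the generic rank is $4$, cohomology and base change produce a locally free relative Ext sheaf of rank $4$ on a dense open $\widetilde{\mathbf{S}}^{\circ}\subset\widetilde{\mathbf{S}}$, whose projectivization is a $\PP^3$-bundle mapping bijectively onto a dense open subset of $Z$. Since $\widetilde{\mathbf{S}}$ is irreducible of dimension $11$, it follows that $Z$ is irreducible of dimension $11+3=14$.

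The main obstacle I foresee is controlling possible jumps of $\dim\Ext^1_{\PP^3}(\O_{C_0}(p),\O_L(-1))$ along special strata of $\widetilde{\mathbf{S}}$ (for instance when $p\in L\cap Q$, or when $Q$ degenerates to a pair of lines). One must verify that such jumps occur only on proper closed subsets of $\widetilde{\mathbf{S}}$, so that the generic $\PP^3$-bundle picture really does close up to all of $Z$; this should be accessible via upper semicontinuity together with an explicit computation on a generic configuration in $\widetilde{\mathbf{S}}$.
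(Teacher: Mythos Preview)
Your overall architecture matches the paper's: realize $Z$ as the projectivization of a relative $\Ext^1$ sheaf over the irreducible $11$-dimensional base $\widetilde{\mathbf{S}}$, so that $\dim Z = 11+3 = 14$. The divergence, and the gap, is in how you handle the fibre dimension.

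You plan to compute $\dim\Ext^1_{\PP^3}(\O_{C_0}(p),\O_L(-1))$ only at a generic point of $\widetilde{\mathbf{S}}$ and then invoke upper semicontinuity. But semicontinuity tells you the dimension can only go \emph{up} on closed strata, not down; so a generic value of $4$ together with semicontinuity does \emph{not} force the fibre to stay $\PP^3$ everywhere. If the $\Ext$ jumped to $5$ on a codimension-$1$ locus of $\widetilde{\mathbf{S}}$ (and the strata you single out, $p\in L\cap Q$ or $Q$ reducible, are exactly of this type), the corresponding fibres would be $\PP^4$'s over a $10$-dimensional base, giving a second $14$-dimensional piece of $Z$ that your argument would not see. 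Your phrase ``so that the generic $\PP^3$-bundle picture really does close up to all of $Z$'' is precisely the step that is not justified by a generic computation plus semicontinuity.

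What the paper actually does---and what makes the argument work---is prove that $\Ext^1_{\O_{\PP^3}}(\O_{C_0}(p),\O_L(-1))\cong\CC^4$ at \emph{every} point of $\mathbf{S}$, not just generically. The computation uses the spectral sequence \eqref{thomas1} with $Y=L$ (not $Y=H$ as you propose), which reduces to computing $\O_{C_0}(p)|_L$ and $\Tor_1^{\O_{\PP^3}}(\O_{C_0}(p),\O_L)$. These are worked out explicitly in the two cases $p\notin L$ and $p\in L$; in the latter case one writes down the Freiermuth--Trautmann resolution of $\O_{C_0}(p)$ and restricts it to $L$. The upshot is that in both cases the two contributions sum to $4$ (namely $0+4$ when $p\notin L$ and $1+3$ when $p\in L$). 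With constant fibre dimension, the relative $\Ext$ is locally free of rank $4$ on all of $\widetilde{\mathbf{S}}$, and $Z$ is literally its projectivization---hence irreducible of dimension $14$. That constancy is the substance of the proof, and it is exactly the piece your plan omits.
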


\begin{proof}
In view of Lemma \ref{lemma:S}, it is enough to show that the morphism $Z \to \mathbf{S}$ is surjective
and that its fibers are irreducible of the same dimension.
We will prove that
\[
\Ext^1_{\O_{\PP^3}}(\O_{C_0}(p), \O_L(-1)) \isom \CC^4
\]
for all $(L, [\O_{C_0}(p)]) \in \mathbf{S}$.
From (\ref{thomas1}) we have the exact sequence
\begin{multline*}
0 \to \Ext^1_{\O_L}(\O_{C_0}(p)_{| L}, \O_L(-1)) \to \Ext^1_{\O_{\PP^3}}(\O_{C_0}(p), \O_L(-1)) \to \\
\Hom(\Tor_1^{\O_{\PP^3}}(\O_{C_0}(p), \O_L), \O_L(-1)) \to \Ext^2_{\O_L}(\O_{C_0}(p)_{| L}, \O_L(-1)) = 0
\end{multline*}
Assume firstly that $p \notin L$.
The long exact sequence of torsion sheaves associated to the short exact sequence
\[
0 \to \O_{C_0} \to \O_{C_0}(p) \to \CC_p \to 0
\]
yields the isomorphisms
\[
\O_{C_0}(p)_{| L} \isom {\O_{C_0}}_{| L} \isom \O_L, \qquad
\Tor_1^{\O_{\PP^3}}(\O_{C_0}(p), \O_L) \isom \Tor_1^{\O_{\PP^3}}(\O_{C_0}, \O_L) \isom \O_L(-1) \oplus \O_L(-3).
\]
We obtain the isomorphisms
\[
\Ext^1_{\O_L}(\O_{C_0}(p)_{| L}, \O_L(-1)) = 0, \qquad \Hom(\Tor_1^{\O_{\PP^3}}(\O_{C_0}(p), \O_L), \O_L(-1)) \isom \CC^4.
\]
Assume now that $p \in L$. We have the exact sequence
\[
0 \to 2\O(-3) \xrightarrow{\delta} 3\O(-2) \oplus \O(-1) \xrightarrow{\gamma} \O(-1) \oplus \O \to \O_{C_0}(p) \to 0
\]
\[
\delta = \left[
\ba{ll}
\phantom{-} x & \phantom{-} 0 \\
\phantom{-} 0 & \phantom{-} x \\
-y & -z \\
\phantom{-} 0 & -q_2
\ea
\right], \qquad \gamma = \left[
\ba{cccc}
y & z & x & 0 \\
0 & q_2 & 0 & x
\ea
\right]
\]
where $p$ is given by the ideal $\langle x, y, z \rangle$, $L$ is given by the ideal $\langle x, y \rangle$,
and $Q$ is given by the ideal $\langle x, q_2 \rangle$ (\cite[Proposition 3.5]{freiermuth_trautmann}).
Tensoring with $\O_L$, shows that $\O_C(p)_{| L}$ is isomorphic to the cokernel of the morphism
\[
\O_L(-2) \xrightarrow{\tiny \left[ \!\! \ba{c} {z}_{| L} \\ {q_2}_{| L} \ea \!\! \right]} \O_L(-1) \oplus \O_L
\]
and that $\Tor_1^{\O_{\PP^3}}(\O_{C_0}(p), \O_L)$ is isomorphic to the middle cohomology of the sequence
\[
2 \O_L(-3) \xrightarrow{\delta_L} 3\O_L(-2) \oplus \O_L(-1) \xrightarrow{\gamma_L} \O_L(-1) \oplus \O_L
\]
\[
\delta_L = \left[
\ba{ll}
0 & \phantom{-} 0 \\
0 & \phantom{-} 0 \\
0 & {-z}_{| L} \\
0 & {-q_2}_{| L}
\ea
\right], \qquad \gamma_L = \left[
\ba{cccc}
0 & {z}_{| L} & 0 & 0 \\
0 & {q_2}_{| L} & 0 & 0
\ea
\right],
\]
which is isomorphic to the cokernel of the morphism
\[
\O_L(-3) \xrightarrow{\tiny \left[ \!\! \ba{c} 0 \\ {z}_{| L} \\ {q_2}_{| L} \ea \!\! \right]} 2\O_L(-2) \oplus \O_L(-1).
\]
By hypothesis, $p$ is a point on $Q$, hence ${z}_{| L}$ divides ${q_2}_{| L}$.
It now becomes clear that
\[
\O_{C_0}(p)_{| L} \isom \CC_p \oplus \O_L \quad \text{and} \quad
\Tor_1^{\O_{\PP^3}}(\O_{C_0}(p), \O_L) \isom \CC_p \oplus \O_L(-2) \oplus \O_L(-1).
\]
We obtain the isomorphisms
\[
\Ext^1_{\O_L}(\O_{C_0}(p)_{| L}, \O_L(-1)) \isom \CC, \qquad \Hom(\Tor_1^{\O_{\PP^3}}(\O_{C_0}(p), \O_L), \O_L(-1)) \isom \CC^3.
\qedhere
\]
\end{proof}

\noindent
Summarizing the results obtained thus far, we have the following.

\begin{proposition}
The space $\M^{0^+}(4m+1)$ consists of three irreducible components $\bR^{+}$, $\bE^{+}$ and $\bP^{+}$.
\end{proposition}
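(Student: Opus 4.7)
The plan is an assembly argument: all the genuinely delicate work has been packaged into the preceding lemmas, so this final proposition is just a clean statement of their combined content. Starting from the decomposition of $\M^{\infty}(4m+1)$ in Proposition \ref{prop:einf} and the wall-crossing at $\alpha=3$ identified in Lemma \ref{wall}, the Jordan--H\"older type $(0,\O_L)\oplus(1,\O_{C_0})$ eliminates the entire stratum $\bX^\infty$ upon crossing the wall (no non-trivial flipped extension exists over a disjoint union of a line and a planar cubic), while the remaining $\bR^\infty,\bE_1^\infty,\bE_2^\infty,\bP^\infty$ transform into $\bR^+,\bE_1^+,\bE_2^+,\bP^+$, supplemented by the new non-planar wall-crossing locus $W^+$. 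This gives the covering
\[
\M^{0^+}(4m+1)=\bR^+\cup\bE_1^+\cup\bE_2^+\cup\bP^+\cup W^+.
\]

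Next, I would collapse the $\bE$-type and $W$-type pieces into the single component $\bE^+$. By construction, $\bE^+$ is the closure of $\bE_1^+\setminus W^+$, which is irreducible of dimension $17$ by Lemma \ref{lem:e1}. The containments $W^+\cup \bE_{2a}^+\subset \bE^+$ and $\bE_{2b}^+\setminus W^+\subset Z\subset \bE^+$ were established in the preceding two results (the latter because $Z$ is irreducible of dimension $14$ whose general point lies in $\bE^+$). Combining these inclusions with $\bE_1^+\subset \bE^+$ yields $\bE_1^+\cup \bE_2^+\cup W^+\subset \bE^+$, and the decomposition reduces to $\M^{0^+}(4m+1)=\bR^+\cup \bE^+\cup \bP^+$.

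Finally, I would certify that these three irreducible subvarieties are genuine components by checking that none is contained in another. Irreducibility of $\bR^+\isom \bR^\infty$ comes from \cite{NS03}, of $\bP^+$ from the blow-up description in \cite{cc1}, and of $\bE^+$ from Lemma \ref{lem:e1}. To separate them it suffices to exhibit a dense open of each whose generic point is incompatible with the other two: respectively a structure sheaf of a smooth non-planar rational quartic, a line bundle $\O_C(p)$ on a smooth non-planar elliptic quartic, and a generic planar sheaf. The real obstacle, already resolved in the pages above, was absorbing $W^+$ and $\bE_2^+$ into $\bE^+$ via the modification/deformation argument over the blow-up $\widetilde P\to P$ and the bundle-structure argument for $Z$; once those are in hand, the present proposition is pure bookkeeping.
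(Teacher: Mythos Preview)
Your proposal is correct and takes essentially the same approach as the paper, which presents this proposition merely as a summary (``Summarizing the results obtained thus far'') of the preceding lemmas without a separate proof. Your explicit check that $\bR^+$, $\bE^+$, $\bP^+$ are pairwise non-contained via generic points is a reasonable addition that the paper leaves implicit (effectively deferring it to the dimension count in Theorem~\ref{mainthm}).
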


\noindent
By forgetting the section, we arrive at our main theorem.

\begin{theorem}\label{mainthm}
The moduli space $\M_{\PP^3}(4m+1)$ consists of three irreducible components $\bR$, $\bE$ and $\bP$,
whose dimensions are 16, 17 and 20, respectively.
\end{theorem}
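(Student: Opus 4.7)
The plan is to deduce the theorem from the preceding proposition via the forgetful morphism $\M^{0^+}(4m+1) \to \M(4m+1)$, followed by a dimension count on each component.

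Because every stable sheaf $F \in \M(4m+1)$ admits at least one non-zero section, the forgetful map is surjective, and consequently the images of the three irreducible components $\bR^+$, $\bE^+$, $\bP^+$ are three irreducible subvarieties $\bR$, $\bE$, $\bP$ whose union is $\M(4m+1)$. By Lemma \ref{lem:h0}, every sheaf in $\bR^+$ or $\bE^+$ is nonplanar and hence satisfies $h^0(F)=1$, so the forgetful map restricts to an isomorphism on each of $\bR^+$ and $\bE^+$. The generic member of $\bR$ is the structure sheaf $\O_C$ of a smooth rational quartic curve $C$, and the generic member of $\bE$ is a line bundle $\O_C(p)$ on a smooth elliptic quartic curve $C$; both are nonplanar, while the generic member of $\bP$ is planar. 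Hence none of the three loci is contained in another, so $\bR$, $\bE$, $\bP$ are the three distinct irreducible components of $\M(4m+1)$.

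For the dimensions, I argue as follows. The component $\bR$ is birational to the Hilbert scheme component of rational quartic curves (Proposition \ref{prop:hilb41}(1)), which has dimension $16$ (for instance via its identification with a dense open subset of $\overline{M}_0(\PP^3,4)$). The component $\bE$ contains $\bE_1 \isom \bE_1^+$ as a dense open subset, and Lemma \ref{lem:e1} gives $\dim \bE_1 = 17$, so $\dim \bE = 17$. Finally, $\bP$ fibers over $(\PP^3)^* \isom \operatorname{Gr}(3,4)$ with fiber over a plane $H$ the Simpson moduli space of stable one-dimensional sheaves on $H \isom \PP^2$ with Hilbert polynomial $4m+1$; by Le Potier \cite{lepot} this fiber is irreducible of dimension $d^2+1 = 17$, giving $\dim \bP = 3 + 17 = 20$.

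All the substantive work has been done in the preceding sections, so there is no major obstacle at this final step. The only mildly delicate point is the use of Lemma \ref{lem:h0} to ensure that the forgetful map does not collapse $\bR^+$ or $\bE^+$ into $\bP$, guaranteeing that the three images remain three distinct irreducible components rather than merging.
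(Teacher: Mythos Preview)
Your approach is the same as the paper's: pass from the preceding proposition on $\M^{0^+}(4m+1)$ to $\M(4m+1)$ via the forgetful map, then count dimensions. The paper itself says little more than ``by forgetting the section'' at this step, so your level of detail is already greater than the original.

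There is, however, one genuine slip. You write that ``by Lemma \ref{lem:h0}, every sheaf in $\bR^+$ or $\bE^+$ is nonplanar and hence satisfies $h^0(F)=1$''. This is false: both $\bR^+ \cap \bP^+$ and $\bE^+ \cap \bP^+$ are nonempty (indeed \S\ref{sec:int} is devoted to describing $\bR \cap \bP$ and $\bE \cap \bP$), so $\bR^+$ and $\bE^+$ do contain planar sheaves. Lemma \ref{lem:h0} goes in the other direction: it says that $h^0(F)=2$ forces $F$ to be planar, not that membership in $\bR^+$ or $\bE^+$ forces $F$ to be nonplanar. Consequently the forgetful map need not be an isomorphism on all of $\bR^+$ or $\bE^+$ from this lemma alone; in fact the paper establishes $h^0(F)=1$ for every $F \in \bR$ only later, in \S\ref{sec:int}.

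The fix is easy and does not disturb the rest of your argument: replace ``every'' by ``the generic''. The generic member of $\bR^+$ (resp.\ $\bE^+$) is supported on a smooth rational (resp.\ elliptic) quartic, hence is nonplanar, hence has $h^0=1$. That is enough to conclude that the forgetful map is generically injective on $\bR^+$ and $\bE^+$, so $\dim \bR = \dim \bR^+ = 16$ and $\dim \bE = \dim \bE^+ = 17$, and to see that $\bR, \bE \not\subset \bP$. For the remaining non-containments you can combine the dimension inequalities ($\bP \not\subset \bR, \bE$ and $\bE \not\subset \bR$) with the observation that a smooth rational quartic $C$ gives $\ext^1(\O_C,\O_C) = h^0(N_{C/\PP^3}) = 16$, so $[\O_C]$ is a smooth point lying on a unique $16$-dimensional component, whence $\bR \not\subset \bE$ and $\bR \not\subset \bP$.
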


\section{The intersections of the irreducible components}
\label{sec:int}

In this section, we describe the intersections $\bR\cap \bP$ and $\bE\cap \bP$,
and we give a non-exhaustive list of sheaves in $\bR\cap \bE$.

\subsection{The intersections $\bR\cap \bP$ and $\bE\cap \bP$}

We begin by collecting some known facts about the relation of the space of finite maps to the component $\bR$.
When a finite map is birational to its image, we call it a \emph{birational finite map}.
By the following proposition,
the birational finite maps give sheaves in the boundary of the locus of structure sheaves of smooth rational quartic curves.

\begin{proposition}\label{prop:dawei}
Let $F_{0}(\PP^r,d)$ be the space of finite maps from a genus $0$ curve of degree $d$ to $\PP^r$. Then,
\begin{enumerate}
\item The space $F_0(\PP^r,d)$ is an irreducible projective variety of dimension $(d+1)(r+1)-3$.
\item Consider the rational map
\[
\Psi \colon F_{0}(\PP^r,d) \dashrightarrow \M_{\PP^r}(dm+1), \qquad
[f \colon C\rightarrow \PP^r]\mapsto f_*\O_C.
\]
Then $\Psi$ is injective over the locus of birational finite maps.
\end{enumerate}
\end{proposition}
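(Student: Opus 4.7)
My plan is to handle (1) by parametrizing $F_0(\PP^r,d)$ explicitly as a quotient of an open subset of a projective space, and to handle (2) by reconstructing birational finite maps from their pushforwards $f_*\O_C$.

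For (1), I would parametrize $F_0(\PP^r,d)$ as follows. A finite map $f\colon \PP^1\to\PP^r$ of degree $d$ is determined by an $(r+1)$-tuple $(\phi_0,\dots,\phi_r)$ of degree-$d$ binary forms on $\PP^1$ with no common zero, considered up to simultaneous scaling. Such tuples form a Zariski dense open subset $U$ of $\PP(H^0(\PP^1,\O(d))^{\oplus(r+1)})$, which is irreducible of dimension $(d+1)(r+1)-1$. Because $F_0(\PP^r,d)$ is obtained from $U$ by quotienting by the appropriate source-reparametrization action, it inherits irreducibility, and a direct count of the dimension of the acting group yields the asserted dimension $(d+1)(r+1)-3$.

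For (2), I will show that any birational finite map $f\colon C\to \PP^r$ is recoverable from $f_*\O_C$ up to the equivalence defining $F_0$. Since $f$ is finite, the scheme-theoretic support of $f_*\O_C$ is the image $X=f(C)$, so $X$ is recovered from the sheaf. Because $f$ is birational onto $X$ and $C=\PP^1$ is smooth, the map $f$ coincides with the normalization morphism $\tilde X\to X$, and $f_*\O_C$ identifies with the integral closure $\tilde\O_X$ of $\O_X$ inside its sheaf of total quotient rings. The $\O_X$-algebra structure on $f_*\O_C$ is then intrinsic---it is the unique lift of the module structure to a subalgebra of the constant sheaf of rational functions on $X$---and taking relative $\operatorname{Spec}$ reconstructs $f$. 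Consequently, if two birational finite maps $f_1, f_2$ satisfy $(f_1)_*\O_{C_1}\isom(f_2)_*\O_{C_2}$ as $\O_{\PP^r}$-modules, the isomorphism automatically respects the algebra structures by the uniqueness above, and relative $\operatorname{Spec}$ produces an isomorphism $C_1\isom C_2$ over $\PP^r$ intertwining $f_1$ and $f_2$.

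The main subtle point---indeed the only step where anything nontrivial is happening---is the identification of $f_*\O_C$ with the normalization $\tilde\O_X$ and the consequent recovery of the algebra structure from the module structure alone. Both are standard consequences of the birationality and smoothness hypotheses, but they must be verified carefully; once in hand, the injectivity of $\Psi$ over the birational locus follows formally.
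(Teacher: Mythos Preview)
Your argument for (2) is correct and amounts to an explicit version of what the paper obtains by citing \cite[Proposition~3.18]{CCC11}: once you recover the image $X$ as the scheme-theoretic support of $f_*\O_C$, the birationality and smoothness of the source force $f$ to be the normalization of $X$, so the map is determined.

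Your treatment of (1), however, has a real gap. You parametrize only finite maps with source $\PP^1$, but in this paper $F_0(\PP^r,d)$ includes finite maps from \emph{nodal} arithmetic-genus-$0$ curves --- see for instance Proposition~\ref{prop:reddeg3}, where the domain is a pair of lines. Your open set $U$ modulo $\operatorname{PGL}(2)$ therefore describes only a dense open subset of $F_0(\PP^r,d)$. That is enough for irreducibility and the dimension count (once one knows the boundary has smaller dimension), but it does not establish that $F_0(\PP^r,d)$ is a \emph{projective} variety: the quotient $U/\operatorname{PGL}(2)$ is only quasi-projective, and you have neither constructed a compactification nor argued that one exists as a scheme. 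The paper's proof bypasses this by citing the irreducibility of $\overline{M}_0(\PP^r,d)$ from \cite{FP97} and the existence of a small contraction $\overline{M}_0(\PP^r,d)\to F_0(\PP^r,d)$ from \cite[Proposition~3.11]{CCC11}; projectivity, irreducibility, and the dimension then follow at once from the corresponding properties of the Kontsevich space. If you want to avoid those citations you must supply the boundary strata (maps from reducible curves) and show they assemble into a projective scheme --- which is essentially the construction of the moduli of stable maps.
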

\begin{proof}
By \cite[Theorem 2]{FP97},
the moduli space $\overline{M}_{0}(\PP^r,d)$ of stable maps of genus $0$ and degree $d$ is an irreducible variety.
Also, there exists a small contraction morphism $\overline{M}_{0}(\PP^r,d) \to F_{0}(\PP^r,d)$ (\cite[Proposition 3.11]{CCC11}).
This proves the first claim. Part (2) follows directly from the proof of \cite[Proposition 3.18]{CCC11}.
\end{proof}

\begin{proposition}
\label{thm:RcapP}
Let $H \subset \PP^3$ be a plane and let $C \subset H$ be an irreducible quartic curve
having three distinct nodal singular points $P_1$, $P_2$, $P_3$.
Then the unique extension of $\CC_{P_1} \oplus \CC_{P_2} \oplus \CC_{P_3}$ by $\O_C$,
denoted $\O_C(P_1 + P_2 + P_3)$, gives a point in $\bR \cap \bP$.
We denote by $(\bR \cap \bP)_0$ the set of such sheaves.
The intersection $\bR \cap \bP$ is irreducible and is the closure of $(\bR\cap \bP)_0$.
\end{proposition}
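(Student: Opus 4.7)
My plan is to proceed in three steps: (i) identify $\O_C(P_1+P_2+P_3)$ with the pushforward of the structure sheaf of $\PP^1$ along the normalization of $C$; (ii) use Proposition~\ref{prop:dawei} to deduce membership in both $\bR$ and $\bP$; and (iii) show that every sheaf in $\bR\cap\bP$ arises this way from a point of the (classically irreducible) Severi variety of rational planar quartics, with three-nodal curves forming a dense open stratum.

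For step (i), the key observation is that an irreducible planar quartic with three nodes has arithmetic genus $p_a(C)=3$ and geometric genus $0$, so $\tilde C\isom\PP^1$ and the normalization map $f\colon\PP^1\to C\hookrightarrow\PP^3$ is a birational finite map of degree $4$. The cokernel of the canonical inclusion $\O_C\hookrightarrow f_*\O_{\PP^1}$ has length one at each node, yielding the non-split extension
\[
\ses{\O_C}{f_*\O_{\PP^1}}{\CC_{P_1}\oplus\CC_{P_2}\oplus\CC_{P_3}},
\]
which I identify with $\O_C(P_1+P_2+P_3)$. Stability is standard: $f_*\O_{\PP^1}$ is pure, and every proper subsheaf of full $1$-dimensional support has slope strictly less than $1/4$. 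Step (ii) is then immediate: Proposition~\ref{prop:dawei}(2) places $f_*\O_{\PP^1}$ in $\bR$, and planarity is clear from $\mathrm{supp}\,(f_*\O_{\PP^1})=C\subset H$.

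For the irreducibility of $(\bR\cap\bP)_0$, I would parametrize it as follows. The variety of pairs $(H,C)$ with $H\subset\PP^3$ a plane and $C\subset H$ an irreducible three-nodal planar quartic is an irreducible locally closed subvariety of the relative linear system of quartics over $\check\PP^3$, of dimension $3+11=14$. The assignment $C\mapsto[\O_C(P_1+P_2+P_3)]$ is a well-defined injective morphism whose image is $(\bR\cap\bP)_0$, which is therefore irreducible of dimension $14$.

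The main obstacle is to prove the density $\bR\cap\bP=\overline{(\bR\cap\bP)_0}$. Here I plan to invoke Proposition~\ref{prop:dawei} once more: a general sheaf in $\bR$ has the form $g_*\O_{\PP^1}$ for some birational finite map $g\colon\PP^1\to\PP^3$ of degree $4$, and when such a sheaf lies in $\bP$, its support $C_0=g(\PP^1)$ must be an irreducible planar rational quartic. (Non-birational degree-$4$ maps factor through a conic $Q$ and produce a destabilizing subsheaf $\O_Q$ of slope $1/2>1/4$; reducible planar supports would belong to a different Hilbert scheme component by Proposition~\ref{prop:hilb41}.) Since the classical Severi variety $V_{4,0}$ of irreducible rational planar quartics is irreducible of dimension $11$, the family of such pairs $(H,C_0)$ is irreducible of dimension $14$. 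Its dense open stratum of three-nodal curves maps onto $(\bR\cap\bP)_0$, while the other strata (cuspidal, tacnodal, etc.) map into the closure. This yields the irreducibility of $\bR\cap\bP$ and the density of $(\bR\cap\bP)_0$.
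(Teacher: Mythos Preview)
Steps (i), (ii), and your irreducibility argument for $(\bR\cap\bP)_0$ are fine and essentially match the paper's approach.

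The density step, however, has a genuine gap. You argue that a sheaf of the form $g_*\O_{\PP^1}$ with $g$ birational and $g(\PP^1)$ planar lies in $\overline{(\bR\cap\bP)_0}$, via the Severi variety. But you have not shown that \emph{every} sheaf $F\in\bR\cap\bP$ is of this form, or even a limit of such. A priori $F$ is only a flat limit $\lim_{t\to 0}\O_{C_t}$ of structure sheaves of smooth non-planar rational quartics $C_t$; none of the $\O_{C_t}$ is planar, so you cannot pass to the Severi stratification directly, and $F$ itself need not be $g_*\O_{\PP^1}$ for any finite map $g$. Knowing that $\{g_*\O_{\PP^1}\}$ is dense in $\bR$ does not imply that its intersection with $\bP$ is dense in $\bR\cap\bP$.

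The paper supplies the missing idea: a projection trick. Given $F\in\bR\cap\bP$ supported in a plane $H$, choose a center $O\notin H$ (and not on any $C_t$) and let $\pi\colon\PP^3\setminus\{O\}\to H$ be linear projection. Then $\pi_*F=F$ since $\operatorname{supp}(F)\subset H$, while each $\pi_*\O_{C_t}$ is the pushforward along the normalization of a planar rational quartic and hence (for generic $O$) lies in $(\bR\cap\bP)_0$. Thus $F=\lim_{t\to 0}\pi_*\O_{C_t}\in\overline{(\bR\cap\bP)_0}$. This projection is what converts the non-planar approximating family into a planar one while fixing the limit, and it is precisely the step your argument lacks.
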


\begin{proof}

Let $v \colon \PP^1 \to C$ be the normalization map.
Since the degree $\deg(v)=4$, hence $v$ is an element of the moduli space $F_{0}(\PP^3,4)$ of finite maps of degree $4$.
By Proposition \ref{prop:dawei}, $F_{0}(\PP^3,4)$ is an irreducible variety and the locus $\mathrm{Map}^{sm}(\PP^1,\PP^3)/PGL(2)$ of finite maps whose image is smooth is a Zariski dense open subset.
One can choose a one-parameter family of finite maps $i_t \colon C_t \to \PP^3$
such that the general fibers are smooth rational quartic curves and $\lim_{t \to 0}\O_{C_t} = v_*\O_{\PP^1}$.
But by part (2) of Proposition \ref{prop:dawei}, we know that the direct image sheaf $v_*\O_{\PP^1}$ is stable. This is exactly the unique stable extension given by the normalization sequence:
\[
\ses{\O_C}{v_*O_{\PP^1}}{\CC_{P_1}\oplus \CC_{P_2}\oplus \CC_{P_3}}.
\]
We deduce that $\O_C(P_1 + P_2 + P_3)$ lies in $\bR \cap \bP$.

Assume that $\F$ gives a point in $\bR \cap \bP$, where $\operatorname{supp}(\F) \subset H$.
We have $F = \lim_{t \to 0}\O_{C_t}$ for smooth rational quartic curves $C_t$, $t \neq 0$.
Let us choose a projection center $O\in \PP^3$ that does not lie on $H$ or on any of the curves $C_t$.
Let $\pi \colon \PP^3 \setminus \{ O \} \to H$ be the projection with center $O$.
Then $\pi_*\O_{C_t}$ are stable sheaves for $t\neq 0$ by the argument of the previous paragraph.
Also, $\pi_*\F=\F$ because $\operatorname{supp}(\F) \subset H$.
Thus, $\lim_{t \to 0}(\pi_*\O_{C_t})=\pi_*(\lim_{t \to 0}\O_{C_t})=\F$.
Since $\pi_*\O_{C_t} \in (\bR \cap \bP)_0$,
$\F$ is an element of the closure of $(\bR \cap \bP)_0$.


Obviously, the open part $(\bR \cap \bP)_0$ of the intersection has a fibration structure over the Grassmannian $Gr(3,4)=(\PP^3)^*$.
The fibres are birational with the irreducible variety $F_{0}(\PP^2,4)$ of finite maps.
Thus, $(\bR \cap \bP)_0$ is irreducible.
\end{proof}

\begin{proposition}
For any $F\in \bR$, we have $h^0(F)=1$.
\end{proposition}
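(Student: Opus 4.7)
The plan is to leverage the identifications $\bR \isom \bR^{+} \isom \bR^{\infty}$ established earlier via wall-crossing. Under these isomorphisms every $F \in \bR$ is isomorphic to the structure sheaf $\O_C$ of a connected Cohen--Macaulay curve $C \subset \PP^3$ with Hilbert polynomial $4m+1$, so the problem reduces to showing that every such $C \in \bR^{\infty}$ satisfies $h^0(\O_C) = 1$.

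Next, I would apply the long exact cohomology sequence to
$$ 0 \to \I_C \to \O_{\PP^3} \to \O_C \to 0. $$
Since $\I_C$ is the ideal sheaf of a positive-dimensional subscheme, $H^0(\I_C) = 0$, and since $H^1(\O_{\PP^3}) = 0$, the long exact sequence collapses to $h^0(\O_C) = 1 + h^1(\I_C)$. Thus the proposition is equivalent to the vanishing $h^1(\I_C) = 0$, i.e., to $C$ being arithmetically Cohen--Macaulay. A parallel sanity check is that on the non-planar locus the bound $h^0(F)\le 2$ from Lemma \ref{lem:h0} forces $h^0(F)=1$ directly (combined with $\chi(F)=1$), so the substantive content of the proposition concerns the planar degenerations in $\bR\cap\bP$ together with the remaining non-smooth non-planar limits.

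For a smooth rational quartic this aCM property is classical: the ideal is generated by three quadrics with linear resolution $0 \to 2\O_{\PP^3}(-3) \to 3\O_{\PP^3}(-2) \to \I_C \to 0$, whence $h^1(\I_C) = 0$. The main obstacle is extending this to every $C$ in the closure $\bR^{\infty}$, since $h^1(\I_C)$ is only upper-semicontinuous and could a priori jump at specializations, so one cannot appeal to semicontinuity from the generic locus alone. I would resolve this either by appealing to \cite{NS03}, which identifies $\bR^{\infty}$ with the Hilbert scheme component of aCM rational quartic curves, or, staying self-contained within the paper, by using the classification of free resolutions of sheaves in $\bR$ carried out in Section \ref{sec:res}: for each resolution type, a direct cohomology computation on $\PP^3$, using $H^k(\O_{\PP^3}(n)) = 0$ for $0 < k < 3$, yields $h^0(F) = 1$ case by case. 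Combining either route with the reduction above completes the proof.
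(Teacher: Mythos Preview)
Your reduction in the first paragraph is where the argument breaks down. It is \emph{not} true that every $F\in\bR$ is isomorphic to the structure sheaf $\O_C$ of a connected CM curve with Hilbert polynomial $4m+1$. Such a curve $C$ is automatically non-planar (a planar quartic has $\chi(\O_C)=-2$, not $1$), yet by Proposition~\ref{thm:RcapP} the component $\bR$ contains planar sheaves: the generic element of $\bR\cap\bP$ is $\O_C(P_1+P_2+P_3)$ for a trinodal planar quartic $C$, which is not the structure sheaf of any subscheme of $\PP^3$. The identifications $\bR\isom\bR^{+}\isom\bR^{\infty}$ you invoke are birational statements away from the wall-crossing and planar loci; taking them literally over all of $\bR$ is exactly what fails. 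Consequently the aCM argument via $h^1(\I_C)=0$ and the appeal to \cite{NS03} simply do not apply to the sheaves in $\bR\cap\bP$.

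Your fallback through \S\ref{sec:res} is circular. The resolution classification there is organized \emph{by} the invariants $(\h^0(F\otimes\Omega^2(2)),\h^0(F\otimes\Omega^1(1)),\h^0(F))$, and type~(iii) is precisely the stratum with $\h^0(F)=2$. Showing that no $F\in\bR$ lands in type~(iii) is equivalent to the proposition itself; a ``direct cohomology computation'' from the resolution cannot decide this without first knowing which type occurs. The paper's proof avoids this by a tangent-space estimate: for every $F\in\bR\cap\bP$ one has $\ext^1(F,F)\ge 22$ (computed at the generic trinodal point via the sequence~\eqref{thomas2} and then propagated by semicontinuity along the irreducible $\bR\cap\bP$), whereas any planar sheaf with $\h^0(F)=2$, being of the form $\O_C(-p)(1)$, satisfies $\hom(F(-1),F)\le 4$ and hence $\ext^1(F,F)\le 21$. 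These bounds are incompatible, so $\h^0(F)=1$ on $\bR\cap\bP$; the non-planar case is immediate from Lemma~\ref{lem:h0}, as you noted.
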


\begin{proof}
For any plane $H \subset \PP^3$ and any sheaf $\F$ giving a point in $\M_H (4m+1)$ the exact sequence (\ref{thomas2}) reads
\[
0 \to \Ext^1_{\O_H}(\F, \F) \isom \CC^{17} \to \Ext^1_{\O_{\PP^3}} (\F, \F) \to \Hom(\F(-1), \F) \to \Ext^2_{\O_H}(\F, \F) \isom \Hom(\F, \F(-3))^* = 0.
\]
Assume now that $\F$ gives a generic point in $\bR \cap \bP$ of the form $\O_C(P_1 + P_2 + P_3)$,
where $C$ is an irreducible planar quartic curve having distinct nodes at $P_1$, $P_2$, $P_3$.
Then $\Hom(F(-1),F) \isom \CC^5$. This shows that $\ext^1(\F, \F) \ge 22$. Since the embedding dimension is upper semicontinuous and $\bR \cap \bP$
is irreducible, this estimate holds for all sheaves in $\bR \cap \bP$.

Suppose now that $h^0(F)=2$.
Then we have $F \isom \O_C(-p)(1)$ for some planar quartic curve $C$ and a point $p \in C$ (\cite[Proposition 3.3.4]{drezet_maican}). Let $H$ be the plane containing $C$.
From the exact sequence
\[
0 \to \O_C(-p) \to \O_C \to \CC_p \to 0
\]
we get the exact sequence
\[
0 = \Hom (\CC_p, F) \to \Hom(\O_C, F) \isom \CC^2 \to \Hom(\O_C(-p), F) \to
\Ext^1_{\O_H}(\CC_p, F) \isom \Ext^1_{\O_H}(F, \CC_p)^*.
\]
From the resolution
\[
0 \to \O_H(-3) \oplus \O_H(-1) \to 2\O_H \to F \to 0,
\]
we see that $\Ext^1_{\O_H}(F, \CC_p)^*$  is isomorphic to $\CC$ or $\CC^2$, depending on whether $p$ is a regular or a singular point of $C$.

Thus, $\hom(F(-1), F) \le 4$. In view of the exact sequence at the beginning of the proof, we deduce that $\ext^1(F, F) \le 21$.
In view of the above estimate, it follows that $F \notin \bR \cap \bP$.
\end{proof}

\begin{remark}
More generally, let $R_d(\PP^r)$ be the space of irreducible rational curves in $\PP^r$ of degree $d$.
By using elementary modifications of sheaves (\cite[Theorem 2.B.1]{huybrechts_lehn}),
one can easily see that every stable sheaf $F\in \overline{R_d(\PP^r)}\subset \M_{\PP^r}(dm+1)$ satisfies $h^0(F)=1$.
\end{remark}

\noindent
Similar arguments as in Proposition \ref{thm:RcapP} yield the following.

\begin{proposition}\label{thm:EcapP}
Let $H \subset \PP^3$ be a plane and let $C \subset H$ be an irreducible quartic curve
having two distinct nodal singular points $P_1$ and $P_2$ and no other singularities. Let $P$ be a regular point of $C$.
Then $\O_C(P_1 + P_2 + P)$, gives a point in $\bE \cap \bP$.
We denote by $(\bE \cap \bP)_0$ the set of such sheaves.
The intersection $\bE \cap \bP$ is irreducible and is the closure of $(\bE \cap \bP)_0$.
\end{proposition}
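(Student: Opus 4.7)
The plan is to follow the outline of Proposition \ref{thm:RcapP}. Set $F := \O_C(P_1+P_2+P)$. First, stability of $F$ is automatic: $F$ is pure (it fits into a non-split extension of zero-dimensional sheaves by $\O_C$, and $C$ is integral), and since $C$ is irreducible, every proper one-dimensional subsheaf $F' \subsetneq F$ has zero-dimensional quotient, hence $\chi(F') < \chi(F) = 1$, so stability holds. Planarity, $F \in \bP$, is immediate from $\operatorname{supp}(F) = C \subset H$. What requires work is the inclusion $F \in \bE$.

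The plan for $F \in \bE$ is to realize $F$ as a limit of sheaves $\O_{C'_t}(p_t)$ with $C'_t \subset \PP^3$ smooth elliptic quartics, by degenerating a family of automorphisms of $\PP^3$ to a linear projection. Choose coordinates so that $H = \{x_0 = 0\}$ and pick a smooth elliptic quartic $\tilde C \subset \PP^3$ together with a point $\tilde P \in \tilde C$, such that the linear projection $\pi \colon \PP^3 \setminus \{[1:0:0:0]\} \to H$ restricts to a finite birational morphism $\tilde C \to C$ sending $\tilde P$ to $P$ and collapsing two pairs of points of $\tilde C$ to the nodes $P_1, P_2$; any sufficiently general position of $\tilde C$ will do. For $t \ne 0$ let $g_t \in \Aut(\PP^3)$ be the automorphism $[x_0:x_1:x_2:x_3] \mapsto [tx_0:x_1:x_2:x_3]$, and set $C'_t := g_t(\tilde C)$, $p_t := g_t(\tilde P)$. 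Since $\tilde C$ avoids $[1:0:0:0]$, the map $\Phi \colon \tilde C \times \Delta \to \PP^3 \times \Delta$, $(s,t) \mapsto (g_t(s), t)$, is a well-defined finite morphism (using $\pi$ at $t=0$). The pushforward $\mathcal F := \Phi_*\bigl(\O_{\tilde C \times \Delta}(\tilde P \times \Delta)\bigr)$ has fibers of constant Hilbert polynomial $4m+1$, hence is $\Delta$-flat. Its fiber at $t \ne 0$ is $\O_{C'_t}(p_t) \in \bE$, and the fiber at $t=0$ is identified with $\O_C(P_1+P_2+P) = F$ via the partial-normalization sequence $0 \to \O_C \to \pi_*\O_{\tilde C} \to \CC_{P_1} \oplus \CC_{P_2} \to 0$ together with the pushforward of the twist by $\tilde P$. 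Therefore $F \in \bE$.

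For the reverse inclusion, let $F \in \bE \cap \bP$ with $\operatorname{supp}(F) \subset H$, and write $F = \lim_{t \to 0} \O_{C'_t}(p_t)$ with $C'_t$ smooth elliptic quartics. Choose a projection center $O \in \PP^3 \setminus H$ in sufficiently general position so that $O \notin C'_t$ for $t$ near $0$ and $\pi(C'_t)$ is an irreducible planar quartic with exactly two ordinary nodes for generic $t$. Then for generic $t$, $\pi_*\O_{C'_t}(p_t) \in (\bE \cap \bP)_0$, while $\pi_* F = F$ because $\operatorname{supp}(F) \subset H$. Passing to the limit gives $F \in \overline{(\bE \cap \bP)_0}$. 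For the irreducibility of $(\bE \cap \bP)_0$, observe that it fibers over $(\PP^3)^* = \operatorname{Gr}(3,4)$ by sending a sheaf to its plane of support, with fiber over $H$ equal to the incidence locus of triples $(C, \{P_1,P_2\}, P)$ where $C \subset H$ is an irreducible planar quartic with exactly two nodes and $P \in C_{\reg}$; this fiber is an open subset of a $\PP^1$-bundle (varying $P$) over the Severi variety of irreducible plane quartics with exactly two nodes, which is classically known to be irreducible.

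The main obstacle is Step 1, namely verifying that the degenerating family of automorphisms $g_t$ actually produces a flat family of sheaves on $\PP^3$ with the claimed special fiber, and carrying out the local identification of the pushforward under the partial normalization. The remaining steps closely parallel the rational case treated in Proposition \ref{thm:RcapP}.
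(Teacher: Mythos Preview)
Your proof is correct and follows the same template the paper indicates (``similar arguments as in Proposition~\ref{thm:RcapP}''). One point worth flagging: in the rational case the forward inclusion $(\bR\cap\bP)_0\subset\bR$ is obtained by invoking the irreducibility of the finite-map space $F_0(\PP^3,4)$ from Proposition~\ref{prop:dawei} to produce a degenerating family, and no genus-one analogue of that space is set up in the paper. Your explicit degeneration via the one-parameter subgroup $g_t$ is the natural substitute; it works because the complete linear system of the degree-$4$ pull-back bundle $v^*\O_H(1)$ on the genus-one normalization of $C$ is very ample (as $4\ge 2g+1$), so the required smooth elliptic-quartic lift $\tilde C\subset\PP^3$ projecting to $C$ always exists, and finite pushforward along $\Phi$ preserves $\Delta$-flatness and commutes with base change. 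The identification $\pi_*\O_{\tilde C}(\tilde P)\cong\O_C(P_1+P_2+P)$ then follows from the normalization sequence plus torsion-freeness. The reverse inclusion and the irreducibility argument (via the Severi variety of $2$-nodal plane quartics, in place of $F_0(\PP^2,4)$) are the direct parallels of the rational case.
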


\begin{remark}
Since $\bR\cap \bP\subset \bE \cap \bP$, it follows that $\bR \cap \bP \subset \bR \cap \bE$.
\end{remark}

\subsection{The intersection $\bR \cap \bE$}
The intersection $\bR \cap \bE$ is difficult to describe.
We provide below a list of general sheaves in $\bR \cap \bE$.
By Proposition \ref{prop:dawei}, all sheaves in $\bR$ are flat limits of sheaves of the form $f_*\O_{\tilde C}$ for finite maps $f$.
When $f$ is not birational, the flat limit can be described by the elementary modification technique, which we will repeatedly use in this section.
For the general setting of modification of sheaves, see \cite[Theorem 2.B.1]{huybrechts_lehn}.
For the version concerning maps, see \cite{CK11, CHK12}.

Our first order of business is to show that $W^+$ is disjoint from $\bR \cap \bE$.
Recall that $W^+$ is the locus of non-planar sheaves $F$ which fit into an exact sequence
\[\ses{\O_{C_0}}{F}{\O_L},\]
for a planar cubic curve ${C_0}$ and an incident line $L$ that is not coplanar with $C_0$.

\begin{proposition}
\label{prop:wsmooth}
Assume that $F$ gives a point in $W^+$.
Then $\ext^1(F, F) = 17$, so $F$ is a smooth point of $\bE$.
\end{proposition}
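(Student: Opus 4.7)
Since $\dim \bE = 17$ by Theorem~\ref{mainthm} and $\Ext^1(F,F)$ is the Zariski tangent space to $\M(4m+1)$ at $[F]$, we automatically have $\ext^1(F,F) \ge 17$, so it is enough to prove the reverse inequality, and smoothness in $\bE$ then follows for free.

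The first step in the plan is to reduce the computation to $\ext^2(F,F)$ via an Euler characteristic argument. For any pure one-dimensional sheaf $G$ on $\PP^3$ with Hilbert polynomial $dm+\chi$, the Chern character is $\operatorname{ch}(G) = dH^2 + (\chi-2d)H^3$, hence $\operatorname{ch}(G)^\vee \cdot \operatorname{ch}(G)$ lives in codegree $\ge 4$ and integrates to zero on $\PP^3$; Hirzebruch--Riemann--Roch then gives $\chi(G,G)=0$. Stability of $F$ (established in Lemma~\ref{building}) yields $\hom(F,F)=1$. Moreover, any nonzero morphism $F \to F(-4)$ has image whose reduced Hilbert polynomial must simultaneously exceed that of $F$ (by stability of $F$, unless the map is injective) and lie below that of $F(-4)$ (by stability of $F(-4)$, unless the map is surjective), both of which are impossible since $F \not\cong F(-4)$; hence $\Hom(F,F(-4))=0$, and Serre duality forces $\ext^3(F,F)=0$. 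We conclude $\ext^1(F,F) = 1 + \ext^2(F,F)$, and it remains to show $\ext^2(F,F) = 16$.

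To compute $\ext^2(F,F)$, I would apply $\Hom(F,-)$ to the defining sequence $\ses{\O_{C_0}}{F}{\O_L}$ and, in parallel, apply $\Hom(-,\O_{C_0})$ and $\Hom(-,\O_L)$ to the same sequence; this expresses $\Ext^\bullet(F,F)$ in terms of the six groups $\Ext^i(\O_{C_0},\O_{C_0})$, $\Ext^i(\O_L,\O_L)$, $\Ext^i(\O_L,\O_{C_0})$, and $\Ext^i(\O_{C_0},\O_L)$ for small $i$. The first two families are computed from the Koszul resolution of $\O_L$ and the planar resolution of $\O_{C_0}$ (which is locally free on the plane $H=\langle C_0\rangle$). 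For the cross-terms I would apply \eqref{thomas1} with $Y=H$ or $Y=L$: because $L \not\subset H$ and $L \cap C_0 = \{p\}$, every relevant Tor sheaf is concentrated at $p$ and is easy to compute locally.

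The main obstacle is keeping track of the connecting homomorphisms in these long exact sequences, since they depend on the extension class of $F$; the precise count $\ext^2(F,F) = 16$ should emerge exactly when one identifies the appropriate boundary map with multiplication by that class, so that the nontriviality of the class produces the necessary cancellations. If the bookkeeping becomes unwieldy, I would fall back on the local-to-global spectral sequence $\H^p(\cExt^q(F,F)) \Rightarrow \Ext^{p+q}(F,F)$: since $\cExt^q(F,F)$ vanishes for $q \ge 3$ and is supported on the one-dimensional curve $C_0 \cup L$, only the terms with $p\le 1$ and $q \le 2$ survive, reducing the calculation to $\H^0$ and $\H^1$ of three concrete sheaves on $C_0 \cup L$.
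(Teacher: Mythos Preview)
Your reduction is correct: the lower bound $\ext^1(F,F)\ge 17$ from $\dim\bE$, the identity $\chi(F,F)=0$ via Hirzebruch--Riemann--Roch, and the vanishing of $\ext^3$ by Serre duality and stability all hold, so the problem does reduce to $\ext^2(F,F)\le 16$. But this detour through $\ext^2$ is unnecessary, and the difficulty you anticipate with connecting maps is illusory. Once the lower bound is in hand, you only need an \emph{upper} bound on $\ext^1$ (equivalently on $\ext^2$); any short exact sequence $\ses{A}{F}{B}$ gives $\ext^1(F,F)\le \ext^1(F,A)+\ext^1(F,B)$ regardless of what the boundary maps do. Your worry about tracking the extension class, and the fallback to the local-to-global spectral sequence, are both avoidable.

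The paper exploits precisely this. Rather than the sequence \eqref{eq4} you propose, it uses the companion sequence \eqref{eq5}, $\ses{\O_L(-1)}{F}{\O_{C_0}(p)}$, and bounds $\ext^1(F,F)\le \ext^1(F,\O_L(-1))+\ext^1(F,\O_{C_0}(p))$. Each summand is then computed via \eqref{thomas1} with $Y=L$ and $Y=H$ respectively: the crucial simplification is that $F|_H\cong\O_{C_0}(p)$ (already observed before \eqref{eq5}) and $F|_L$ is $\O_L$ or $\O_L\oplus\CC_p$ according as $p\in\reg(C_0)$ or $p\in\sing(C_0)$, so the intrinsic Ext groups on $Y$ are immediate, while the Tor terms come from tensoring \eqref{eq4} with $\O_L$ or $\O_H$. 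One obtains $\ext^1(F,\O_L(-1))=3$ and $\ext^1(F,\O_{C_0}(p))=14$ in both cases, whence $\ext^1(F,F)\le 17$.

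Your plan via \eqref{eq4} and the six cross-Ext groups $\Ext^i(\O_{C_0},\O_L)$, $\Ext^i(\O_L,\O_{C_0})$, etc.\ would ultimately succeed, but it multiplies the bookkeeping and does not take advantage of the known restrictions $F|_H$ and $F|_L$. The paper's choice of \eqref{eq5} over \eqref{eq4} is what makes the computation short.
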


\begin{proof}
From \eqref{eq5}, we have the exact sequence
\[
\Ext^1 (\F, \O_L(-1)) \to \Ext^1(\F, \F) \to \Ext^1(\F, \O_{C_0}(p)).
\]
Denote $\{p\} = L \cap C_0$. From (\ref{thomas1}) we have the exact sequence
\begin{multline*}
0 \to \Ext^1_{\O_L}(\F_{| L}, \O_L(-1)) \to \Ext^1_{\O_{\PP^3}}(\F, \O_L(-1)) \to \Hom(\Tor_1^{\O_{\PP^3}}(\F, \O_L), \O_L(-1)) \to \\
\Ext^2_{\O_L}(\F_{| L}, \O_L(-1)) = 0.
\end{multline*}
From (\ref{eq4}) and (\ref{eq5}) we have the exact sequences
\[
\CC_p = {\O_{C_0}}_{| L} \to \F_{| L} \to \O_L \to 0 \quad \text{and} \quad
0 \to \O_L(-1) \to \F_{| L} \to \O_{C_0}(p)_{| L} \to 0.
\]
Note that $\O_{C_0}(p)_{| L}$ is isomorphic to $\CC_p$ if $p \in \reg(C_0)$, respectively,
to $\CC_p \oplus \CC_p$ if $p \in \sing(C_0)$.
It follows that $\F_{| L}$ is isomorphic to $\O_L$ if $p \in \reg(C_0)$, respectively, to $\CC_p \oplus \O_L$ if $p \in \sing(C_0)$.
From (\ref{eq4}) we get the long exact sequence of torsion sheaves of $\O_{\PP^3}$-modules
\[
\CC_p \isom \Tor_1(\O_{C_0}, \O_L) \to \Tor_1(\F, \O_L) \to \Tor_1(\O_L, \O_L) \isom 2\O_L(-1) \to \CC_p \to \F_{| L} \to \O_L \to 0.
\]
From this we see that
\[
\Tor_1(\F, \O_L)/\T^0(\Tor_1(\F, \O_L)) \isom
\begin{cases}
\O_L(-2) \oplus \O_L(-1) & \text{if $p \in \reg(C)$}, \\
2\O_L(-1) & \text{if $p \in \sing(C)$}.
\end{cases}
\]
In the first case we have the exact sequence
\[
0 = \Ext^1_{\O_L}(\F_{| L}, \O_L(-1)) \to \Ext^1_{\O_{\PP^3}}(\F, \O_L(-1)) \to \Hom(\O_L(-2) \oplus \O_L(-1), \O_L(-1)) \isom \CC^3 \to 0.
\]
In the second case we have the exact sequence
\[
0 \to \Ext^1_{\O_L}(\F_{| L}, \O_L(-1)) \isom \CC \to \Ext^1_{\O_{\PP^3}}(\F, \O_L(-1)) \to \Hom(2\O_L(-1), \O_L(-1)) \isom \CC^2 \to 0.
\]
In both cases we get the isomorphism $\Ext^1_{\O_{\PP^3}}(\F, \O_L(-1)) \isom \CC^3$.

Let $H$ be the plane containing $C_0$. From (\ref{thomas1}) and taking into account that $\F_{| H} \isom \O_{C_0}(p)$ we get the exact sequence
\begin{multline*}
0 \to \Ext^1_{\O_H} (\F_{| H}, \O_{C_0}(p)) \isom \CC^{10} \to \Ext^1_{\O_{\PP^3}}(\F, \O_{C_0}(p)) \to \Hom(\Tor_1^{\O_{\PP^3}}(\F, \O_H), \O_{C_0}(p)) \\
\to \Ext^2_{\O_H} (\F_{| H}, \O_{C_0}(P)) = 0.
\end{multline*}
The long exact sequence of torsion sheaves associated to (\ref{eq4}) reads in part
\[
0 = \Tor_2 (\O_L, \O_H) \to \Tor_1 (\O_{C_0}, \O_H) \isom \O_{C_0}(-1) \to \Tor_1(\F, \O_H) \to \Tor_1(\O_L, \O_H) = 0.
\]
We obtain the exact sequence
\[
0 \to \CC^{10} \to \Ext^1_{\O_{\PP^3}}(\F, \O_{C_0}(p)) \to \Hom(\O_{C_0}(-1), \O_{C_0}(p)) \to 0.
\]
Thus, $\Ext^1_{\O_{\PP^3}}(\F, \O_{C_0}(p)) \isom \CC^{14}$.
From the exact sequence at the beginning of the proof we get the inequality $\ext^1(\F, \F) \le 17$.
This must be an equality because the moduli space has dimension $17$ at $F$ (\cite[Corollary 4.5.2]{huybrechts_lehn}).
\end{proof}

\noindent
In view of Proposition \ref{prop:dawei}, whenever the reduced support of a sheaf $F \in \overline{\bR}$ is of degree 4,
$F \isom f_*\O_{\tilde C}$ for some birational finite map $f \colon \tilde{C}\to \PP^3$.
The list of such sheaves is as follows.

\begin{proposition}
\label{prop:reddeg4}
Let $F \in \bR \cap \bE$ have reduced support of degree 4. Then one of following holds:
\begin{enumerate}
\item $F$ is the unique non-split extension of $\CC_p$ by $\O_C$, where either
\begin{enumerate}
\item $C$ has an ideal $\langle q_1, q_2\rangle$ generated by two quadratic polynomials and $p$ is a singular point of $C$; or
\item $C$ is the union of a planar cubic $C_0$ that is singular at $p$ and a line $L$ meeting $C_0$ at a regular point.
\end{enumerate}
\item When $C$ is the union of a planar cubic $C_0$ that has a node at $p$ and a line $L$ passing through $p$,
only two sheaves in the extension $\PP(\Ext^1(\CC_p,\O_C)) \isom \PP^1$ belong to $\bR \cap \bE$.
\item When $C$ is the union of a planar cubic $C_0$ that has a cusp at $p$ and a line $L$ passing though $p$,
precisely one sheaf in the extension $\PP(\Ext^1(\CC_p,\O_C)) \isom \PP^1$ belongs to $\bR \cap \bE$.
\end{enumerate}
The sheaves from (3) lie in the closure of the set of sheaves from (2).
\end{proposition}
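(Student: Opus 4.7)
The plan is to apply Proposition \ref{prop:dawei}: since the reduced support of $F$ has degree $4 = \deg F$, any finite map realizing $F$ as $f_*\O_{\tilde C}$ must be birational onto its image, and moreover $\Psi$ is injective on this locus. Combining the assumption on the reduced support with the classification of $\bE^\infty$ in Proposition \ref{prop:einf}, the support of $F$ falls into one of three types: an irreducible singular quartic with ideal $\langle q_1, q_2 \rangle$; $C_0 \cup L$ with $C_0$ a planar cubic singular at $p$ and $L$ incident to $C_0$ at a regular point; or $C_0 \cup L$ with $C_0$ singular at $p$ and $L$ passing through $p$.

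For case (1)(a), I would take the normalization $\nu\colon \PP^1 \to C$, which is a birational finite map of degree $4$. Comparing Hilbert polynomials and using that the $\delta$-invariant of $C$ at $p$ equals one yields
\[
0 \to \O_C \to \nu_* \O_{\PP^1} \to \CC_p \to 0.
\]
Since $\dim \Ext^1(\CC_p, \O_C) = 1$ by Lemma \ref{jumpext}(1), $\nu_* \O_{\PP^1}$ is the unique non-split extension of $\CC_p$ by $\O_C$. Case (1)(b) is analogous: the stable source is $\tilde C_1 \cup \tilde C_2$, where $\tilde C_1 \cong \PP^1$ is the normalization of $C_0$ and $\tilde C_2 \cong \PP^1$ maps isomorphically onto $L$, joined by a node at the unique preimage $q' \in \tilde C_1$ of $q = C_0 \cap L$. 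A local analysis shows that $f$ is an isomorphism at $q$, so the cokernel of $\O_C \to f_*\O_{\tilde C}$ is the length-one sheaf $\CC_p$ coming from the normalization of the singularity of $C_0$; Lemma \ref{jumpext}(1) again gives uniqueness.

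For cases (2) and (3) we have $p \in \sing(C_0) \cap L$ and $L \nsubseteq \langle C_0 \rangle$, so $\dim \Ext^1(\CC_p, \O_C) = 2$ by Lemma \ref{jumpext}(2). Any birational stable map $f\colon \tilde C \to C$ of degree $4$ from a connected genus-zero curve must have two components $\tilde C_1, \tilde C_2$ with $\tilde C_1$ normalizing $C_0$ and $\tilde C_2$ mapping isomorphically to $L$, and the node of $\tilde C$ is attached at a preimage of $p$ in $\tilde C_1$. The number of such preimages equals the number of analytic branches of $C_0$ at $p$: two for a node, one for a cusp. By the injectivity of $\Psi$, distinct choices of attachment produce distinct sheaves, which accounts for exactly two sheaves in case (2) and one in case (3). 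To confirm that the resulting sheaves are the claimed points of $\PP(\Ext^1(\CC_p, \O_C))$, I would apply the connecting homomorphism $\delta$ from the sequence used in Proposition \ref{prop:E2} to show that neither direct image sheaf is of the form $\O_L(-1) \oplus \O_{C_0}(p)$, and then use that the two nodal direct images have different local structures at $p$.

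Finally, for the closure assertion I would take a one-parameter deformation $C_{0,t}$ of planar cubics with a node for $t \neq 0$ specializing to a cusp at $t = 0$, together with a line $L_t$ through the singularity and a family of stable maps whose general members attach to one of the two branches. As $t \to 0$ the two preimages of the node collide into the single preimage of the cusp, and both families of direct image sheaves converge to the unique cuspidal sheaf. The main technical obstacle is the completeness argument in (2) and (3): one must rule out other birational finite maps with image $C$, such as configurations where $\tilde C$ carries extra rational components attached at other points of $\tilde C_1$ or $\tilde C_2$. Matching degree and arithmetic genus for a connected tree of $\PP^1$'s mapping birationally onto $C_0 \cup L$, together with properness of $\overline{M}_0(\PP^3,4)$, should force the enumerated maps to exhaust all possibilities.
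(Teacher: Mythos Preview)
Your approach is essentially the same as the paper's: both arguments rest on the fact (stated just before Proposition \ref{prop:reddeg4}) that a sheaf in $\bR$ whose reduced support has degree $4$ must be $f_*\O_{\tilde C}$ for a birational finite map $f$, so that the possible $F$ are enumerated by partial normalizations of $C$ at $p$. The paper's proof is terser---it simply invokes Proposition \ref{prop:einf} and Lemma \ref{jumpext}, and takes $\tilde C$ to be the partial normalization at $p$ (one such in cases (1) and (3), two in case (2))---whereas you spell out the stable-map sources and the branch count explicitly, and you also sketch the closure assertion via a one-parameter degeneration of nodes to a cusp, which the paper's proof does not address at all.

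One minor correction: in case (1)(a) the complete intersection $C$ need not be irreducible (it can be a union of lines or of two conics, etc.), so writing the normalization as $\nu\colon \PP^1 \to C$ is too restrictive. The correct object, as the paper puts it, is the \emph{partial normalization of $C$ at $p$}, which is a connected nodal genus-zero curve (a tree of $\PP^1$'s) whenever $C$ is reducible; your argument goes through unchanged with this adjustment. Your flagged ``technical obstacle''---ruling out other birational finite maps onto $C$---is handled in the paper only implicitly, via the same appeal to Proposition \ref{prop:dawei} that you make.
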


\begin{proof}
Let $C$ be the reduced support of $F$. From Proposition \ref{prop:hilb},
we see that either $I_C=\langle q_1, q_2\rangle $ or $I_C = \langle xy, xz, yq_1 + zq_2\rangle$ after a change of coordinates,
where $q_1$ and $q_2$ are quadratic polynomials.
In order to lie in $\bR \cap \bE$, $F$ must be of the form $f_*\O_{\tilde C}$, with $p$ a singular point of $C$.
By the classification of sheaves in $\bE^{\infty}$ found at Proposition \ref{prop:einf},
we see that case (1) above occurs precisely when $\Ext^1(\CC_p, \O_C) \isom \CC$.
Indeed, we may take ${\tilde C}$ to be the partial normalization of $C$ at $p$.
The same argument applies in case (3).
In case (2) we may take two different partial normalizations of $C$ at $p$, each yielding a sheaf in $\bR \cap \bE$.
\end{proof}

\noindent
Note that, in case (2), by Proposition \ref{prop:E2}(1) there is a non-stable sheaf in the extension $\Ext^1(\CC_p, \O_C)$.
By the properness of the moduli space, the flat limit of sheaves in this extension must lie in $W^+$, and hence outside of $\bR$.
We will see below that in the case of Proposition \ref{prop:E2}(2) the flat limit of sheaves in the said extension is still in $\bR$.
This does not contradict Proposition \ref{prop:wsmooth} because in this case the flat limit is a planar sheaf and hence not in $W^+$.

\begin{corollary}
\label{non-reduced_degenerations}
In each of the following cases the sheaf $F$ gives a point in $\bR \cap \bE$:
\begin{enumerate}
\item $\F = \O_C(p)$, where $C$ is a singular curve of type $(2, 2)$ on a smooth quadric surface and $p \in \sing(C)$;
\item $\F = \O_C(p)$, where $C$ is a quadruple line supported on $L$
(the intersection of a double plane containing $L$ with the union of two distinct planes each containing $L$) and $p \in L$;
\item $\F = \O_C(p)$, where $C$ is the union of a singular planar curve $C_0$ with an incident line $L$, and $p \in \sing(C_0) \setminus L$.
\end{enumerate}
\end{corollary}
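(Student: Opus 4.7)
My proof proposal for Corollary \ref{non-reduced_degenerations}:

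By construction, each sheaf $\F = \O_C(p)$ in the three cases automatically lies in $\bE$: since $C$ is a connected Cohen--Macaulay quartic curve of arithmetic genus $1$, $\F$ sits in the unique nontrivial extension $0 \to \O_C \to \F \to \CC_p \to 0$. Hence the content of the corollary is that $\F$ additionally lies in $\bR$, i.e.\ that $\F$ is a flat limit of structure sheaves of smooth rational quartic curves. My plan is to exhibit, in each case, a one-parameter flat family of stable sheaves $\{\F_t\}$ over a smooth germ $(T,0)$ with general fiber $\F_t \isom \O_{C_t}$ for a smooth rational quartic $C_t$ and with special fiber $\F_0 \isom \F$. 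By Proposition~\ref{prop:dawei} and the semicontinuity of $\bR$, this suffices.

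The mechanism for producing such a family is the elementary modification of a family of direct images of finite maps, following \cite[Theorem 2.B.1]{huybrechts_lehn} and \cite{CK11, CHK12}. First, I would construct a flat family of finite maps $f_t\colon \widetilde{C}_t \to \PP^3$ with $\widetilde{C}_t$ a nodal or smooth rational curve, such that $f_t$ is birational onto a smooth rational quartic for $t \neq 0$ and the image of $f_0$ is contained in $C$. The direct images $(f_t)_* \O_{\widetilde{C}_t}$ then form a flat family of stable sheaves with general fiber in $\bR$. Second, I would perform an elementary modification along the divisor $\{t=0\} \subset T \times \PP^3$ to replace the old central fiber $(f_0)_*\O_{\widetilde{C}_0}$ by a sheaf isomorphic to $\F = \O_C(p)$, the new fiber being an extension in $\Ext^1(\CC_p, \O_C)$ whose class is prescribed by the normal direction of the family in the space $F_0(\PP^3,4)$ of finite maps.

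Case by case: for (1), I would take $\widetilde{C}_t$ to be a family of nodal rational curves whose image is a smooth rational quartic for $t \neq 0$ and specializes to the singular $(2,2)$-curve $C$ on the quadric at $t=0$, with $f_0$ identifying the appropriate branches over $p \in \sing(C)$. For (2), where $C$ is a non-reduced quadruple line supported on $L$, the idea is to use a family of embeddings $\PP^1 \hookrightarrow \PP^3$ whose image collapses onto $L$ with prescribed second-order tangential data, the schematic limit of the images recovering the quadruple line structure (cut out as described) and the marked point $p \in L$ coming from the limit of the $(f_t)$-images at the node of $f_0$. For (3), one degenerates a family of smooth rational quartics into a birational map onto $C_0 \cup L$ where $C_0$ acquires the singularity at $p$, and since $p \notin L$ we are in case (1)(b) of Proposition~\ref{prop:reddeg4} for the reduced support, which already yields $\O_C(p) \in \bR$.

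The hard part will be to check that the elementary modification produces precisely the sheaf $\O_C(p)$ and not another class in $\PP(\Ext^1(\CC_p,\O_C))$, whose dimension may jump to $2$ by Lemma~\ref{jumpext}. This requires an $\Ext$-group computation analogous to the one in the proof of Proposition~\ref{prop:wsmooth}, identifying the image of the Kodaira--Spencer map of the family in $\Ext^1(\CC_p, \O_C)$ and checking it coincides with the distinguished extension defining $\O_C(p)$. A subtler point in case (2) is that the non-reduced structure of $C$ must be recorded faithfully by the family, which one verifies by a local computation in coordinates using the explicit ideal $\langle x^2, xy, xq_1 + yq_2\rangle$ from Proposition~\ref{prop:hilb}.
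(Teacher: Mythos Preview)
Your approach is substantially different from the paper's, and considerably harder than necessary. The paper's proof is a three-line degeneration argument \emph{within} $\bR \cap \bE$: since $\bR \cap \bE$ is closed, it suffices to exhibit each sheaf as a limit of sheaves already known to lie in $\bR \cap \bE$ by Proposition~\ref{prop:reddeg4}. For (1), one uses that the locus of singular $(2,2)$-curves on $\PP^1 \times \PP^1$ is irreducible (a Severi-variety fact), so every such curve---reduced or not---is a limit of reduced nodal $(2,2)$-curves, and the corresponding $\O_C(p)$ is a limit of sheaves from Proposition~\ref{prop:reddeg4}(1)(a). For (2), one first notes that double conics are covered by (1), and then degenerates the ambient smooth quadric $S$ to a pair of planes to obtain the quadruple line. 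Case (3) is likewise a limit of the reduced case in Proposition~\ref{prop:reddeg4}. No maps, no modifications, no Kodaira--Spencer computations.

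Your direct approach via families of finite maps and elementary modifications is the same machinery the paper deploys for the genuinely harder Proposition~\ref{prop:reddeg3}, so it is not misguided in principle. But as written it has a gap: in cases (1) and (2) when the support $C$ is non-reduced (double conic, quadruple line), there is no birational finite map from a rational curve onto $C$; your central map $f_0$ must instead be a multiple cover of $C_{\mathrm{red}}$, and then $(f_0)_*\O_{\widetilde{C}_0}$ is a higher-rank sheaf on $C_{\mathrm{red}}$, not obviously related to $\O_C(p)$ by a single modification. Identifying the modified limit with the specific non-split extension $\O_C(p)$ requires the full normal-bundle/Kodaira--Spencer analysis you allude to, which you have not carried out. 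The paper sidesteps all of this by degenerating the \emph{support} rather than reconstructing the map.
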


\begin{proof}
The sheaves from (1) are limits of sheaves from Proposition \ref{prop:reddeg4}(1)(a) because the set of singular curves of type $(2, 2)$
on $\PP^1 \times \PP^1$ is irreducible (\cite[Theorem 3.1]{Tyo07}). In particular, if $C$ is a double conic (meaning the intersection of a double plane with a smooth quadric $S$)
and $p \in C$, then $\O_C(p) \in \bR \cap \bE$. The sheaves from (2) are limits of such sheaves
(make $S$ converge to the union of two distinct planes).
Finally, the sheaves from (3) are limits of sheaves from Proposition \ref{prop:reddeg4}(1)(a).
\end{proof}

%
%


\noindent
We will next examine the case when the support of the sheaf is the union of a double line and a conic.
The picture becomes more complicated.
Case (1) of Proposition \ref{prop:reddeg3} below was already dealt with at Corollary \ref{non-reduced_degenerations}(1),
but we treat it also using modifications of sheaves in order to better illustrate the argument.

%

\begin{proposition}
\label{prop:reddeg3}
In each of the following cases, all stable sheaves $F \in \PP(\Ext^1(\CC_p, \O_C))$ belong to $\bR \cap \bE$:
\begin{enumerate}
\item $C=L^2 Q$ is a union of a double line on $L$ and a conic $Q$ with $L$ and $Q$ meeting at a point, and $p\in L$;
\item $C=L^2Q$ is a union of a genus $-2$ double line $L^2$ and a conic $Q$, and $p\in Q\cap L$.
\end{enumerate}
\end{proposition}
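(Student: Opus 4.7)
The plan is to exhibit each stable $F \in \PP(\Ext^1(\CC_p,\O_C))$ as the flat limit of a one-parameter family of sheaves already known to lie in $\bR\cap\bE$ from Proposition \ref{prop:reddeg4} or Corollary \ref{non-reduced_degenerations}. Since $\bR$ is closed in $\M(4m+1)$ and $F\in \bE$ by construction, this will place $F$ in $\bR\cap \bE$. Note that case (1) corresponds to the $\Ext^1(\CC_p,\O_C)\isom \CC$ stratum of Proposition \ref{prop:einf}(1)(c), so $\PP(\Ext^1)$ is a single point with a unique stable extension, while case (2) is the jumping stratum of Lemma \ref{jumpext} where $\Ext^1(\CC_p,\O_C)\isom \CC^2$, and Proposition \ref{prop:E2}(2) identifies the stable classes as a $\PP^1\setminus \{\text{pt}\}$.

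For case (1), I would smooth the double line $L^2$ into a pair of distinct lines $L_t\cup L_t'$ approaching $L$, and simultaneously move a point $p_t\in L_t$ to $p$. For $t\neq 0$, the reduced support $C_t = L_t\cup L_t'\cup Q$ has degree four, and the extension $\F_t=\O_{C_t}(p_t)$ falls under the classification of Proposition \ref{prop:reddeg4} (as the union of a planar cubic singular at $p$ with a residual line), hence lies in $\bR\cap \bE$. An elementary modification of \cite[Theorem 2.B.1]{huybrechts_lehn} then identifies the flat limit $\F_0$ with the unique stable extension on $C$.

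For case (2), the genus $-2$ double line $L^2$ is not smoothable to two distinct lines inside any single smooth quadric, so the argument must be rerouted through Corollary \ref{non-reduced_degenerations}. I would take a family of smooth quadric surfaces $S_t$ through $p$ degenerating to the union of two planes meeting along $L$, together with a family of type-$(2,2)$ curves $D_t\subset S_t$ through $p_t$ specializing to $L^2\cup Q$. For $t\neq 0$ the sheaves $\F_t=\O_{D_t}(p_t)$ lie in $\bR\cap \bE$ by Corollary \ref{non-reduced_degenerations}(1). Varying the tangent direction along which $p_t\to p$ then yields a one-parameter family of flat limits, and a tangent-space computation analogous to the one in Proposition \ref{prop:wsmooth} shows that this family surjects onto the stable $\PP^1\setminus\{\text{pt}\}$ of $\PP(\Ext^1(\CC_p,\O_C))$.

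The main obstacle is in case (2): identifying the flat limit $\F_0$ with the prescribed element of $\PP(\Ext^1(\CC_p,\O_C))$ will require both a $\Tor$-computation to verify flatness of the family over the degenerating ambient quadric (parallel to the sequence of $\Tor$-computations carried out just before this statement) and an explicit matching of extension cocycles against the basis of $\Ext^1(\CC_p,\O_C)\isom \CC^2$ provided by Lemma \ref{jumpext}.
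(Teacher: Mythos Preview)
Your approach is genuinely different from the paper's, and while the overall strategy of exhibiting each $F$ as a flat limit of sheaves already known to lie in $\bR\cap\bE$ is reasonable, there are real gaps in case~(2) and an imprecision in case~(1).

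The paper does not work with flat limits of curves or sheaves at all. Instead it works inside the space of finite maps $F_0(\PP^3,4)$ and uses the elementary-modification machinery of \cite{CK11,CHK12}. In case~(1), it starts from a non-birational finite map $f\colon L_1\cup L_2\to\PP^3$ (double cover onto $L$, bijection onto $Q$), whose direct image sits in $\ses{\O_{L\cup Q}}{f_*\O_C}{\O_L(-1)}$, and then computes the normal space of the relevant locus $\Delta\subset F_0(\PP^3,4)$ via deformation theory of maps (using the octahedron axiom and the factorization $f=g\circ h$). The key point is that the Kodaira-Spencer map identifies $N_{\Delta/F_0(\PP^3,4),f}$ with $\Ext^1(\O_{L\cup Q},\O_L(-1))\isom\CC^3$, so that \emph{every} extension class is reached by modification. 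Case~(2) is handled analogously inside a locus $\Delta_{1,3}$ of bidegree-$(1,3)$ maps, showing $N_{D/\Delta_{1,3},f}\xrightarrow{\sim}\Ext^1_{\PP^3}(\O_{L\cup Q}(p),\O_L(-1))\isom\CC^4$.

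Your case~(1) outline has a small but concrete problem: for generic $p\in L$ one has $p\notin Q$, so if you smooth $L^2\to L_t\cup L_t'$ keeping $Q$ fixed, the singular points of any ``planar cubic'' component of $C_t$ limit into $L\cap Q$, not to $p$. Thus $\F_t$ does not fall under Proposition~\ref{prop:reddeg4}(1)(b) as you claim. One can likely repair this by deforming $C$ to a complete-intersection curve instead and tracking a singular point, but that still requires checking the limit sheaf is the given extension.

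The serious gap is in case~(2). Here you must hit an entire $\PP^1\setminus\{\text{pt}\}$ of extension classes, and your mechanism for doing so --- ``varying the tangent direction along which $p_t\to p$'' --- is not well-defined: $p_t$ lies on the curve $D_t$, so its approach direction is constrained by the tangent of $D_t$, not freely chosen. Covering the $\PP^1$ means varying the \emph{family} $(S_t,D_t)$, and showing the resulting map from families to $\PP(\Ext^1(\CC_p,\O_C))$ is surjective is precisely a Kodaira--Spencer computation. Your reference to Proposition~\ref{prop:wsmooth} is not the right model: that proposition bounds $\ext^1(F,F)$ to prove smoothness, it does not compute a differential from a parameter space to extension classes. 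What you actually need is the computation the paper carries out, only transported from the finite-map side to the Hilbert-scheme side --- and there it is harder to set up, because $C$ is not a complete intersection and does not lie on any smooth quadric (the only quadrics in $I_C$ are in the span of $x^2,xy$).
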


\begin{proof}
%

(1) One can see that the unique extension sheaf $F\in\Ext^1(\CC_p, \O_C)=\CC$ fits into the following exact sequence
\begin{equation}\label{eqstable1}
\ses{\O_{L}(-1)}{F}{\O_{{L}\cup Q}}
\end{equation}
by using the isomorphism $\Ext^1(\CC_p, \O_{L}(-2)) \isom \Ext^1(\CC_p, \O_C)$
which is given by the exact sequence $\ses{\O_{L}(-2)}{\O_C}{\O_{L\cup Q}}$.
Also by direct computation, stable sheaves in \eqref{eqstable1} are parametrized by $\Ext^1(\O_{{L}\cup Q}, \O_{L}(-1))=\CC^3$.
We prove that the sheaves in \eqref{eqstable1} arise as elementary modifications of sheaves by using finite maps.
Let $f \colon C \to \PP^3$ be a map whose domain $C$ is the pair of two lines $L_1\cup L_2$
such that $f_{|L_1}$ is a degree two map onto $L$ and $f_{|L_2}$ is a bijection onto $Q$.
Then obviously $f$ can be regarded as an element in the moduli space $F_0(\PP^3,4)$.
Let $\Delta \subset F_0(\PP^3,4)$ be the locus of the finite maps whose image is the union of lines and smooth conics meeting at a point.
Then, the direct image sheaf $f_*\O_C$ fits into the exact sequence
\begin{equation}
\label{direct}\ses{\O_{{L}\cup Q}}{f_*\O_C}{\O_{L}(-1)}.
\end{equation}
We show by modifications of sheaves that all sheaves in \eqref{eqstable1} lie in $\bR$.
For example, see \cite{CK11}. To apply the modification technique, we need to choose a smooth chart of $F_0(\PP^3,4)$ at $f$.
Around $f$, by \cite[Theorem 0.1]{Parker}, the maps space $F_0(\PP^3,4)$ can be obtained as the $SL(2)$-quotient
of the moduli space $F_0(\PP^1\times \PP^3, (1,4))$ of finite maps in $\PP^1\times \PP^3$ of bidegree $(1,4)$ where $\Aut (\PP^1)=SL(2)$ canonically acts on $F_0(\PP^1\times \PP^3, (1,4))$.
Among the fiber $f$ along the GIT-quotient map, let us choose that the graph map $f'$ whose restriction on $L_1$ is of bidegree $(1,2)$ which doubly covers $\PP^1\times L \subset \PP^1\times \PP^3$, then $f'$ has only the trivial automorphism.
Hence, around $f$, the space $F_0(\PP^1\times \PP^3, (1,4))$ is a smooth chart which is compatible with $SL(2)$-action. This implies that the argument in \cite[Lemma 4.6]{CK11} about the construction of the Kodaria-Spencer map of the maps space can be naturally applied in our setting.
Now, let us compute the normal space by the same technique as in \cite{CK11}.
We write $f$ as a composition $C \xrightarrow{h} C' \xrightarrow{g} \PP^3$, where $C'$ is a pair of lines.
Here $h$ is a two-to-one covering from $L_1$ and a bijection from $L_2$.
Also, $g$ is an isomorphism of degree $3$. By the octahedron axiom for the derived category, we have the exact sequence
\[
0 \to \Def(h) \to \Def(f) \to \Ext^1(h^*[g^*\Omega_{\PP^3} \to \Omega_{C'}], \O_C) \to \Ob(h)\isom \Ext^1(\Omega_C,\O_C) \to 0.
\]
Since the higher direct images $R^{\bullet}h_*(-)$ vanish, we have an isomorphism
\[
\Ext^1(h^*[g^*\Omega_{\PP^3} \to \Omega_{C'}], \O_C) \isom \Ext^1([g^*\Omega_{\PP^3} \to \Omega_{C'}], h_*\O_C).
\]
But the later space is decomposed into
\begin{multline}
\label{def2}
0 \to \Ext^1([g^*\Omega_{\PP^3} \to \Omega_{C'}], \O_{C'}) \to \Ext^1([g^*\Omega_{\PP^3} \to \Omega_{C'}], h_*\O_C) \to \\
\Ext^1([g^*\Omega_{\PP^3} \to \Omega_{C'}],O_L(-1))
\end{multline}
by the exact sequence
\[
\ses{\O_{C'}}{h_*\O_C}{\O_L(-1)}.
\]
The first term in \eqref{def2} is the deformation space of $g$ in $\PP^3$.
By using the local isomorphism $F_0(\PP^3,3) \isom \Hilb_{\PP^3}(3m+1)$ around $g$ associating $g$ to $g_*\O_{C'}=\O_{L\cup Q}$ (\cite[Theorem 1.4 and Proposition 3.3 (1)]{CK11}),
one can easily see that there exists a natural surjective homomorphism
\[
\psi \colon \Ext^1([g^*\Omega_{\PP^3} \to \Omega_{C'}],\O_{C'})\isom \Hom(I_{L\cup Q}, \O_{L\cup Q}) \to \Ext^1(\Omega_{L\cup Q}, \O_{L\cup Q})\isom \CC,
\]
where the map comes from the short exact sequence
\[
\ses{I_{L\cup Q}/I_{L\cup Q}^2}{\Omega_{\PP^3}|_{L\cup Q}}{\Omega_{L\cup Q}}.
\]
The kernel of $\psi$ is the deformation space of the map $g$ while fixing the node $p$ of the curve $L\cup Q$. In conclusion, we have a commutative diagram:
\[
\xymatrix
{
& \operatorname{Ker}(\psi) \ar@{^{(}->}[r] \ar@{^{(}->}[d] & \Ext^1([g^*\Omega_{\PP^3} \to \Omega_{C'}], \O_{C'})\ar@{->>}[r] \ar@{^{(}->}[d] & \Ext^1(\Omega_{L\cup Q}, \O_{L\cup Q})\ar@{=}[d] \\
\Def(h) \ar@{^{(}->}[r] & \Def(f)\ar[r] & \Ext^1([g^*\Omega_{\PP^3} \to \Omega_{C'}],h_*\O_C)\ar@{->>}[r] \ar@{->>}[d] & \Ob(g) \\
&& \Ext^1([g^*\Omega_{\PP^3} \to \Omega_{C'}], O_L(-1)).
}
\]
Combining with the deformation space $\Def(h) \isom \CC^2$ (\cite[Lemma 4.10]{CK11}),
we see that the normal space of $\Delta$ in $F_0(\PP^3,4)$ at $f$ is isomorphic to
\[
N_{\Delta/F_0(\PP^3,4),f} \isom \Ext^1([g^*\Omega_{\PP^3} \to \Omega_{C'}],O_L(-1)).
\]
Since the curve $L\cup Q$ is a locally complete intersection,
the two-terms complex $[g^*\Omega_{\PP^3} \to \Omega_{C'}]$ is quasi-isomorphic to $g^*N_{L\cup Q/\PP^3}^*[1]$. Thus,
\[
N_{\Delta/F_0(\PP^3,4),f}\isom \Ext^1(N_{L\cup Q/\PP^3}^*[1],\O_{L}(-1))=\Hom (N_{L\cup Q/\PP^3}^*, \O_{L}(-1)).
\]
Since the choice of the normal vectors in the tangent space of maps space at $f$ makes the switch of the sub/quotient sheaf of the original sheaf in \eqref{direct} (cf. \cite{CHK12}), the Kodaira-Spencer map
$
T_f F_0(\PP^3,4)\to \Ext^1(f_*\O_C,f_*\O_C)
$
descents to
\[
N_{\Delta/F_0(\PP^3,4),f}=\Hom (N_{L\cup Q/\PP^3}^*, \O_{L_0}(-1))\isom \Ext^1(\O_{L\cup Q},\O_{L_0}(-1))
\]
which is the compatible with the coboundary map given by the structure sequence
\[
\ses{I_{L\cup Q}}{\O}{\O_{L\cup Q}}.
\]
Therefore, the normal directions to $\Delta$ correspond exactly to the sheaves in the extension \eqref{eqstable1}.
This shows that such sheaves are obtained by elementary modifications, and hence lie in $\bR$.

\medskip

\noindent
(2) Similar arguments as above can be applied. Again, we want to show that all sheaves in the extension
\begin{equation}\label{eqstable2}
\ses{\O_{L}(-1)}{F}{\O_{{L}\cup Q}(p)}
\end{equation}
lie in $\bR$. Recall that in our convention $\O_{{L}\cup Q}(p)$ is the unique non-split extension of $\CC_p$ by $\O_{L \cup Q}$.
First, let us fix the point $p\in \PP^3$.
Let $\Delta_{1,3}$ be the locus of stable maps  $f \colon L_1\cup L_3 \to \PP^3$ such that degrees of $f$ on $L_1$ and $L_3$ are 1 and 3, respectively,
and $f$ maps $L_1\cap L_3$ to $p$.
Let $D$ be the locus in the boundary of $\Delta_{1,3}$ consisting of the finite maps $f:L_1\cup (L_1'\cup L_2) \to \PP^3$
such that the image $f|_{L_1'\cup L_2}$ is the union of a line $L$ and a coplanar conic and the restriction $f|_{L_1}$ is a bijection with the line $L$.
Then the two spaces are smooth around $f$. 

Let us describe the restricted Kodaira-Spencer map
\[
\xi \colon N_{D/\Delta_{1,3}, f} \to \Ext_{\PP^3}^1(\O_{L\cup Q}(p),\O_L(-1)).
\]
The normal space of $D$ in $\Delta_{1,3}$ consists of the two-dimensional contribution from smoothing two nodes of $L\cup Q$ and
the two-dimensional contribution from moving $L$ away from the plane containing $Q$.
By definition, $p$ is one node. We let $q$ be the other node.
From the exact sequence
\[
0 \to \O_{Q}(-q) \to \O_{L\cup Q}(p) \to \O_L \to 0,
\]
we have
\[
0 \to \Ext_{\PP^3}^1(\O_L,\O_L(-1)) \to \Ext_{\PP^3}^1(\O_{L\cup Q}(p),\O_L(-1)) \to \Ext_{\PP^3}^1(\O_Q(-q),\O_L(-1)) \to 0.
\]
The first term
\[
\Ext_{\PP^3}^1(\O_L,\O_L(-1)) \isom \H^0(N_{L/\PP^3}(-1)) \isom \CC^2
\]
is the deformation of $L$ while fixing the point $p$.
The third  term
\[
\Ext_{\PP^3}^1(\O_Q(-q),\O_L(-1)) \isom \Ext_L^1(\CC_p \oplus \CC_q, \O_L(-1)) \isom \CC \oplus \CC
\]
is the deformation space of smoothing the two nodes of the plane cubic curve $L\cup Q$ in the moduli space $\Hilb_{\PP^3}(3m+1)$ while fixing the point $p$.
After a diagram chase, we check that $\xi$ is an isomorphism and thus the modified sheaf lies in $\bR$.
Thus, we see that all sheaves in \eqref{eqstable2} belong to $\bR$.
\end{proof}

\medskip

Finally, we show that $\bR\cap \bE$ is irreducible. In fact, $\bR\cap \bE$ is the closure of the locus of sheaves from Proposition \ref{prop:reddeg4}(1)(a). Since it is straightforward that all other loci of $\bR\cap \bE$ is in the closure, we will show the sheaves from Propostion \ref{prop:reddeg3}(2) do not form a new irreducible component. We denote this locus by $\bE_{2b}$ following the notation in the previous section. We remark that $\bE_{2b}$ is irreducible.

\begin{lemma}\label{lem:dime2b}
If $F \in \bE_{2b}$ is generic, then $\ext^1(\F, \F) \le 19$.
\end{lemma}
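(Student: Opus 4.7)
The plan is to bound $\ext^1(F,F)$ using the defining short exact sequence
\[
0 \to \O_L(-1) \to F \to \O_{L\cup Q}(p) \to 0
\]
from Proposition \ref{prop:reddeg3}(2). Applying the functor $\Hom(F, -)$ yields the long exact sequence
\[
\cdots \to \Ext^1(F, \O_L(-1)) \to \Ext^1(F,F) \to \Ext^1(F, \O_{L\cup Q}(p)) \to \cdots,
\]
so that $\ext^1(F,F) \le \ext^1(F, \O_L(-1)) + \ext^1(F, \O_{L\cup Q}(p))$; any correction coming from the boundary maps can only decrease the bound, so it suffices to estimate each of the two terms separately.

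For $\ext^1(F, \O_L(-1))$ I would invoke the base-change exact sequence \eqref{thomas1} with $Y = L$, namely
\[
0 \to \Ext^1_{\O_L}(F_{|L}, \O_L(-1)) \to \Ext^1_{\PP^3}(F, \O_L(-1)) \to \Hom_{\O_L}(\Tor_1^{\O_{\PP^3}}(F, \O_L), \O_L(-1)) \to 0,
\]
where the last term vanishes since $L \isom \PP^1$. The restriction $F_{|L}$ and the sheaf $\Tor_1^{\O_{\PP^3}}(F, \O_L)$ can be read off by tensoring the defining sequence above with $\O_L$, which reduces the task to the analogous computations for $\O_L(-1)$ and $\O_{L \cup Q}(p)$. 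The latter sheaf, in turn, decomposes via $0 \to \O_Q(-p') \to \O_{L\cup Q}(p) \to \O_L \to 0$, where $\{p'\} = (L \cap Q) \setminus \{p\}$; the resulting calculations are entirely analogous in flavor to those carried out in Proposition \ref{prop:wsmooth}.

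For $\ext^1(F, \O_{L\cup Q}(p))$ I would exploit the crucial fact that $L \subset \langle Q \rangle = H$, so the support of $\O_{L\cup Q}(p)$ lies in the plane $H$. Applying \eqref{thomas2} with $Y = H$ gives
\[
0 \to \Ext^1_H(F_{|H}, \O_{L\cup Q}(p)) \to \Ext^1_{\PP^3}(F, \O_{L\cup Q}(p)) \to \Hom_H(F \tensor I_H, \O_{L\cup Q}(p)) \to \Ext^2_H(F_{|H}, \O_{L\cup Q}(p)).
\]
Using the explicit ideal $I_C = \langle x^2, xy, xq_1 + yq_2 \rangle$ of Proposition \ref{prop:hilb}, one can identify the restriction $F_{|H}$ up to a controllable torsion contribution coming from the intersection of the embedded double line $L^2$ with $H$. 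The group $\Ext^1_H$ is then computable from a free resolution of $\O_{L\cup Q}(p)$ on the plane $H$, while $F \tensor I_H$ is supported on $L$, which bounds the $\Hom$ term; the $\Ext^2_H$ term can be controlled via Serre duality on $H$.

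The main obstacle lies in the precise bookkeeping of these torsion contributions, since the non-reduced structure of $L^2$ introduces embedded points that appear both in $F_{|H}$ and in the relevant Tor sheaves, and one must verify that for generic $F$ these pieces behave as expected rather than jumping in dimension. Once the calculations are carried out, summing the two contributions produces the desired bound $\ext^1(F,F) \le 19$, which is strict enough to conclude that $\bE_{2b}$ does not contribute a new irreducible component of $\bR \cap \bE$.
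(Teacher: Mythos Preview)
Your approach is genuinely different from the paper's, and it is worth highlighting the contrast. The paper does not attempt any structural computation at all: it observes that $\bE_{2b}$ is irreducible, so it suffices to compute $\ext^1(F,F)$ for a single explicit sheaf; it then writes down a concrete free resolution of type \eqref{resolution_1} for a specific $F\in\bE_{2b}$ (supported on the scheme with ideal $\langle x^2, xy, y^3\rangle$, with $p=\langle x,y,z\rangle$) and verifies $\ext^1(F,F)=19$ by a Macaulay2 computation. Your plan, by contrast, follows the template of Proposition~\ref{prop:wsmooth}: bound $\ext^1(F,F)$ via the long exact sequence associated to $0\to\O_L(-1)\to F\to\O_{L\cup Q}(p)\to 0$, and then control each summand using the base-change sequences \eqref{thomas1}--\eqref{thomas2}. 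This is more conceptual and would, if it goes through, give an honest hand computation rather than a computer check.

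That said, as written your proposal is a strategy rather than a proof: none of the Tor or restriction computations are actually carried out, and you explicitly flag the bookkeeping of the embedded structure along $L^2$ as an unresolved obstacle. The risk is real. In Proposition~\ref{prop:wsmooth} the analogous bound $\ext^1(F,\O_L(-1))+\ext^1(F,\O_{C_0}(p))=3+14=17$ was \emph{sharp}, with no slack from the connecting homomorphism; here the support is non-reduced and the line $L$ lies inside $\langle Q\rangle$, so both $F_{|L}$ and $\Tor_1^{\O_{\PP^3}}(F,\O_L)$ acquire extra torsion compared to the reduced case, and there is no a priori reason the two summands will add up to at most $19$ rather than $20$ or more. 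Until you actually compute $\ext^1(F,\O_L(-1))$ and $\ext^1(F,\O_{L\cup Q}(p))$ for a generic $F$, the argument is incomplete. The paper's Macaulay2 shortcut sidesteps exactly this difficulty.
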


\begin{proof}
Since $\bE_{2b}$ is irreducible, it is enough to prove the estimate for a single sheaf. Let $F$ be given by the resolution
\be
\label{resolution_1}
0 \to 3\O(-3) \xrightarrow{\psi} 5\O(-2) \xrightarrow{\f} \O(-1) \oplus \O \to \F \to 0
\ee
\[
\psi = \left[
\ba{ccc}
 -y &-z& 0  \\
 x & 0&  0 \\
0 & x&  0  \\
 0  &-y& x  \\
 0 & 0 & -y \\
\ea
\right], \ \
\f = \left[
\ba{ccccc}
x & y & z & 0&0 \\
0 & 0 &y^2 & xy & x^2
\ea
\right]
\]
This resolution is one of three possible resolution types we will encounter in Section \ref{sec:res}.
The support of $F$ is the subscheme defined by the ideal $\langle x^2,xy,y^3\rangle$ and the point $p$ is defined by $\langle x,y,z\rangle$.
Thus,  $F$ lies in $\bE_{2b}$. It is easy to check, with the help of the computer program Macaulay2 (\cite{M2}), that $\Ext^1(F,F) \cong \CC^{19}$.
\end{proof}

\begin{theorem}\label{theorem_18}
The variety $\bR\cap \bE$ is irreducible.
\end{theorem}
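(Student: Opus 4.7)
The plan is to prove $\bR \cap \bE = Z_0$, where $Z_0$ denotes the closure in $\M(4m+1)$ of the sheaves of Proposition \ref{prop:reddeg4}(1)(a)---the sheaves $\O_C(p)$ with $C = V(q_1, q_2)$ an irreducible quartic cut out by two quadrics and $p \in \sing(C)$. The variety $Z_0$ is irreducible as the image of an irreducible parameter space of such pairs, and $Z_0 \subseteq \bR \cap \bE$ is immediate; the content is the reverse inclusion.

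I would first dispose of the other loci in $\bR \cap \bE$ listed in Proposition \ref{prop:reddeg4}(1)(b), (2), (3), Corollary \ref{non-reduced_degenerations}, and Proposition \ref{prop:reddeg3}(1) by exhibiting explicit degenerations from $Z_0$. For example, a pair $(L \cup C_0, p)$ with $p$ a node of the planar cubic $C_0$ arises as the limit of pairs $(V(q_1(t), q_2(t)), p(t))$ when $q_1(t), q_2(t)$ acquire a common linear factor; lifting to the unique stable extensions $\O_{C_t}(p(t))$ yields a flat family in $Z_0$ converging to the given sheaf. Similar smoothings treat the remaining cases, with the non-reduced degenerations already provided by Corollary \ref{non-reduced_degenerations} and the elementary modification arguments in the proof of Proposition \ref{prop:reddeg3}(1).

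The main obstacle is showing $\bE_{2b} \subseteq Z_0$, since a priori $\bE_{2b}$ could furnish a new irreducible component. The key is a dimension count combining Lemma \ref{lem:dime2b} with intersection theory in a smooth ambient space. Fix a generic $F \in \bE_{2b}$. Since a formal neighborhood of $F$ in $\M(4m+1)$ embeds in its Zariski tangent space $\Ext^1(F, F) \isom \CC^{\ext^1(F, F)}$, and the closed subgerms $\bR$ and $\bE$ therein have local dimensions $16$ and $17$, the intersection inequality together with Lemma \ref{lem:dime2b} yields
\[
\dim_F(\bR \cap \bE) \ \ge \ 16 + 17 - \ext^1(F, F) \ \ge \ 33 - 19 \ = \ 14.
\]
On the other hand, a direct parameter count gives $\dim \bE_{2b} = 13$: choosing the plane $H$ ($3$), the line $L \subset H$ ($2$), the point $p \in L$ ($1$), the conic $Q \subset H$ through $p$ ($4$), the double-line parameter $q_1$ (a quadric in the coordinates transverse to $L$) satisfying $q_1(p) = 0$ ($2$), and the extension class in $\PP(\Ext^1(\CC_p, \O_C)) \setminus \{\mathrm{pt}\} \isom \PP^1 \setminus \{\mathrm{pt}\}$ ($1$), totaling $13$.

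Since $13 < 14$, the locus $\bE_{2b}$ alone cannot account for the local dimension of $\bR \cap \bE$ at $F$; hence $F$ must also lie in a further irreducible component of $\bR \cap \bE$, which by the classification of sheaves in the intersection can only be $Z_0$. Therefore $\bE_{2b} \subseteq Z_0$, and $\bR \cap \bE = Z_0$ is irreducible.
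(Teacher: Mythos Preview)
Your proposal is correct and follows essentially the same strategy as the paper: reduce to showing that $\bE_{2b}$ does not form a separate component of $\bR\cap\bE$, and rule this out via the tension between $\dim\bE_{2b}=13$, the bound $\ext^1(F,F)\le 19$ from Lemma~\ref{lem:dime2b}, and the intersection-dimension inequality $\dim_F(\bR\cap\bE)\ge 16+17-19=14$ coming from the embedding of a formal neighborhood into the Zariski tangent space. The paper phrases this same count as a tangent-space inequality yielding $\dim T_y\M\ge 20$ and hence a contradiction, whereas you phrase it positively to force $F\in Z_0$; the two formulations are equivalent.
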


\begin{proof}
Suppose the variety $\bE_{2b} \subset \bR\cap \bE$ forms a new irreducible component. Choose a regular point $y$ of $\bE_{2b}$. Then
\begin{equation}
\label{equation_31}
\dim T_y\M_{\PP^3}(4m+1)  \ge \dim T_y \bR  + \dim T_y \bE - \dim T_y \bE_{2b} \ge 16+17-13 =20.
\end{equation}
Since we may choose $y$ to be generic, this contradicts the estimate at Lemma \ref{lem:dime2b}.
\end{proof}

\section{The resolutions of the sheaves in $\M(4m+1)$}
\label{sec:res}

We fix a $4$-dimensional vector space $V$ over $\CC$ and a basis $\{ X, Y, Z, W \}$ of $V^*$.
We identify $\PP^3$ with $\PP(V)$.
Let $\F$ be a one-dimensional sheaf on $\PP^3$.
The relevant part of the $E^1$-level of the Beilinson spectral sequence (\cite[Theorem 3.1.4]{OCS80}) converging to $\F$ is displayed in the following tableau:
\begin{align*}
& \H^1(\F(-1)) \tensor \O(-3) \stackrel{\f_1}{\lra} \H^1(\F \tensor \Omega^2(2)) \tensor \O(-2) \stackrel{\f_2}{\lra}
\H^1(\F \tensor \Omega^1(1)) \tensor \O(-1) \stackrel{\f_3}{\lra} \H^1(\F) \tensor \O \\
& \H^0(\F(-1)) \tensor \O(-3) \stackrel{\f_4}{\lra} \H^0(\F \tensor \Omega^2(2)) \tensor \O(-2) \stackrel{\f_5}{\lra}
\H^0(\F \tensor \Omega^1(1)) \tensor \O(-1) \stackrel{\f_6}{\lra} \H^0(\F) \tensor \O
\end{align*}
The $E^2$-level has tableau
\[
\xymatrix
{
\Ker(\f_1) \ar[rrd]^-{\f_7} & \Ker(\f_2)/\Image(\f_1) \ar[rrd]^-{\f_8} & \Ker(\f_3)/\Image(\f_2) & \Coker(\f_3) \\
\Ker(\f_4) & \Ker(\f_5)/\Image(\f_4) & \Ker(\f_6)/\Image(\f_5) & \Coker(\f_6)
}
\]
The spectral sequence degenerates at $E^3$, where all maps are zero:
\[
\xymatrix
{
\Ker(\f_7) & \Ker(\f_8) & \Ker(\f_3)/\Image(\f_2) & \Coker(\f_3) \\
\Ker(\f_4) & \Ker(\f_5)/\Image(\f_4) & \Coker(\f_7) & \Coker(\f_8)
}
\]
Thus $\f_7$ is an isomorphism, $\f_3$ is surjective, $\f_4$ is injective, $\Ker(\f_5) = \Image(\f_4)$ and
we have the exact sequence
\[
0 \to \Ker(\f_2)/\Image(\f_1) \xrightarrow{\f_8} \Coker(\f_6) \to \F \to \Ker(\f_3)/\Image(\f_2) \to 0
\]
Denote $p = \h^0(\F \tensor \Omega^1(1))$, $q = \h^0(\F \tensor \Omega^2(2))$.
Assume that $\F$ is semistable and has Hilbert polynomial $P_{\F}(m) = 4m+1$.
According to \cite{choi_chung_estimates} we have the relations
\[
\h^0(\F(-1)) = 0, \qquad \h^1(\F) = 0 \text{  or  } 1.
\]
The Beilinson monad with middle cohomology $\F$ yields an exact sequence
\be
\label{beilinson}
0 \to 3\O(-3) \oplus q \O(-2) \xrightarrow{\psi} (q+5)\O(-2) \oplus p\O(-1) \xrightarrow{\f} \Ker(\f_3) \oplus \H^0(\F) \tensor \O \to \F \to 0
\ee
in which $\psi_{12} = 0$ and $\f_{12} = 0$.
We recall two well-known facts.
The sheaves giving points in $\M_{\PP^3}(m+1)$ are precisely the structure sheaves of lines.
The sheaves giving points in $\M_{\PP^3}(2m+1)$ are precisely the structure sheaves of conic curves.

\begin{theorem}
Let $\F$ give a point in $\M_{\PP^3}(4m+1)$.
Then precisely one of the following is true:
\begin{enumerate}
\item[(i)]
$\h^0(\F \tensor \Omega^2(2)) = 0$, $\h^0(\F \tensor \Omega^1(1)) = 0$, $\h^0(\F) = 1$;
\item[(ii)]
$\h^0(\F \tensor \Omega^2(2)) = 0$, $\h^0(\F \tensor \Omega^1(1)) = 1$, $\h^0(\F) = 1$;
\item[(iii)]
$\h^0(\F \tensor \Omega^2(2)) = 1$, $\h^0(\F \tensor \Omega^1(1)) = 3$, $\h^0(\F) = 2$.
\end{enumerate}

\medskip

\noindent
The sheaves satisfying conditions (i) are precisely the sheaves having a resolution of the form
\be
\label{resolution_1}
0 \to 3\O(-3) \xrightarrow{\psi} 5\O(-2) \xrightarrow{\f} \O(-1) \oplus \O \to \F \to 0
\ee
\[
\f = \left[
\ba{ccccc}
l_1 & l_2 & l_3 & l_4 & l_5 \\
q_1 & q_2 & q_3 & q_4 & q_5
\ea
\right]
\]
where $\dim (\operatorname{span} \{ l_1, l_2, l_3, l_4, l_5 \}) \ge 3$.

\medskip

\noindent
The sheaves satisfying conditions (ii) are precisely the sheaves having a resolution of the form
\be
\label{resolution_2}
0 \to 3\O(-3) \xrightarrow{\psi} 5\O(-2) \oplus \O(-1) \xrightarrow{\f} 2\O(-1) \oplus \O \to \F \to 0
\ee
where $\f_{12} = 0$ and $\f_{11} \colon 5\O(-2) \to 2\O(-1)$ is not equivalent to a morphism
of the form
\[
\left[
\ba{ccccc}
\star & \star & 0 & 0 & 0 \\
\star & \star & \star & \star & \star
\ea
\right] \qquad \text{or} \qquad \left[
\ba{ccccc}
\star & \star & \star & \star & 0 \\
\star & \star & \star & \star & 0
\ea
\right]
\]

\medskip

\noindent
The sheaves satisfying conditions (iii) are precisely the sheaves having a resolution of the form
\be
\label{resolution_3}
0 \to \O(-4) \oplus \O(-2) \xrightarrow{\psi} \O(-3) \oplus 3\O(-1) \xrightarrow{\f} 2\O \to \F \to 0
\ee
\[
\psi = \left[
\ba{ll}
\phantom{-} l & \phantom{-} 0 \\
\phantom{-} 0 & \phantom{-} l \\
-f_1 & -l_1 \\
-f_2 & -l_2
\ea
\right], \qquad \f = \left[
\ba{cccc}
f_1 & l_1 & l & 0 \\
f_2 & l_2 & 0 & l
\ea
\right]
\]
where $l, l_1, l_2$ are linearly independent one-forms.
If $H \subset \PP^3$ is the plane given by the equation $l = 0$,
then $\F$ has resolution
\be
\label{resolution_4}
0 \to \O_H(-3) \oplus \O_H(-1) \xrightarrow{\bar{\f}} 2\O_H \to \F \to 0
\ee
\[
\bar{\f} = \left[
\ba{cc}
\bar{f}_1 & \bar{l}_1 \\
\bar{f}_2 & \bar{l}_2
\ea
\right]
\]
where $\bar{f}_1$, $\bar{f}_2$, $\bar{l}_1$, $\bar{l}_2$ denote classes modulo $l$.
\end{theorem}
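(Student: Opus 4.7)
The plan is to exploit the Beilinson spectral sequence set up just above the statement and, for each admissible triple $(\h^0(\F \tensor \Omega^2(2)), \h^0(\F \tensor \Omega^1(1)), \h^0(\F))$, extract a resolution of the claimed shape from the monad (\ref{beilinson}).

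First I would show that only the three triples in (i)--(iii) can occur. By Lemma \ref{lem:h0} we have $\h^0(\F) \in \{1,2\}$, and the preparatory remarks above give $\h^0(\F(-1)) = 0$ and $\h^1(\F) \in \{0,1\}$, with $\h^0(\F) = 1 + \h^1(\F)$ by Riemann--Roch. When $\h^0(\F) = 2$, Lemma \ref{lem:h0} forces $\F$ to be planar and supported on a quartic curve in some plane $H$; a direct calculation of $\h^0(\F \tensor \Omega^i(i))$ using the restriction of the Euler sequences to $H$ gives $(q,p,\h^0(\F)) = (1,3,2)$, which is case (iii). When $\h^0(\F) = 1$, Gieseker semistability excludes $q \ge 1$ (such a summand would produce a $0$-dimensional torsion term or a destabilizing planar quotient), and the remaining numerical constraints together with surjectivity of $\f_3$ force $p \in \{0,1\}$, giving cases (i) and (ii), respectively.

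Next, for each case, I would specialize the monad. In case (i) the monad is exactly $0 \to 3\O(-3) \to 5\O(-2) \to \O(-1) \oplus \O \to \F \to 0$, and the spanning condition on $\{l_1,\dots,l_5\}$ is equivalent, via a destabilization argument, to excluding structure sheaves of a line or a conic as subsheaves of $\F$. In case (ii) one extra $\O(-1)$ enters both the middle and right terms of (\ref{beilinson}); the two ruled-out shapes of the linear block $\f_{11}$ match exactly the configurations in which $\F$ would have a destabilizing quotient of Hilbert polynomial $m+1$ (the structure sheaf of a line) or $2m+1$ (the structure sheaf of a conic), using the well-known characterization of such sheaves. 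In case (iii) the presence of a single $\O(-2)$ in the domain of $\psi$ together with the shape of $\psi_{11}$ forces a common linear form $l$ in the first column of $\psi$; reducing modulo $l$ then produces the planar resolution (\ref{resolution_4}).

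The main obstacle will be the converse: verifying that every complex of the displayed shape has as cokernel a semistable sheaf with Hilbert polynomial $4m+1$. The Hilbert polynomial is immediate from the alternating sum of Euler characteristics of the free modules. For stability, the rank conditions on $\f$ in (i) and (ii) are designed so that no destabilizing subsheaf can exist: if one did, the linear block of $\f$ would have to factor through a subcomplex realizing exactly the excluded matrix shapes. In case (iii), the specific form of $\psi$ guarantees that $\F$ is supported inside $H = \{l = 0\}$, whence semistability reduces to the known $\PP^2$ classification of \cite{drezet_maican}.
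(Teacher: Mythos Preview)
Your proposal has a circularity problem. Lemma~\ref{lem:h0}, which you invoke both for the bound $\h^0(\F)\in\{1,2\}$ and, crucially, for the planarity of $\F$ when $\h^0(\F)=2$, is proved in the paper \emph{by appeal to the resolutions established in this very theorem}. You cannot cite it here. The bound on $\h^0(\F)$ is recoverable from the cohomology estimates of \cite{choi_chung_estimates} already quoted in the preamble, but planarity is not independently available at this point.

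This breaks your strategy for case~(iii): without planarity you cannot compute $\h^0(\F\tensor\Omega^i(i))$ by restricting to a plane. The paper's route is entirely within $\PP^3$ and is by far the longest part of the argument. One first rules out $p=4$ and $p=5$ by a case analysis of the possible shapes of $\f_{32}\colon p\O(-1)\to 2\O$, using stability to exclude each subcase; with $p=3$ one then shows the maximal minors of the linear block share a common linear factor, deduces $q=1$ via the isomorphism $\f_7$, resolves the $\Omega^1$ summand, and uses a duality argument on $\F^\dual\in\M(4m-1)$ to control $\rank(\psi_{12})$ before finally extracting the common form $l$. Planarity emerges as a \emph{consequence}, not an input; your one-sentence sketch (``the presence of a single $\O(-2)$\dots forces a common linear form $l$'') does not approximate this work. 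A smaller point on (i)--(ii): the bound $p\le 1$ when $\h^0(\F)=1$ comes from excluding a nonzero map $\O_L\to\F$ or $\O_C\to\F$ via stability, not from surjectivity of $\f_3$; and $q=0$ is argued separately for $p=0$ (injectivity of $\f_5$) and $p=1$ (the isomorphism $\f_7$), rather than by a single destabilization claim.
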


\begin{proof}
(i) Assume first that $\h^0(\F) = 1$. The exact sequence (\ref{beilinson}) becomes
\[
0 \to 3\O(-3) \oplus q\O(-2) \xrightarrow{\psi} (q+5) \O(-2) \oplus p\O(-1) \xrightarrow{\f} (p+1)\O(-1) \oplus \O \to \F \to 0
\]
We claim that $p = 0$ or $1$. Indeed, if $p = 2$, then we would get a commutative diagram
\[
\xymatrix
{
p \O(-1) \ar[r]^-{\f_{22}} \ar[d] & \O \ar[r] \ar[d] & \O_L \ar[r] \ar[d] & 0 \\
(q+5) \O(-2) \oplus p\O(-1) \ar[r]^-{\f} & (p+1)\O(-1) \oplus \O \ar[r] & \F \ar[r] & 0
}
\]
Both $\O_L$ and $\F$ are stable and $\p(\O_L) = 1 > \p(\F)$, hence $\Hom(\O_L, \F) = 0$.
Thus $\O \to \F$ is the zero morphism. On the other hand $\H^0(\O) \to \H^0(\F)$ is injective because $\H^0(\Coker(\psi)) = 0$.
We have obtained a contradiction. If $p = 3$ or $p \ge 4$, then $\Coker(\f_{22})$ would be the structure sheaf of a point,
respectively, it would be zero. Both cases would yield contradictions as above.

Assume that $p=0$. Then $q=0$ because $\f_5$ is injective. We obtain the resolution
\[
0 \to 3\O(-3) \to 5\O(-2) \xrightarrow{\f} \O(-1) \oplus \O \to \F \to 0
\]
\[
\f = \left[
\ba{ccccc}
l_1 & l_2 & l_3 & l_4 & l_5 \\
q_1 & q_2 & q_3 & q_4 & q_5
\ea
\right]
\]
If $\dim (\operatorname{span} \{ l_1, l_2, l_3, l_4, l_5 \}) = 1$, then we may assume that $l_1 \neq 0$ and that
$l_2$, $l_3$, $l_4$, $l_5$ are zero.
We would get a commutative diagram
\[
\xymatrix
{
5\O(-2) \ar[r]^-{\f} \ar[d] & \O(-1) \oplus \O \ar[r] \ar[d] & \F \ar[r] \ar[d] & 0 \\
\O(-2) \ar[r]^-{l_1} & \O(-1) \ar[r] & \O_H(-1) \ar[r] & 0
}
\]
showing that $\F$ maps surjectively to $\O_H(-1)$. This is absurd, because $\dim (\operatorname{supp}(\F)) = 1$.
Likewise, if $\dim (\operatorname{span} \{ l_1, l_2, l_3, l_4, l_5 \}) = 2$, then $\F$ would have a quotient sheaf
of the form $\O_L(-1)$, in violation of semi-stability.
We conclude that $\F$ has resolution (\ref{resolution_1}).

Conversely, we assume that $\F$ has resolution (\ref{resolution_1}) and we must show that $\F$ is semistable.
At every point $P \in \PP^3$ we have $\operatorname{hd}_{P}(\F) \le 2$, hence $\operatorname{depth}_P(\F) \ge 1$.
From Grothendieck's Criterion we deduce that $\mathcal{H}^0_{\{P\}} (\F) = 0$, that is, $\F$ has no sections
supported on $\{ P \}$. Thus $\F$ has no zero-dimensional torsion. Assume that $\F$ had a destabilizing subsheaf $\E$.
We may assume that $\E$ is semistable.
Since $\h^0(\E) \le \h^0(\F) =1$ the Hilbert polynomial of $\E$ may be one of the following: $m+1$, $2m+1$, $3m+1$.
In the first case $\E \isom \O_L$ and its standard resolution fits into a commutative diagram
\[
\xymatrix
{
0 \ar[r] & \O(-2) \ar[r] \ar[d]^-{\gamma} & 2\O(-1) \ar[r] \ar[d]^-{\beta} & \O \ar[r] \ar[d]^-{\alpha} & \E \ar[r] \ar[d] & 0 \\
0 \ar[r] & 3\O(-3) \ar[r] & 5\O(-2) \ar[r] & \O(-1) \oplus \O \ar[r] & \F \ar[r] & 0
}
\]
Since $\alpha \neq 0$, we have $\Ker(\alpha) = 0$, hence $\Ker(\gamma) \isom \Ker(\beta) = 2\O(-1)$.
This is absurd.
In the second case $\E$ is the structure sheaf of a conic curve and its standard resolution fits into a commutative diagram
\[
\xymatrix
{
0 \ar[r] & \O(-3) \ar[r] \ar[d]^-{\gamma} & \O(-2) \oplus \O(-1) \ar[r] \ar[d]^-{\beta} & \O \ar[r] \ar[d]^-{\alpha} & \E \ar[r] \ar[d] & 0 \\
0 \ar[r] & 3\O(-3) \ar[r] & 5\O(-2) \ar[r] & \O(-1) \oplus \O \ar[r] & \F \ar[r] & 0
}
\]
Since $\alpha \neq 0$, we have $\Ker(\gamma) \isom \Ker(\beta)$. It follows that $\O(-1)$ is a subsheaf of $\Ker(\gamma)$.
This is absurd. Finally, assume that $P_{\E}(m) = 3m+1$. The quotient $\G = \F / \E$ has no zero-dimensional torsion
and $P_{\G}(m) = m$. It follows that $\G \isom \O_L(-1)$. We have a commutative diagram
\[
\xymatrix
{
0 \ar[r] & 3\O(-3) \ar[r] \ar[d]^-{\gamma} & 5\O(-2) \ar[r]^-{\f} \ar[d]^-{\beta} & \O(-1) \oplus \O \ar[r] \ar[d]^-{\alpha} & \F \ar[r] \ar[d] & 0 \\
0 \ar[r] & \O(-3) \ar[r] & 2\O(-2) \ar[r] & \O(-1) \ar[r] & \G \ar[r] & 0
}
\]
From the commutativity of the middle square we see that
\[
\f \sim \left[
\ba{ccccc}
\star & \star & 0 & 0 & 0 \\
\star & \star & \star & \star & \star
\ea
\right]
\]
This contradicts our hypothesis. We conclude that there are no destabilising subsheaves $\E \subset \F$.

\medskip

\noindent
(ii) We next examine the case when $\h^0(\F \tensor \Omega^1(1)) = 1$ and $\h^0(\F) = 1$.
Since $\f_5$ is injective we see that $q = 0$ or $1$. If $q=1$, then $\f_6$ would be generically zero,
hence $\f_6 = 0$ , hence $\Ker(\f_6)/\Image(\f_5) \isom \O_H(-1)$.
Recall that $\f_7 \colon \Ker(\f_1) \to \Ker(\f_6)/\Image(\f_5)$ is an isomorphism.
It would follow that $\O_H(-1)$ is a subsheaf of $3\O(-3)$. This is absurd. Thus $q=0$ and we have a resolution
\[
0 \to 3\O(-3) \to 5\O(-2) \oplus \O(-1) \xrightarrow{\f} 2\O(-1) \oplus \O \to \F \to 0
\]
If $\f_{11}$ were equivalent to a morphism of the form
\[
\left[
\ba{ccccc}
\star & \star & 0 & 0 & 0 \\
\star & \star & \star & \star & \star
\ea
\right]
\]
then $\F$ would have a quotient sheaf of the form $\O_H(-1)$, or of the form $\O_L(-1)$.
This, we saw above, yields a contradiction.
If $\f$ were equivalent to a morphism of the form
\[
\left[
\ba{cccccc}
\star & \star & \star & \star & 0 & 0 \\
\star & \star & \star & \star & 0 & 0 \\
\star & \star & \star & \star & q & l
\ea
\right]
\]
then we would have a commutative diagram
\[
\xymatrix
{
\O(-2) \oplus \O(-1) \ar[r]^-{[ q \ l]} \ar[d] & \O \ar[r] \ar[d] & \O_C \ar[r] \ar[d] & 0 \\
5\O(-2) \oplus \O(-1) \ar[r]^-{\f} & 2\O(-1) \oplus \O \ar[r] & \F \ar[r] & 0
}
\]
in which $C$ is the conic curve given by the equations $q = 0$, $l = 0$.
Both $\O_C$ and $\F$ are stable with $\p(\O_C) = 1/2 > \p(\F)$, hence $\Hom(\O_C, \F) = 0$.
Thus, the map $\O \to \F$ is zero. This, as we saw above, yields a contradiction.
We conclude that $\F$ has resolution (\ref{resolution_2}).

Conversely, if $\F$ has resolution (\ref{resolution_2}), then, by arguments analogous to the arguments in the case
of resolution (\ref{resolution_1}), we can show that $\F$ is semistable.

\medskip

\noindent
(iii) Finally, we consider the case when $\h^0(\F) = 2$. Then $p \ge 3$ and resolution (\ref{beilinson}) takes the form
\[
0 \to 3\O(-3) \oplus q \O(-2) \xrightarrow{\psi} (q+5) \O(-2) \oplus p \O(-1) \xrightarrow{\f} \Omega^1 \oplus (p-3) \O(-1) \oplus 2\O \to \F \to 0
\]
The morphism $\f_{32} \colon p\O(-1) \to 2\O$ cannot be equivalent to a morphism represented by a matrix of the form
\[
\left[
\ba{cccll}
\star & \cdots & \star & 0 & 0 \\
\star & \cdots & \star & l_1 & l_2
\ea
\right]
\]
otherwise we would have a commutative diagram
\[
\xymatrix
{
2\O(-1) \ar[r]^-{[ l_1 \ l_2 ]} \ar[d] & \O \ar[r] \ar[d] & \O_L \ar[r] \ar[d] & 0 \\
(q+5) \O(-2) \oplus p\O(-1) \ar[r] & \Omega^1 \oplus (p-3)\O(-1) \oplus 2\O \ar[r] & \F \ar[r] & 0
}
\]
But $\Hom(\O_L, \F) = 0$, hence the morphism $2\O \to \F$ is not injective.
On the other hand $\H^0(2\O) \to \H^0(\F)$ is injective because $\H^0(\Coker(\psi)) = 0$.
This yields a contradiction. Thus $p \le 5$. Denote $\E = \Coker(\f_{32})$.
Assume first that $p = 5$. We may write
\[
\f_{32} = \left[
\ba{ccccc}
X & Y & Z & W & 0 \\
0 & l_1 & l_2 & l_3 & l_4
\ea
\right]
\]
If $X$ and $l_4$ are linearly independent, then $\E$ is supported on the line $L$ given by the equations
$X = 0$, $l_4 = 0$.
Thus
\[
\E / \T^0(\E) \isom \O_L(d_1) \oplus \cdots \oplus \O_L(d_n)
\]
Since $\F$ is stable, $\Hom(\O_L(d), \F) = 0$ if $d \ge 0$.
Thus $\H^0(\E) \to \H^0(\F)$ is the zero morphism, hence $\H^0(2\O) \to \H^0(\F)$ is also the zero morphism.
This is a contradiction.
We have reduced to the case when
\[
\f_{32} = \left[
\ba{clllc}
X & Y & Z & W & 0 \\
0 & l_1 & l_2 & l_3 & X
\ea
\right]
\]
where $l_1$, $l_2$, $l_3$ are linearly independent one-forms in the variables $Y$, $Z$, $W$.
Note that
\[
\left[
\ba{lll}
Y & Z & W \\
l_1 & l_2 & l_3
\ea
\right] \nsim \left[
\ba{ccc}
0 & \star & \star \\
\star & \star & \star
\ea
\right]
\]
hence the maximal minors of this matrix
\[
q_1 = \left|
\ba{ll}
Z & W \\
l_2 & l_3
\ea
\right|, \qquad q_2 = \left|
\ba{ll}
Y & W \\
l_1 & l_3
\ea
\right|, \qquad q_3 = \left|
\ba{ll}
Y & Z \\
l_1 & l_2
\ea
\right|
\]
are linearly independent and have no common factor.
It follows easily that there is an exact sequence
\[
0 \to \O(-4) \xrightarrow{\beta} \O(-3) \oplus 3\O(-2) \xrightarrow{\alpha} 5\O(-1) \xrightarrow{\f_{32}} 2\O \to \E \to 0
\]
where
\[
\alpha = \left[
\ba{llll}
\phantom{-} 0 & - Y & - Z & - W \\
\phantom{-} q_1 & \phantom{-} X & \phantom{-} 0 & \phantom{-} 0 \\
- q_2 & \phantom{-} 0 & \phantom{-} X & \phantom{-} 0 \\
\phantom{-} q_3 & \phantom{-} 0 & \phantom{-} 0 & \phantom{-} X \\
\phantom{-} 0 & - l_1 & - l_2 & -l_3
\ea
\right], \qquad \beta = \left[
\ba{cccc}
-X & q_1 & - q_2 & q_3
\ea
\right]
\]
From this we get $P_{\E} = 3$, hence $\Hom(\E, \F) = 0$, hence $2\O_L \to \F$ is the zero morphism.
This, as we saw above, yields a contradiction.

\medskip

\noindent
Assume now that $p = 4$. We examine first the case when
\[
\f_{32} \sim \left[
\ba{clll}
X & Y & Z & 0 \\
0 & l_1 & l_2 & l_3
\ea
\right]
\]
If $X$ and $l_3$ are linearly independent, then $\E/\T^0(\E)$ is supported on a line and we get
a contradiction as above. Thus, we may write
\[
\f_{32} = \left[
\ba{cllc}
X & Y & Z & 0 \\
0 & l_1 & l_2 & X
\ea
\right]
\]
where $l_1$ and $l_2$ are linear forms in the variables $Y$, $Z$, $W$.
It is easy to see that there is an exact sequence
\[
0 \to 2\O(-2) \xrightarrow{\alpha} 4\O(-1) \xrightarrow{\f_{32}} 2\O \to \E \to 0
\]
where
\[
\alpha = \left[
\ba{ll}
- Y & - Z \\
\phantom{-} X & \phantom{-} 0 \\
\phantom{-} 0 & \phantom{-} X \\
- l_1 & - l_2
\ea
\right]
\]
We have $P_{\E}(m) = 2m+2$ and $\E$ has no zero-dimensional torsion.
From the semi-stability of $\F$ we see that the morphism $\E \to \F$ is zero or it factors
through a subsheaf $\F' \subset \F$ with $P_{\F'}(m) = m - k$, $k \ge 0$.
Thus $\H^0(\F') = 0$, so, at any rate, $\H^0(\E) \to \H^0(\F)$ is the zero map.
It follows that the map $\H^0(2\O) \to \H^0(\F)$ is zero, which yields a contradiction.

Assume next that
\[
\f_{32} = \left[
\ba{cccc}
l_{11} & l_{12} & l_{13} & l_{14} \\
l_{21} & l_{22} & l_{23} & l_{24}
\ea
\right] \nsim \left[
\ba{cccc}
\star & \star & \star & 0 \\
0 & \star & \star & \star
\ea
\right]
\]
Then we may assume that
\[
\f' = \left[
\ba{ccc}
l_{11} & l_{12} & l_{13} \\
l_{21} & l_{22} & l_{23}
\ea
\right] \nsim \left[
\ba{ccc}
0 & \star & \star \\
\star & \star & \star
\ea
\right]
\]
Note that the maximal minors of $\f'$ are linearly independent and have no common factor.
According to \cite{freiermuth_trautmann} and \cite{maican_duality}, the sheaf $\E' = \Coker(\f')$
gives a point in $\M_{\PP^3}(3m+2)$. Note that $\E$ is a quotient sheaf of $\E'$.
Since $\Hom(\E', \F) = 0$, it follows that $\Hom(\E, \F) = 0$, hence $2\O \to \F$ is the zero morphism.
We have reached again a contradiction.

\medskip

\noindent
Thus far we have proved that $p=3$.
Resolution (\ref{beilinson}) takes the form
\[
0 \to 3\O(-3) \oplus q \O(-2) \xrightarrow{\psi} (q+5) \O(-2) \oplus 3 \O(-1) \xrightarrow{\f} \Omega^1 \oplus 2\O \to \F \to 0
\]
The morphism $\f_{22} \colon 3\O(-1) \to 2\O$ has linearly independent maximal minors.
We claim that these maximal minors have a common linear factor.
If this were not the case, then, as mentioned above, $\Coker(\f_{22})$ would give a point in $\M_{\PP^3}(3m+2)$
and we would reach the contradictory conclusion that $2\O \to \F$ is the zero morphism.
It is clear now that $\Ker(\f_{22}) \isom \O(-2)$.
The isomorphism $\f_7 \colon \Ker(\f_1) \to \Ker(\f_{22})/\Image(\f_5)$ shows that $q = 1$.
Resolving $\Omega^1$ in the above sequence gives the resolution
\[
0 \to \O(-4) \oplus 3\O(-3) \oplus \O(-2) \xrightarrow{\psi} 4\O(-3) \oplus 6\O(-2) \oplus 3\O(-1) \xrightarrow{\f}
6\O(-2) \oplus 2\O \to \F \to 0
\]
\[
\psi = \left[
\ba{ccc}
\psi_{11} & \psi_{12} & 0 \\
0 & \psi_{22} & 0 \\
0 & \psi_{32} & \psi_{33}
\ea
\right], \qquad \f = \left[
\ba{ccc}
\f_{11} & \f_{12} & 0 \\
0 & \f_{22} & \f_{23}
\ea
\right]
\]
\[
\psi_{11} = \left[
\ba{c} X \\ Y \\ Z \\ W \ea
\right], \qquad \psi_{33} = \left[
\ba{l} \phantom{-} l_1 \\ -l_2 \\ \phantom{-} l_3 \ea
\right], \qquad \f_{23} = \left[
\ba{ccc}
l_{11} & l_{12} & l_{13} \\
l_{21} & l_{22} & l_{23}
\ea
\right]
\]
where
\[
u l_1 = \left|
\ba{cc}
l_{12} & l_{13} \\
l_{22} & l_{23}
\ea
\right|, \qquad u l_2 = \left|
\ba{cc}
l_{11} & l_{13} \\
l_{21} & l_{23}
\ea
\right|, \qquad u l_3 = \left|
\ba{cc}
l_{11} & l_{12} \\
l_{21} & l_{22}
\ea
\right|
\]
for some $u \in V^*$.
We claim that $\rank(\psi_{12}) = 3$.
To see this we dualise the above exact sequence.
According to \cite[Lemma 3]{maican_duality}, we have the exact sequence
\[
0 \to 2\O(-4) \oplus 6\O(-2) \xrightarrow{\f^\trans} 3\O(-3) \oplus 6\O(-2) \oplus 4\O(-1) \xrightarrow{\psi^\trans}
\O(-2) \oplus 3\O(-1) \oplus \O \to \F^\dual \to 0
\]
According to \cite{maican_duality}, $\F^\dual$ gives a point in $\M_{\PP^3}(4m-1)$.
If $\psi_{12}$ were zero, then we would get a commutative diagram
\[
\xymatrix
{
4\O(-1) \ar[r]^-{\psi_{11}^\trans} \ar[d] & \O \ar[r] \ar[d] & 0 \ar[d] \\
3\O(-3) \oplus 6\O(-2) \oplus 4\O(-1) \ar[r]^-{\psi^\trans} & \O(-2) \oplus 3\O(-1) \oplus \O \ar[r] & \F^\dual \ar[r] & 0
}
\]
showing that the morphism $\O \to \F^\dual$ is zero.
But the map $\H^0(\O) \to \H^0(\F^\dual)$ is injective because $\H^0(\Coker(\f^\trans)) = 0$.
If $\rank(\psi_{12}) = 1$, then the map $\O \to \F^\dual$ would factor through the structure sheaf of a point,
so it would be zero.
If $\rank(\psi_{12}) = 2$, then the map $\O \to \F^\dual$ would factor through the structure sheaf $\O_L$ of a line,
so it would be zero, because $\Hom(\O_L, \F^\dual) = 0$. We reach again contradictions.
This proves the claim.

Canceling $3\O(-3)$ we get the resolution
\[
0 \to \O(-4) \oplus \O(-2) \to \O(-3) \oplus 6\O(-2) \oplus 3\O(-1) \xrightarrow{\f} 6\O(-2) \oplus 2\O \to \F \to 0
\]
We have $\rank(\f_{12}) = 6$, otherwise $\F$ would map surjectively to $\O_H(-2)$ for a plane $H \subset \PP^3$.
This is clearly impossible. Canceling $6\O(-2)$ we finally get the resolution
\[
0 \to \O(-4) \oplus \O(-2) \xrightarrow{\psi} \O(-3) \oplus 3\O(-1) \xrightarrow{\f} 2\O \to \F \to 0
\]
with
\[
\psi = \left[
\ba{cl}
l & \phantom{-} 0 \\
f_1 & \phantom{-} l_1 \\
f_2 & -l_2 \\
f_3 & \phantom{-} l_3
\ea
\right], \qquad \f = \left[
\ba{cccc}
g_1 & l_{11} & l_{12} & l_{13} \\
g_2 & l_{21} & l_{22} & l_{23}
\ea
\right]
\]
The sheaf $\E = \Coker(\f_{12} \colon 3\O(-1) \to 2\O)$ is supported on $H \cup \{ P \}$, where $H$ is the plane given
by the equation $u=0$ and $P$ is the point given by the ideal $\langle l_1, l_2, l_3 \rangle$.
Since $\F$ is a quotient sheaf of $\E$ and since $\F$ has no zero-dimensional torsion,
we see that $\operatorname{supp}(\F) \subset H$.
Applying the snake lemma to the commutative diagram in which the middle row is the dual of the above exact sequence
\[
\xymatrix
{
& & 0 \ar[d] & 0 \ar[d] \\
& & \O(-1) \ar[d] \ar[r]^-{l} & \O \ar[r] \ar[d] & \O_{H'} \ar[r] & 0 \\
0 \ar[r] & 2\O(-4) \ar[r]^-{\f^\trans} & 3\O(-3) \oplus \O(-1) \ar[r]^-{\psi^\trans} \ar[d] & \O(-2) \oplus \O \ar[r] \ar[d] & \F^\dual \ar[r] & 0 \\
& & 3 \O(-3) \ar[d] \ar[r]^-{[ l_1 \ -l_2 \ l_3 ]} & \O(-2) \ar[r] \ar[d] & \CC_P \ar[r] & 0 \\
& & 0 & 0
}
\]
we get the exact sequence
\[
\O_{H'} \to \F^\dual \to \CC_P \to 0
\]
As $\F^\dual$ has no zero-dimensional torsion, we see that $P \in H'$, that is, $l \in \operatorname{span} \{ l_1, l_2, l_3 \}$.
Moreover, $\operatorname{supp}(\F^\dual) \subset H'$, hence also $\operatorname{supp}(\F) \subset H'$.
It follows that $H = H'$, otherwise $\F$ would be supported on a line,
yet a vector bundle of rank greater than one on $\PP^1$ is not stable.
Thus, we may assume that $u = l$ and we may write
\[
\psi = \left[
\ba{cl}
l & \phantom{-} 0 \\
f_1 & \phantom{-} l \\
f_2 & -l_2 \\
f_3 & -l_3
\ea
\right], \qquad \f =\left[
\ba{cccc}
g_1 & l_2 & l & 0 \\
g_2 & l_3 & 0 & l
\ea
\right]
\]
From the relations
\[
g_1 l + l_2 f_1 + l f_2 = 0, \qquad
g_2 l + l_3 f_1 + l f_3 = 0
\]
we see that $f_1$ is divisible by $l$.
Performing column operations on $\psi$, we may assume that $f_1 = 0$.
Thus $g_1 = -f_2$, $g_2 = -f_3$. We have obtained resolution (\ref{resolution_3}).

Conversely, given resolution (\ref{resolution_3}), we also have resolution (\ref{resolution_4}),
hence, by \cite[Theorem 3.2.1]{drezet_maican}, $\F$ is semistable.
\end{proof}

\begin{remark}
The general sheaves in $\bR$ and $\bE$ have the same resolution of the form \eqref{resolution_1}.
The general sheaves in $\bP$ and the sheaves in the wall-crossing have resolution \eqref{resolution_2}.
The stable sheaves $F$ with $\h^0(F)=2$ have resolution \eqref{resolution_3}.
\end{remark}


\begin{thebibliography}{99}

\bibitem{vainsencher} D. Avritzer, I. Vainsencher.
{\em Compactifying the space of elliptic quartic curves}.
In G.Ellingsrud, C. Peskine, G. Sacchiero, and S. A. Str{\o}mme, editors. Complex Algebraic Geometry {\bf 179},
London Math. Soc. Lect. Note Series. Cambridge University Press, 47--58, 1992.

\bibitem{Che08} D. Chen.  {\em Mori's program for the Kontsevich moduli space $\overline{M}_{0,0}(\PP^3, 3)$}. Int. Math. Res. Not. (2008), article ID rnn067.


\bibitem{CCC11} D. Chen, I. Coskun, C. Crissman.
{\em Towards Mori's program for the moduli space of stable maps}.
Amer. J. Math. {\bf 133} (2011), no. 5, 1389--1419.

\bibitem{chen_nollet} D. Chen, S. Nollet.
\emph{Detaching embedded points}. Algebra Number Theory 6 (2012), no. 4, 731-756.

\bibitem{cc1} J. Choi, K. Chung.
{\em Moduli spaces of $\alpha$-stable pairs and wall-crossing on $\mathbb{P}^2$}.
arXiv:1210.2499. To appear J. Math. Soc. Jap.

\bibitem{choi_chung_estimates} J. Choi, K. Chung.
\emph{Cohomology bounds for sheaves of dimension one}.  Internat. J. Math., {\bf 25} (2014), no. 11, 1450103

\bibitem{cc2}  J. Choi, K. Chung. \emph{The geometry of the moduli space of one-dimensional sheaves}. Sci. China Math., {\bf 58} (2015) no.3, 3487-500.

\bibitem{cm} J. Choi, M. Maican.
{\em Torus action on the moduli spaces of plane sheaves}.
J. Geom. Phys. {\bf 83} (2014), 18--35.

\bibitem{CK11} K. Chung, Y.-H. Kiem.
{\em Hilbert scheme of rational cubic curves via stable maps.}
Amer. J. Math. {\bf 133} (2011), no. 3, 797--834.

\bibitem{CHK12}
K. Chung, J. Hong, Y.-H. Kiem.
\emph{Compactified moduli spaces of rational curves in projective homogeneous varieties}.
J. Math. Soc. Japan \textbf{64} (2012), no. 4, 1211--1248. \MR{2998922}

\bibitem{drezet_maican} J.-M. Dr\'ezet, M. Maican.
\emph{On the geometry of the moduli spaces of semi-stable sheaves supported on plane quartics}.
Geom. Dedicata {\bf 152} (2011), 17--49.

\bibitem{freiermuth_trautmann} H.-G. Freiermuth, G. Trautmann.
\emph{On the moduli scheme of stable sheaves supported on cubic space curves}.
Amer. J. Math. {\bf 126} (2004), 363--393.

\bibitem{M2} D. R. Grayson and M. E. Stillman. \emph{Macaulay2}, a software system for research in algebraic geometry. Available at http://www.math.uiuc.edu/Macaulay2/.

\bibitem{FP97}
W. Fulton, R. Pandharipande.
\emph{Notes on stable maps and quantum cohomology}.
Algebraic geometry---Santa Cruz 1995, Proc. Sympos. Pure Math., v. 62, Amer. Math. Soc., Providence, RI, 1997, 45--96.
\MR{1492534 (98m:14025)}

\bibitem{Hartshorne} R. Hartshorne.
{\em Algebraic geometry}.
Graduate Texts in Mathematics, v. 52, Springer-Verlag, 1977.

\bibitem{Har94} R. Hartshorne. {\em The genus of space curves}, Ann. Univ. Ferrara Sez. VII(N.S.), {\bf 40} (1994), 207-223.


\bibitem{mhe} M. He.
{\em Espaces de Modules de syst\`emes coh\'erents}.
Int. J. Math. {\bf 7} (1998), 545--598.

\bibitem{Huy06} D. Huybrechts. {\em Fourier-Mukai transforms in algebraic geometry}. Oxford Mathematical Monographs. Clarendon Press, 2006.

\bibitem{huybrechts_lehn} D. Huybrechts,  M. Lehn.
\emph{The Geometry of Moduli Spaces of Sheaves}.
Aspects of Mathematics E31, Vieweg, Braunschweig, 1997.

\bibitem{iena1} O. Iena.
\emph{Modification of Simpson moduli spaces of $1$-dimensional sheaves by vector bundles. An experimental example}.
Ph.D. Thesis, Technische Universit\"at Kaiserslautern, Germany, 2009.

\bibitem{iena} O. Iena.
\emph{On the singular sheaves in the fine Simpson moduli spaces of 1-dimensional sheaves supported on plane quartics}.
arXiv:1305.2400

\bibitem{lepot} J. Le Potier.
{\em Faisceaux semi-stables de dimension $1$ sur le plan projectif}.
Rev. Roumanine Math. Appl., {\bf 38} (1993), 635--678.

\bibitem{LePotier} J. Le Potier. {\em Systèmes cohérents et structures de niveau.} Asterisque No. 214 (1993).

\bibitem{maican_pacific} M. Maican.
\emph{On two notions of semistability}.
Pacific J. Math. {\bf 234} (2008), 69--135.

\bibitem{maican_duality} M. Maican.
\emph{A duality result for moduli spaces of semistable sheaves supported on projective curves}.
Rend. Semin. Mat. Univ. Padova {\bf 123} (2010), 55--68.

\bibitem{maican_illinois} M. Maican.
\emph{On the moduli spaces of semi-stable plane sheaves of dimension one and multiplicity five}.
Ill. J. Math. {\bf 55} (2011), 1467--1532.

\bibitem{maican6} M. Maican.
\emph{The classification of semistable plane sheaves supported on sextic curves}.
Kyoto J. Math. {\bf 53} (2013), 739-786.
\bibitem{mcc01} J. McCleary. \emph{A User's Guide to Spectral Sequences}, Cambridge Studies in Advanced Mathematics {\bf 58}, 2001.

\bibitem{NS03} S. Nollet and E. Schlesinger. \emph{Hilbert schemes of degree four curves}, Compositio Math. {\bf 139} (2003), 169-196.

\bibitem{PT09} R. Pandharipande and R. P. Thomas. \emph{Curve counting via stable pairs in the derived category}. Invent. math. {\bf 178} (2009), 407-447.

\bibitem{PT} R. Pandharipande and R. P. Thomas.
{\em Stable pairs and BPS invariants}.
J. Amer. Math. Soc. {\bf 23} (2010), 267--297.

\bibitem{Parker} A. E. Parker.
{\em An elementary GIT construction of the moduli space of stable maps}.
Ill. J. Math. {\bf 51} (2007), no. 3, 1003--1025.

\bibitem{shafarevich} I. Shafarevich.
\emph{Basic Algebraic Geometry I}.
Second, Revised and Expanded Edition,
Springer-Verlag, Berlin, Heidelberg, 1994.

\bibitem {Sim94} C. T. Simpson. \emph{Moduli of representations of the fundamental group of a smooth projective variety}. {I}. Inst. Hautes {\'E}tudes Sci. Publ. Math., {\bf 79}. 47-129, 1994.


\bibitem{st} J. Stoppa, R. P. Thomas.
{\em Hilbert schemes and stable pairs: GIT and derived category wall crossings}.
Bulletin de la Soci\'et\'e Math\'ematique de France. {\bf 139} (2011), no.3, 297-339.

\bibitem{Tha} M. Thaddeus {\em Stable pairs, linear systems and the Verlinde formula}. Invent. Math. {\bf 177}(1) (1994), 317--353.


\bibitem{Thomas} R. P. Thomas. {\em A holomorphic Casson invariant for Calabi-Yau 3-folds, and bundles on K3 fibrations}. J. Diff. Geom. 54, no. 2, 367-438, 2000.


\bibitem{Tyo07} I. Tyomkin. \emph{On Severi varieties on Hirzebruch surfaces}, Int. Math. Res. Not. IMRN (2007), no. 23, Art. ID rnm109, 31.

\bibitem{OCS80} C. Okonek, M. Schneider, and H. Spindler \emph{Vector Bundles on Complex Projective Spaces}, Progress in Mathematics 3. Birkh{\"a}user, Boston (1980).

\bibitem{ps} R. Piene, M. Schlessinger.
\emph{On the Hilbert scheme compactification of the space of twisted cubics}.
Amer. J. Math. {\bf 107} (1985), 761--774.

\bibitem{yuan1} Y. Yuan. {\em Moduli spaces of semistable sheaves of dimension $1$ on $\PP^2$}. Pure Appl. Math. Q. {\bf 10} (2014), 723-766.


\end{thebibliography}
\end{document}